\title{Automorphism groups of linearly ordered structures and
  endomorphisms of the ordered set~$\Qset$ of rational numbers}
\author{Jillian~D. McPhee, James~D. Mitchell \& Martyn Quick\\[8pt]
  Mathematical Institute, University of St Andrews, North Haugh,\\
  St Andrews, Fife, KY16 9SS, United Kingdom}
\newcommand{\1}{\mathbf{1}}
\newcommand{\C}{\mathscr{C}}
\newcommand{\card}[1]{\mathopen{|}#1\mathclose{|}}
\newcommand{\nbd}{\nobreakdash-}
\newcommand{\N}{\mathbb{N}}
\newcommand{\oset}[1]{(#1,{\leq})}
\newcommand{\Q}{\mathbb{Q}}
\newcommand{\Qset}{\oset{\Q}}
\newcommand{\R}{\mathbb{R}}
\newcommand{\scD}{\mathscr{D}}
\newcommand{\scH}{\mathscr{H}}
\newcommand{\scL}{\mathscr{L}}
\newcommand{\scR}{\mathscr{R}}
\newcommand{\set}[2]{\left\{\,#1\;\middle|\;#2\,\right\}}
\newcommand{\T}{\mathcal{T}}
\newcommand{\x}{\mathbf{x}}
\newcommand{\y}{\mathbf{y}}
\newcommand{\Z}{\mathbb{Z}}
\DeclareMathOperator{\Aut}{Aut}
\DeclareMathOperator{\End}{End}
\DeclareMathOperator{\im}{im}
\DeclareMathOperator{\Sym}{Sym}
\renewcommand{\emptyset}{\varnothing}
\renewcommand{\epsilon}{\varepsilon}
\renewcommand{\geq}{\geqslant}
\renewcommand{\leq}{\leqslant}
\newcommand{\spc}{\vspace{\baselineskip}}
\renewcommand{\labelenumi}{{\normalfont(\roman{enumi})}}
\newtheorem{THM}{Theorem}
\newtheorem{thm}{Theorem}[section]
\newtheorem{lemma}[thm]{Lemma}
\newtheorem{prop}[thm]{Proposition}
\newtheorem{cor}[thm]{Corollary}
\newcommand{\qed}{\hspace*{\fill}$\square$}
\newenvironment{proof}{%
  \begin{trivlist}\item\textsc{Proof:}}{\qed\end{trivlist}}
\newenvironment{proofof}[1]{%
  \begin{trivlist}\item\textsc{Proof of #1:}}{%
  \qed\end{trivlist}}
\begin{document}

\maketitle

\begin{abstract}
  We investigate the structure of the monoid of endomorphisms of the
  ordered set~$\Qset$ of rational numbers.  We show that for any
  countable linearly ordered set~$\Omega$, there are uncountably many
  maximal subgroups of~$\End\Qset$ isomorphic to the automorphism
  group of~$\Omega$.  We characterise those subsets~$X$ of~$\Q$ that
  arise as a retract in~$\Qset$ in terms of topological information
  concerning~$X$.  Finally, we establish that a countable group arises
  as the automorphism group of a countable linearly ordered set, and
  hence as a maximal subgroup of~$\End\Qset$, if and only if it is
  free abelian of finite rank.
\end{abstract}

\section{Introduction}

The linearly ordered set~$\Qset$ of rational numbers has been observed
to have a number of interesting properties.  From the model theory
point of view, $\Qset$~is the Fra\"{\i}ss\'{e} limit of the class of
finite linearly ordered sets.  In addition, the automorphism group
of~$\Qset$ is highly homogeneous as a permutation group on~$\Q$ and it
is oligomorphic (see, for example,~\cite[Section~9.3]{BMMN}).  In this
paper, we present an investigation into the structure of the
endomorphism monoid of~$\Qset$.  This continues our work begun
in~\cite{Jay1}, where we examined the endomorphism monoid of the
random graph and other related relational structures that arise as
Fra\"{\i}ss\'{e} limits of various types of finite graph.  The content
of this paper then presents the beginnings of a counterpart within the
study of the monoid of endomorphisms of~$\Qset$ to the literature on
its automorphism group.

Our main result (Theorem~\ref{thm:main} below) is to describe the
groups that arise as maximal subgroups of the endomorphism monoid
of~$\Qset$.  The identity element in each such maximal subgroup~$G$ is
an idempotent endomorphism~$f$ of~$\Qset$ and the group~$G$ is then
equal to the group $\scH$\nbd class of~$f$.  We identify the
group~$G$, up to isomorphism, from the structure of the image of~$f$
(see Lemma~\ref{lem:H-class} below).  It is therefore important to
identify which linearly ordered sets arise as \emph{retracts} (that
is, the image of an idempotent endomorphism) in~$\Qset$ and, within
the first author's thesis~\cite{JayThesis}, the original proof of
Theorem~\ref{thm:main} relied upon a precise description of such
retracts as subsets of~$\Q$.  Recently, Kubi\'{s}~\cite{Kubis} has
studied retracts of Fra\"{\i}ss\'{e} limits in wider generality and he
establishes the following theorem.

\begin{thm}[Kubi\'{s}, {\protect\cite[Corollary~3.24]{Kubis}}]
  \label{thm:Kubis}
  Every countable linearly ordered set is order-isomorphic to an
  (increasing) retract of the set of rational numbers.
\end{thm}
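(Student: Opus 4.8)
The plan is to combine the universality of $\Qset$ with an explicit analysis of which subsets of $\Q$ can serve as images of idempotent order-preserving maps. First recall Cantor's theorem: $\Qset$ is universal for countable linear orders, so $\Omega$ order-embeds into $\Qset$ with some image $X\subseteq\Q$. An (increasing) retract is, by definition, the image of an idempotent endomorphism, and since every element of $\End\Qset$ is order-preserving, I read ``increasing'' simply as the assertion that the retraction map is monotone. Thus the goal becomes: realise $\Omega$ as a subset $X\subseteq\Q$ for which there exists an order-preserving $r\colon\Q\to\Q$ with $\im r=X$ and $r|_X=\mathrm{id}$. To see which $X$ qualify, fix $q\in\Q\setminus X$ and set $a=\sup\set{x\in X}{x<q}$ and $b=\inf\set{x\in X}{x>q}$ (in $\R\cup\{\pm\infty\}$). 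A short monotonicity argument shows $q$ can be sent into $X$ only if $a\in X$ or $b\in X$: if $a<r(q)\in X$ but $a\notin X$ there is $x\in X$ with $r(q)<x<q$, forcing $r(x)=x>r(q)$ while $x<q$ demands $r(x)\leq r(q)$, a contradiction, and symmetrically for $b$. Conversely, mapping each gap to an attained endpoint works. Hence $X$ \emph{is} a retract if and only if every maximal $X$-free interval that contains a rational has a finite endpoint lying in $X$.

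The substantive step is therefore to \emph{choose the embedding} so that this criterion holds. The obstruction to avoid is a ``doubly-open'' gap: a maximal $X$-free interval whose two endpoints are both unattained limits of $X$; for example $\set{x\in\Q}{x<\sqrt2}\cup\set{x\in\Q}{x>\sqrt3}$ traps the rationals of $(\sqrt2,\sqrt3)$ and is not a retract. Such gaps arise precisely from Dedekind gaps of $\Omega$, that is, cuts with no maximum below and no minimum above, when the images of the two sides fail to meet or meet at a trapped rational. My strategy is to build the embedding recursively so that every internal Dedekind gap of $\Omega$ is \emph{pinched}: the images of its lower and upper parts are steered to a common \emph{irrational} real, leaving no rational trapped between them; and, when $\Omega$ has no least (respectively greatest) element, to make $X$ coinitial (respectively cofinal) in $\Q$, so that the unbounded outer region contains no trapped rationals either. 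I stress that one cannot instead demand that $X$ be closed in $\R$: that would wrongly exclude dense orders such as $\Omega=\Q$ itself, which is a retract trivially since it has no gaps at all.

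Concretely I would enumerate $\Omega$ and place its images one at a time, each as a rational in the unique open slot determined by the finitely many order relations decided so far, running a back-and-forth bookkeeping over the currently adjacent image pairs: each developing two-sided gap is squeezed toward a preassigned irrational target (outer gaps toward $\pm\infty$), while at every stage I stand ready to insert a new image whenever $\Omega$ turns out to have an element there, which renders that boundary attained. In the limit, attained limits correspond to genuine points of $X$, and every unattained two-sided limit has been pinched to an irrational with $X$ dense on both sides, so no maximal $X$-free interval has both endpoints unattained and the criterion above is met. It then remains to exhibit the retraction: put $r(q)=q$ for $q\in X$ and, for $q\notin X$, put $r(q)=\min\set{x\in X}{x>q}$ when this minimum exists and $r(q)=\max\set{x\in X}{x<q}$ otherwise (which then exists); since each maximal $X$-free interval is sent to a single attained endpoint, both idempotency and monotonicity of $r$ reduce to a routine check. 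The hard part is entirely the recursive limit-control in the middle step: guaranteeing that each unattained cut is pinched to an irrational and that nothing is trapped, the characterisation and the final retraction being straightforward once a suitable embedding is in hand.
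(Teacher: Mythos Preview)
The paper does not prove this theorem; it is quoted from Kubi\'{s}~\cite{Kubis} and used as a black box throughout Section~\ref{sec:EndQ}, so there is no proof here to compare against.

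On its own merits: your characterisation of which subsets $X\subseteq\Q$ occur as retracts is correct and is exactly Theorem~\ref{thm:image} of the paper (Theorem~B), and your argument for it is sound. The difficulty lies entirely where you locate it, in the recursive embedding. Note, however, that a countable linear order can have $2^{\aleph_{0}}$ Dedekind gaps (already $\Omega=\Q$ does), so the gaps cannot be enumerated and handled individually; moreover a ``slot'' visible at stage~$n$ may split repeatedly as later points are inserted, so the phrase ``preassigned irrational target'' needs a mechanism for propagating or reassigning targets through such splits. This can be done (for instance via a tree of nested intervals with irrational intersection), but your sketch does not yet pin it down, and as written this is a genuine gap rather than a routine detail.

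There is a much shorter argument that sidesteps all of this. Form the lexicographic product $\Omega\times\Q$. It is countable; it has no endpoints since $(\omega,q-1)<(\omega,q)<(\omega,q+1)$; and it is dense, since between $(\omega,q)<(\omega,q')$ lies $(\omega,\tfrac{q+q'}{2})$, while between $(\omega,q)<(\omega',q')$ with $\omega<\omega'$ lies $(\omega,q+1)$. Hence $\Omega\times\Q\cong\Qset$ by Cantor's theorem. The map $r\colon(\omega,q)\mapsto(\omega,0)$ is order-preserving and idempotent with image $\Omega\times\{0\}\cong\Omega$, and transporting $r$ across the isomorphism yields the required idempotent endomorphism of~$\Qset$. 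This avoids both the retract characterisation and the limit-control entirely.
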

(``Increasing'' in this statement merely refers to the fact that
Kubi\'{s} uses the term ``increasing map'' for what we call an
order-preserving map.)  We shall use this result of Kubi\'{s}
throughout Section~\ref{sec:EndQ} and it will enable us to present
considerably shorter proofs than otherwise possible, essentially
because we can always start with some endomorphism of~$\Qset$ with
image order-isomorphic to some particular linearly ordered set.

Our main result is the following where we describe the groups arising
as the maximal subgroups of~$\End\Qset$.  There are some obvious
restrictions on such groups: they must act as automorphisms on the
image of the corresponding retract; i.e., they must be an automorphism
group of some linearly ordered set.  In the theorem, we observe that
this is the only restriction.  Moreover, we also show that each group
occurs uncountably many times as a maximal subgroup of~$\End\Qset$.
(Note that the trivial group occurs as the group $\scH$\nbd class of
many idempotent endomorphisms including all those with finite image.)

Green's relations are used to describe the structure of a semigroup
and the theorem is expressed in terms of these relations.  We describe
them in more detail in Section~\ref{sec:prelims} below.  The
$\scD$\nbd relation is the coarsest of those that we consider, while
$\scD$\nbd classes are refined into $\scL$- and $\scR$\nbd classes.
Finally $\scH = \scL \cap \scR$.  Maximal subgroups of a monoid are
found as the \emph{group $\scH$\nbd classes} within \emph{regular}
$\scD$\nbd classes and the idempotent elements within these classes
play the role of the identity element in each maximal subgroup.

\begin{THM}
  \label{thm:main}
  \begin{enumerate}
  \item Let $\Omega$~be a countable linearly ordered set.  Then there
    exist $2^{\aleph_{0}}$~distinct regular $\scD$\nbd classes
    of\/~$\End\Qset$ whose group $\scH$\nbd classes are isomorphic
    to~$\Aut\Omega$.
  \item There is one countable regular $\scD$\nbd class~$D_{0}$
    of\/~$\End\Qset$.  This~$D_{0}$ consists of the (idempotent)
    endomorphisms with image of cardinality~$1$ and every $\scH$\nbd
    class in~$D_{0}$ is a group $\scH$\nbd class isomorphic to the
    trivial group.

    All other regular $\scD$\nbd class of\/~$\End\Qset$ contain
    $2^{\aleph_{0}}$~distinct group $\scH$\nbd classes.
  \end{enumerate}
\end{THM}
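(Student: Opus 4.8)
The plan is to run everything through three ingredients: Lemma~\ref{lem:H-class}, which says the group $\scH$\nbd class of an idempotent $f$ is $\Aut(\im f)$; Theorem~\ref{thm:Kubis}, which realises every countable linear order as a retract of $\Qset$; and a description of Green's relations on the idempotents of $\End\Qset$. For the last I would first record that, for idempotents $e,f$, one has $e\mathrel{\scR}f$ exactly when $\im e=\im f$ and $e\mathrel{\scL}f$ exactly when $e,f$ have the same kernel (with the paper's composition convention; the opposite convention merely interchanges $\scR$ and $\scL$). Consequently $e\mathrel{\scD}f$ forces $\im e\cong\im f$: choosing $g$ with $e\mathrel{\scR}g\mathrel{\scL}f$ gives $\im e=\im g\cong\Qset/{\ker g}=\Qset/{\ker f}\cong\im f$. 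Thus non-isomorphic retracts always lie in distinct $\scD$\nbd classes, while idempotents with isomorphic images share a $\scD$\nbd class precisely when the order-isomorphism between the images, and its inverse, both extend to endomorphisms of $\Qset$.

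For part~(i), the existence of one regular $\scD$\nbd class with group $\scH$\nbd class $\cong\Aut\Omega$ is immediate: Theorem~\ref{thm:Kubis} supplies an idempotent $f$ with $\im f\cong\Omega$, and Lemma~\ref{lem:H-class} gives $\Aut\Omega$ as its group $\scH$\nbd class. For the multiplicity I would produce $2^{\aleph_0}$ pairwise non-isomorphic countable linear orders, each with automorphism group isomorphic to $\Aut\Omega$; realising each as a retract and using the reduction above then yields $2^{\aleph_0}$ distinct regular $\scD$\nbd classes, all with group $\scH$\nbd class $\cong\Aut\Omega$. Such a family is obtained by replacing every point of $\Omega$ with copies of a single rigid block possessing endpoints, where the block is drawn from a family of $2^{\aleph_0}$ pairwise non-isomorphic rigid countable orders: the blocks are then permuted only according to $\Aut\Omega$, so the automorphism group is unchanged, while the block type records the choice.

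For part~(ii) I would first treat $D_{0}$ directly: the idempotents of image-size~$1$ are the constant maps $c_{q}\colon x\mapsto q$ ($q\in\Q$), and since they all share the (universal) kernel they are $\scL$\nbd related, hence form a single, countable $\scD$\nbd class in which each map is alone in its $\scR$\nbd class and so in a trivial group $\scH$\nbd class. For any other regular $\scD$\nbd class $D$ I would fix an idempotent $e\in D$ with $\card{\im e}\geq 2$ and exhibit $2^{\aleph_0}$ idempotents of $D$ in pairwise distinct $\scH$\nbd classes. If $\im e$ has two consecutive points $a<b$, the rationals of $(a,b)$ may be collapsed to $a$ or to $b$ according to an arbitrary cut; letting the cut range over its $2^{\aleph_0}$ positions yields idempotents with a common image (hence the same $\scR$\nbd class, so all in $D$) but pairwise distinct kernels. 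If instead $\im e$ is dense, I would slide a one-sided gap of the retract to each of $2^{\aleph_0}$ real positions, producing $2^{\aleph_0}$ distinct images that are order-isomorphic to $\im e$ by isomorphisms extending (with their inverses) to $\Qset$, hence $\scD$\nbd equivalent to $e$ but in distinct $\scR$\nbd classes. As every $\scR$\nbd and every $\scL$\nbd class of a regular $\scD$\nbd class contains an idempotent, and each group $\scH$\nbd class contains exactly one, either construction gives $D$ the required $2^{\aleph_0}$ group $\scH$\nbd classes.

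The crux throughout is the $\scD$\nbd relation, that is, deciding when an isomorphism between two retracts and its inverse both extend to $\Qset$; this is the topological content signalled in the introduction and is what makes the counting work. The subtlest point is the dense case of part~(ii): one must check that a dense retract, and in particular $\Qset$ itself realised as a \emph{proper} retract such as $\set{x\in\Q}{x\leq a}\cup\set{x\in\Q}{x>b}$ with $b$ irrational, genuinely carries a $2^{\aleph_0}$\nbd family of $\scD$\nbd equivalent but $\scH$\nbd inequivalent idempotents, so that even the $\scD$\nbd class of the identity (the group of units $\Aut\Qset$) contributes $2^{\aleph_0}$ group $\scH$\nbd classes and $D_{0}$ is the only exception. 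The rigid-gadget construction of part~(i), together with verifying that it leaves the automorphism group equal to $\Aut\Omega$, is the remaining routine-but-delicate ingredient.
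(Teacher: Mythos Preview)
Your overall strategy---Kubi\'{s}'s theorem plus Lemma~\ref{lem:H-class} plus a supply of rigid gadgets---is exactly the paper's, and your outline for~$D_{0}$ is correct (modulo the $\scL$/$\scR$ swap: under the paper's right-action convention, same image gives $\scL$ and same kernel gives $\scR$; see Lemma~\ref{lem:reg-classes}).  However, two points deserve correction.  First, your $\scD$\nbd criterion is too cautious: for \emph{regular} elements (in particular idempotents) Lemma~\ref{lem:reg-classes}(iii) already gives ``$\scD$\nbd related $\iff$ images order-isomorphic'', with no side condition about extending the isomorphism or its inverse to~$\Qset$.  This collapses much of the ``crux'' you flag at the end and in particular makes the dense subcase of part~(ii) far less delicate than you suggest.

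Second, and more substantively, the lexicographic construction you propose for part~(i)---replacing each point of~$\Omega$ by a fixed rigid block~$B$ with endpoints---does \emph{not} automatically yield $\Aut(\Omega\times_{\mathrm{lex}}B)\cong\Aut\Omega$.  When $\Omega$ has consecutive points, adjacent copies of~$B$ abut, and without further hypotheses on~$B$ (for instance that its minimum has no immediate successor and its maximum no immediate predecessor, and that~$B$ has no internal consecutive pairs) there is no obvious invariant distinguishing the ``seams'' between blocks from internal structure, so an automorphism need not respect the block decomposition.  The paper sidesteps this entirely by using the concatenation~$\Omega+\C_{\x}$ rather than a pointwise replacement: here $\C_{\x}$ is a carefully built rigid order (each rational~$x_{n}$ replaced by a chain of length~$n$), and Proposition~\ref{prop:X+C} verifies directly that every automorphism of~$\Omega+\C_{\x}$ preserves the two pieces, using the special structure of~$\C_{\x}$ established in Lemmas~\ref{lem:Cx-basic}--\ref{lem:Cx-map}.

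For part~(ii) the paper also avoids your consecutive/dense case split: Theorem~\ref{thm:idempotents}(ii) gives, for any~$\Omega$ with $\card{\Omega}>1$, a uniform construction of $2^{\aleph_{0}}$ idempotents with image isomorphic to~$\Omega$, by conjugating a fixed idempotent~$f$ by suitable pairs~$\xi,\eta$ with $\eta\xi=\mathrm{id}$ so as to vary a single kernel class.  Since distinct idempotents lie in distinct $\scH$\nbd classes and Lemma~\ref{lem:reg-classes}(iii) puts them all in the same $\scD$\nbd class, this finishes~(ii) without any appeal to gaps or density.  Your consecutive-points argument is correct and pleasantly concrete, but your dense case (``slide a one-sided gap'') is underspecified and, as noted, unnecessary once Lemma~\ref{lem:reg-classes}(iii) is used at full strength.
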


The strategy for proving Theorem~\ref{thm:main} is quite similar to
the corresponding theorems for Fra\"{\i}ss\'{e} limits of graphs
in~\cite{Jay1}.  We shall construct uncountably many linearly ordered
sets~$\C_{\x}$ with trivial automorphism group that we ``attach'' to
the linearly ordered set~$\Omega$ in such a way that the result has
automorphism group isomorphic to that of~$\Omega$.  There are two
differences to note.  The first is that the construction of the
ordered set~$\C_{\x}$ is relatively delicate, whereas the
graph~$L_{\Sigma}$ with trivial automorphism group in~\cite{Jay1} is
quite easy to build.  On the other hand, having constructed~$\C_{\x}$,
the use of Kubi\'{s}'s result makes it now straightforward to find an
idempotent endomorphism with specified image.

We shall also establish further structural information concerning the
endomorphism monoid of~$\Qset$ and its elements, as follows:
\begin{itemize}
\item We determine the number of $\scR$- and $\scL$\nbd classes in
  each $\scD$\nbd class and observe that this depends upon the
  cardinality of the image of any endomorphism within the $\scD$\nbd
  class (Theorems~\ref{thm:R-classes} and~\ref{thm:L-classes}).
\item If an endomorphism has finite image then it is a regular element
  of~$\End\Qset$ \ (Proposition~\ref{prop:finite->regular}), while
  non-regular endomorphisms can be constructed with certain types of
  infinite image (Theorem~\ref{thm:nonregular}) from which it follows
  there are uncountably many non-regular $\scD$\nbd classes
  (Corollary~\ref{cor:uc-nonreg}).
\end{itemize}
Note that the first of these sets of observations applies to all
$\scD$\nbd classes in~$\End\Qset$, which stands in contrast to our
work in~\cite{Jay1}, where we established similar results for all
\emph{regular} $\scD$\nbd classes in the endomorphism monoid of the
random graph and constructed \emph{some} examples of non-regular
$\scD$\nbd classes with uncountably many $\scR$- and $\scL$\nbd
classes.

\spc

Kubi\'{s}'s result states that every countable linear ordered set is
isomorphic to the image of some idempotent endomorphism of~$\Qset$.
There is still the question of which subsets of~$\Q$ actually arise as
the image of an idempotent endomorphism.  This is the content of the
second theorem we state in this introduction, where we characterise
precisely which subsets are the images of idempotent endomorphisms.
This theorem was used in the original proof of Theorem~\ref{thm:main}
as it appeared in~\cite{JayThesis}.  Although no longer needed for the
proofs in Section~\ref{sec:EndQ}, this characterisation still seems to
be of interest, particularly in the context of Theorem~\ref{thm:Kubis}
quoted above from~\cite{Kubis}.

To state the theorem, we need the following concept.  If $A$~is a
subset of~$\Q$, define a symmetric relation~$\sim$ on~$A$ as follows.
If $a,b \in A$ with $a \leq b$, define $a \sim b$ if $c \in A$ for
all~$c$ satisfying $a \leq c \leq b$.  It is straightforward to verify
that $\sim$~is an equivalence relation on~$A$.  We shall call the
equivalence classes of~$A$ under this relation the \emph{maximal
  intervals} of~$A$.  Indeed, it follows from the definition that
every equivalence class of~$\sim$ is an interval (as defined in
Section~\ref{sec:prelims} below) contained within~$A$ and if $J$~is
any interval contained in~$A$ then $a \sim b$ for all~$a,b \in J$ and
hence $J$~is contained within one of the equivalence classes.

\begin{THM}
  \label{thm:image}
  Let $X$~be a subset of~$\Q$.  Then there is an idempotent
  endomorphism of~$\Qset$ with image equal to~$X$ if and only if no
  maximal interval within~$\Q \setminus X$ is closed (in the topology
  on~$\Q$ induced from the Euclidean topology on~$\R$).
\end{THM}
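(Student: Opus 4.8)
The plan is to work directly with the observation that an idempotent endomorphism~$f$ of~$\Qset$ with image~$X$ is exactly an order-preserving \emph{retraction} of~$\Q$ onto~$X$: idempotency forces $f$ to fix $X$ pointwise, while $\im f = X$ forces $f$ to map the complement $Y = \Q \setminus X$ into~$X$. First I would recast the topological hypothesis in purely order-theoretic terms. Writing $a = \inf J$ and $b = \sup J$ (in $\R \cup \{\pm\infty\}$) for a maximal interval~$J$ of~$Y$, one checks that $J$ is closed in~$\Q$ precisely when $J = [a,b] \cap \Q$; equivalently, whenever a finite endpoint is rational it already belongs to~$J$. A short remark I would record alongside this is that if $a \in \Q$ with $a \notin J$ then necessarily $a \in X$ (otherwise $a$ would be $\sim$\nbd related to the elements of~$J$ immediately above it, contradicting maximality), and symmetrically for~$b$. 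Thus ``$J$ is not closed'' is equivalent to ``$J$ has a rational endpoint lying in~$X$''.

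For the backward implication (no maximal interval closed) I would build~$f$ by collapsing each maximal interval onto an adjacent fixed point. For each maximal interval~$J$ of~$Y$ choose a rational endpoint $e_J \in \{a,b\}$ of~$J$ lying in~$X$ (available since $J$ is not closed), set $f$ equal to the identity on~$X$, and set $f(j) = e_J$ for every $j \in J$. That $\im f = X$ and that $f$ is idempotent are immediate. Order-preservation I would verify by a routine case analysis on two points $p < q$: the only configurations to watch are a fixed point together with a collapsed interval, or two distinct collapsed intervals, and in each case the required inequality follows because any element of~$X$ lying below~$J$ is $\leq a$ and any element lying above is $\geq b$.

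For the forward implication I would argue by contraposition: assuming some maximal interval~$J$ is closed, I suppose an idempotent~$f$ with image~$X$ exists and derive a contradiction. Since $f$ fixes~$X$ and preserves order, for each $j \in J$ every $x \in X$ with $x < j$ satisfies $x \leq f(j)$ and every $x \in X$ with $x > j$ satisfies $f(j) \leq x$; as $f(j) \in X$, this traps $f(j)$ inside~$X$ between $a$ and~$b$. But $(a,b) \cap \Q \subseteq J \subseteq Y$, so $X \cap (a,b) = \emptyset$, the desired contradiction --- \emph{provided} the trapping inequalities are strict at each finite endpoint.

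The crux, and the step I expect to be the main obstacle, is exactly this strictness, where closedness is genuinely used. I would establish the accumulation lemma: if $a = \inf J$ is finite and $J$ is closed at the bottom (so $a$ is irrational or $a \in J \cap \Q$), then $X$ contains points arbitrarily close to~$a$ from below. The argument is that for any rational $q < a$, were $(q,a) \cap \Q$ disjoint from~$X$ then, together with $[a,a') \cap \Q \subseteq Y$ for $a' \in J$, the point~$q$ would be $\sim$\nbd related to elements of~$J$, forcing $q \in J$ and contradicting $q < \inf J$. Hence $\sup\bigl(X \cap (-\infty,a)\bigr) = a$, which forces $f(j) \geq a$ and therefore $f(j) > a$ (as $a \notin X$); the symmetric statement at~$b$ gives $f(j) < b$, completing the trap. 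The unbounded cases $a = -\infty$ or $b = +\infty$ are degenerate: there $J$ is unbounded on that side, so $X$ contains no point on that side, and a one-sided version of the same trap applies.
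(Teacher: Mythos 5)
Your proof is correct, and while your sufficiency direction coincides with the paper's, your necessity direction takes a genuinely different route. For sufficiency, both you and the paper first observe that a non-closed maximal interval of $\Q \setminus X$ must have a rational endpoint lying in $X$, and then collapse each such interval onto such an endpoint, checking order-preservation by the same case analysis on pairs of points; this part is essentially identical. For necessity, the paper argues directly: given an idempotent $f$, it attaches to each kernel class $xf^{-1}$ with $\card{xf^{-1}} > 1$ two intervals $L_{x}$ and $U_{x}$ via a four-way case analysis on $\inf(xf^{-1})$ and $\sup(xf^{-1})$, and proves these are non-closed, are maximal intervals in $\Q \setminus \im f$, and exhaust the complement. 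You instead argue by contraposition: a closed maximal interval $J$ with endpoints $a,b$ traps the image $f(j)$ of any $j \in J$ inside $(a,b) \cap X$, which is empty, the decisive input being your accumulation lemma that $X$ clusters at any closed finite endpoint of $J$. Your argument is shorter and isolates the single obstruction cleanly; the paper's longer computation buys an explicit structural description of $\Q \setminus \im f$ in terms of the kernel classes of $f$, which has independent interest beyond the theorem itself. One small slip to repair: in your accumulation lemma the point $q$ itself is not known to lie in $\Q \setminus X$, so it is the rationals strictly between $q$ and $a$ (rather than $q$) that would be $\sim$\nbd related to points of $J$ and hence forced into $J$; the conclusion $\sup\bigl(X \cap (-\infty,a)\bigr) = a$, and with it your trapping argument, is unaffected.
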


Finally, we consider the question of which groups can arise as the
automorphism group of a countable linearly ordered set; that is, which
groups can arise as the maximal subgroup of~$\End\Qset$.  In the case
of graphs, it is known by Frucht's Theorem~\cite{Frucht}, together
with the extension to infinite groups by de Groot~\cite{deGroot} and
Sabidussi~\cite{Sabidussi}, that every countable group arises as the
automorphism group of a countable graph.  In the case of linearly
ordered structures, it is easily determined from the definition that
there can be no non-trivial elements of finite order within the
automorphism group of a linearly ordered set.  On the other hand, such
automorphism groups can, of course, be rather complicated.  As an
example, $\Aut\Qset$~is an uncountable non-abelian group.  Although we
do not find a characterisation of all such groups (and it is unlikely
that one exists), we do show that the structure of a \emph{countable}
group that arises as the automorphism group of a countable linearly
ordered set is considerably constrained as the following theorem
indicates.

\begin{THM}
  \label{thm:countableautoms}
  Let $(\Omega,{\leq})$~be a countable totally ordered set and assume
  that $\Aut(\Omega,{\leq})$~is countable.  Then
  $\Aut(\Omega,{\leq})$~is a free abelian group of finite rank.
\end{THM}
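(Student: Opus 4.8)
The plan is to exploit the \emph{finite condensation} of the chain. Write $G=\Aut(\Omega,{\leq})$. First I would dispose of torsion: if $g\in G$ satisfies $g(x)>x$ for some~$x$, then $g^{n}(x)$ is strictly increasing in~$n$, so $g^{n}\neq 1$ for all $n\geq 1$; the same argument applies when $g(x)<x$. Hence $G$~is torsion-free, which already forbids non-trivial finite order and is consistent with the target group $\Z^{n}$. Next I would introduce the relation $a\approx b$ on~$\Omega$, meaning that only finitely many elements lie between $a$ and~$b$ (a variant on the maximal-interval relation of Theorem~\ref{thm:image}). Its classes are convex \emph{blocks}, and because any two elements of a block have only finitely many elements between them, each block is order-isomorphic to an interval of~$\Z$: a finite chain, a copy of~$\N$, a copy of~$-\N$, or a copy of~$\Z$. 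Crucially, only the blocks isomorphic to~$\Z$ have a non-trivial automorphism group (namely $\Z$ itself, by shifts), since the unique order-automorphism of a finite chain, of~$\N$, or of~$-\N$ is the identity.

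The point at which countability is forced to bite is the kernel of the induced action. Since every automorphism of~$(\Omega,{\leq})$ preserves $\approx$, there is a homomorphism $\rho\colon G\to\Aut(\Omega/{\approx})$ onto the condensation, and I would identify its kernel $K$ as the set of automorphisms fixing every block setwise. An element of~$K$ restricts to the identity on each finite, $\N$\nbd, or $-\N$\nbd block and to a shift on each $\Z$\nbd block; conversely, because $G$ is the \emph{full} automorphism group, any independent choice of shifts on the $\Z$\nbd blocks (with the identity elsewhere) glues to a genuine order-automorphism. Thus $K$ is isomorphic to the full direct product of copies of~$\Z$, one for each $\Z$\nbd block. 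Here is the key observation: such a product is uncountable as soon as there are infinitely many $\Z$\nbd blocks. Since $G$ is assumed countable, $\Omega$ can contain only finitely many $\Z$\nbd blocks, and therefore $K\cong\Z^{m}$ for some finite~$m$.

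It remains to control the quotient $\rho(G)\cong G/K$, which is a countable group of order-automorphisms of the condensation $\Omega/{\approx}$. I would iterate the condensation, at each stage peeling off a free abelian kernel of finite rank by the argument above and passing to the next condensation, and then assemble $G$ from these pieces. The endgame is the structure theorem for abelian groups: a torsion-free abelian group that is finitely generated is free of finite rank, so once commutativity and finite generation are in hand the conclusion follows. Commutativity should come from the fact that the surviving automorphisms act essentially by commuting shifts (on disjoint supports, and on the rigid pieces of the condensation the only order-preserving permutations are trivial or shifts), so that $G$ is built from finitely many mutually commuting infinite-cyclic pieces.

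The main obstacle, and the real content of the theorem, is converting global countability into the rigidity needed for finiteness at every level of the condensation tower. Three phenomena must be excluded, all of which would otherwise produce either $2^{\aleph_{0}}$ automorphisms or a non-finitely-generated group: infinitely many independently shiftable $\Z$\nbd blocks accumulating across the iterated condensations (ruled out at the ground level by the product computation, but requiring a uniform bound over the whole tower); a dense, homogeneously interleaved orbit --- morally a copy of~$\Qset$ inside some condensation on which $G$ acts transitively --- which would reproduce the uncountability of $\Aut\Qset$; and \emph{divisible} behaviour, where an automorphism admits $n$\nbd th roots for all~$n$, yielding a copy of~$(\Q,+)$ rather than a finitely generated group. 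Showing that the hypothesis ``$\Aut(\Omega,{\leq})$ is countable'' forbids all three --- so that the condensation tower has only finitely many non-rigid levels and the total rank is finite --- is where the effort of the proof will concentrate.
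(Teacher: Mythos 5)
Your first-level analysis is correct, and it runs parallel to the paper's basic counting trick: the kernel $K$ of the action of $G=\Aut\Omega$ on the finite condensation $\Omega/{\approx}$ is a \emph{full} direct product of copies of $\Z$, one per $\Z$\nbd block, so countability of $G$ forces finitely many $\Z$\nbd blocks and $K\cong\Z^{m}$ (compare Lemma~\ref{lem:orbital}(iv), where the same ``independent supports give $2^{\aleph_{0}}$ automorphisms'' argument appears). The genuine gap is exactly at the iteration. The full-product argument works at level one only because $G$ is the \emph{full} automorphism group of $\Omega$, so that every independent choice of shifts glues to an element of $G$. That hypothesis is not inherited: $\rho(G)\cong G/K$ is merely a countable \emph{subgroup} of $\Aut(\Omega/{\approx})$, and $\Aut(\Omega/{\approx})$ can be uncountable even when $G$ is trivial. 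The paper's own sets $\C_{\x}$ witness this failure: $\Aut\C_{\x}=\1$ (Corollary~\ref{cor:AutCx}), yet the $\approx$\nbd blocks of $\C_{\x}$ are precisely the finite chains $X_{n}$, so its finite condensation is order-isomorphic to $\Qset$, whose automorphism group is uncountable. Hence ``peeling off a free abelian kernel of finite rank by the argument above'' cannot be repeated at the second level, and no analogue of the theorem holds for countable subgroups --- every countable left-orderable group, in particular every countable free group and $(\Q,+)$, embeds in $\Aut\Qset$ --- so the naive induction on the condensation tower has no usable inductive hypothesis. This is not a presentational issue; it is the point where the theorem's actual content lives, and your proposal (as you candidly acknowledge in the last paragraph) defers rather than supplies it.

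The endgame is likewise unproved: torsion-freeness is far too weak (free groups, $(\Q,+)$, and the Heisenberg group are all torsion-free), finite generation is never established, and even knowing $K\cong\Z^{m}$ and $G/K\cong\Z^{k}$ would not finish, since a torsion-free extension of $\Z^{m}$ by $\Z^{k}$ need not be abelian (Heisenberg again). For contrast, the paper sidesteps the condensation entirely: it decomposes $\Omega$ into orbitals $U_{f}(x)$ of single automorphisms, proves the key rigidity statement that infinite orbitals of two automorphisms through a common point coincide (Proposition~\ref{prop:equalorb}), orders $\Aut U$ by $\phi\leq\psi$ iff $x\phi\leq x\psi$ and shows this makes $\Aut U$ Archimedean (Lemma~\ref{lem:Archimedean}), hence by H\"{o}lder's theorem a subgroup of $(\R,+)$, and then eliminates the dense alternative by an explicit $2^{\aleph_{0}}$ construction (the $I$\nbd coloured rationals), leaving $\Aut U\cong\Z$. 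Disjointness of the orbitals then yields $\Aut\Omega\cong\Z^{\card{I}}$ with $I$ finite, so commutativity and finite rank --- the two things your outline leaves open --- fall out of the product decomposition for free.
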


The structure of this paper is as follows.  In
Section~\ref{sec:prelims}, we introduce all the basic notation and
terminology that we require when discussing linearly ordered sets.  We
also recall the semigroup theory that we use, including stating
results from our previous paper~\cite{Jay1} that we depend upon here.
Section~\ref{sec:EndQ} contains information about the structure
of~$\End\Qset$.  We prove Theorem~\ref{thm:main} and establish the
results concerning the $\scL$- and $\scR$\nbd classes in this monoid.
Sections~\ref{sec:image} and~\ref{sec:countable} are devoted to the
proofs of Theorems~\ref{thm:image} and~\ref{thm:countableautoms},
respectively.

\section{Preliminaries and notation}
\label{sec:prelims}

This section is split into two parts.  The first introduces the
terminology that we use relating to relational structures,
particularly as appears in the context of linearly ordered sets.  The
second part of the section is devoted to the semigroup theoretical
concepts that we use, in particular, the results from~\cite{Jay1} that
we depend upon.

\subsection{Relational structures}

A \emph{relational structure} is a pair $\Gamma = (V,\mathcal{E})$
consisting of a non-empty set~$V$ and a sequence $\mathcal{E} =
(E_{i})_{i \in I}$ of relations on~$V$.  All the examples in this
paper of relational structure will involve binary relations~$E_{i}$
and will mostly be \emph{linearly ordered sets}~$\oset{V}$, which is
where $\leq$~is a reflexive, transitive and anti-symmetric relation
on~$V$ such that for every pair $u,v \in V$ either $u \leq v$ or $v
\leq u$.  In view of this, throughout we shall refer only to binary
relations below although some definitions could be made in greater
generality.

If $\Gamma = (V,(E_{i})_{i \in I})$ and $\Delta = (W,(F_{i})_{i \in
  I})$ are relational structures (with relations indexed by the same
set~$I$), a \emph{homomorphism} $f \colon \Gamma \to \Delta$ is a map
$f \colon V \to W$ such that $(uf,vf) \in F_{i}$ whenever $(u,v) \in
E_{i}$.  In the case of linearly ordered sets, we shall also use the
term \emph{order-preserving map} as a synonym for homomorphism.  Thus,
if $\Gamma = \oset{V}$ and $\Delta = \oset{W}$ are linearly ordered
sets, a map $f \colon V \to W$ defines a order-preserving map $\Gamma
\to \Delta$ if $u \leq v$ in~$\Gamma$ implies $uf \leq vf$
in~$\Delta$.  Note here that we are following the convention of
writing maps on the right so that the image of a point~$v \in V$ under
a map is denoted by~$vf$.  The \emph{image} of a order-preserving map
$f \colon \oset{V} \to \oset{W}$ is the linearly ordered
set~$\im f = \oset{Vf}$ induced on the set of image values of~$f$.
The \emph{kernel} of~$f$ is the equivalence relation $\ker f =
\set{(u,v) \in V \times V}{uf = vf}$.  For each image value~$x$
of~$f$, the associated \emph{kernel class} is the preimage $xf^{-1} =
\set{v \in V}{vf = x}$ and this is, of course, one of the equivalence
classes defined by the kernel.

Furthermore, in our context an \emph{order-isomorphism} is an
isomorphism between two ordered sets.  The term \emph{order-embedding}
is used to refer to a map $f \colon \oset{V} \to \oset{W}$ between two
ordered sets such that $v \leq w$ in~$\oset{V}$ if and only if $vf
\leq wf$ in~$\oset{W}$.  It follows from this definition that $f$~is
injective, but one should note that in the case of \emph{linearly
  ordered} sets $\oset{V}$~and~$\oset{W}$ an injective
order-preserving map is always an order-embedding.

A linearly ordered set~$\oset{V}$ will be called \emph{dense} if for
every pair $u,v \in V$ with $u < v$ there exists some $w \in V$
satisfying $u < w < v$.  If $A$~is a subset of~$V$, a \emph{maximum
  element} of~$A$ is some $v \in A$ such that $a \leq v$ for all $a
\in A$.  The concept of \emph{minimum element} is defined dually.
  
If $\Gamma = \oset{V}$~is a linearly ordered set, a subset~$U$ of~$V$
is called \emph{convex} if whenever $u$,~$v$ and~$w$ are points in~$V$
with $u < w < v$ and $u,v \in U$ then necessarily $w \in U$ also.
Note that the kernel class~$xf^{-1}$ of a point in the image of a
order-preserving map $f \colon \oset{V} \to \oset{W}$ is always
convex.  Other examples of convex subsets of~$\Gamma$ include
\emph{intervals}.  We shall borrow the standard notation used for
intervals in the real line for intervals in~$\Gamma$:
\begin{align*}
  [u,v] &= \set{x \in V}{u \leq x \leq v} \\
  (u,v) &= \set{x \in V}{u < x < v}
\end{align*}
with $(u,v]$~and~$[u,v)$ being defined similarly for points $u,v \in
V$ with $u \leq v$.  A further point to observe is that if $A$~and~$B$
are disjoint convex subsets of~$\Gamma$, then it must be the case that
either $a < b$ for all $a \in A$ and~$b \in B$, or that $a > b$ for
all such $a$~and~$b$.  We shall write $A < B$ or $A > B$,
respectively, to indicate these situations.  Similarly, we shall use
the notation $a < B$ as a short-hand to mean that the element~$a$
satisfies $a < b$ for all $b \in B$.

The class of finite linearly ordered sets possesses the hereditary
property, the joint embedding property and the amalgamation property.
This class therefore has a unique Fra\"{\i}ss\'{e}
limit~\cite{Fraisse} (see, for example, \cite[Theorem~6.1.2]{Hodges}).
It is well-known that this Fra\"{\i}ss\'{e} limit is the ordered set
of rational numbers~$\Qset$.  Indeed, this structure is the unique
countable dense linearly ordered set with no maximum or minimum
elements, in the sense that any linearly ordered set satisfying these
properties is order-isomorphic to~$\Qset$.

At certain points, it will be helpful to view the set of ordered
rational numbers~$(\Q,{\leq})$ as a substructure of the \emph{extended
  real numbers}; that is, the relational structure $\R^{\ast} = ( \R
\cup \{ \pm\infty \}, {\leq})$ where $\leq$~denotes the usual order
on~$\R$ together with $-\infty \leq x \leq \infty$ for all $x \in \R$.
We shall then extend the above definitions of intervals to include
non-rational endpoints.  Thus, for example, if $p,q \in \R^{\ast}$, we
will be able to write $(p,q)$ for the interval \emph{in~$\Q$}
consisting of all \emph{rational} numbers~$x$ with $p < x < q$.  Note
by this notation we are always referring to a subset of~$\Q$ but that
the endpoints are permitted to be selected from outside of~$\Q$.  In
an attempt to avoid confusion, we shall never use the interval
notation to refer to subsets of~$\R$.

Note that if $p$~or~$q$ is not rational, then some of the intervals
$(p,q)$, $[p,q]$, $(p,q]$ and~$[p,q)$ coincide.  In addition, our
statement of Theorem~\ref{thm:image} above refers to specific
intervals in~$\Q$ as being closed in the topology induced on~$\Q$ from
the usual topology on~$\R$.  Note that an interval in~$\Q$ is closed
in this induced topology precisely when it \emph{can} be written in
the form~$[p,q]$ where $p,q \in \R \cup \{ \pm\infty \}$.  In
particular, the open interval~$(p,q)$ is closed if and only if $p,q
\in \R \setminus \Q$.  When establishing the theorem in
Section~\ref{sec:image} below, we shall make use of this description
of closed intervals in~$\Q$.

One of the properties of the relational structure~$\R^{\ast}$ is that
a non-empty subset~$A$ possesses a (possibly infinite) infimum and
supremum.  We shall write $\inf A$ and $\sup A$, as usual, to denote
these elements and we shall use these mostly for subsets~$A$ of~$\Q$,
but understand that in many cases  $\inf A$~and~$\sup A$ will be
non-rational elements of~$\R^{\ast}$.  Using these constructions,
though, one quickly observes that if $A$~is a convex subset of~$\Q$,
then $A$~equals one of the intervals $(p,q)$,~$[p,q)$, $(p,q]$
or~$[p,q]$ where $p = \inf A$ and $q = \sup A$ are some points
of~$\R^{\ast}$.

\subsection{Semigroup-theoretical notions}

Let $M = \End\Gamma$ be the endomorphism monoid of a linearly ordered
set $\Gamma = (V,\leq)$.  We say that two elements $f$~and~$g$ of~$M$
are $\scL$\nbd related if $f$~and~$g$ generate the same left ideal
(that is, $Mf = Mg$).  They are $\scR$\nbd related if $fM = gM$.
Green's $\scH$\nbd relation is the intersection of the binary
relations $\scL$~and~$\scR$, and the $\scD$\nbd relation is the
composite~$\scL \circ \scR$ (which is also always an equivalence
relation on~$M$).  The relevance of these relations to the study of
subgroups contained within the endomorphism monoid is that if $e$~is
an idempotent in~$M$ (that is, if $e^{2} = e$) then the $\scH$\nbd
class of~$e$ is a subgroup of~$M$ \ \cite[Corollary~2.2.6]{Howie} and,
morever, the collection of maximal subgroups of~$M$ are precisely the
$\scH$\nbd classes of idempotents of~$M$.  We use the term \emph{group
  $\scH$\nbd class} to refer to such maximal subgroups.

The following lemma is stated in greater generality for relational
structures in our previous paper~\cite{Jay1}.  The first two parts are
inherited from information concerning the $\scL$- and $\scR$\nbd
classes of the full transformation monoid~$\mathcal{T}_{V}$ of all
maps $V \to V$ (see \cite[Exercise~2.6.16]{Howie}).  Our restatement
here is simply interpreting that lemma in the context of linearly
ordered sets.

\begin{lemma}[\protect{\cite[Lemma~2.3]{Jay1}}]
  \label{lem:classes}
  Let $f$~and~$g$ be endomorphisms of the linearly ordered set $\Gamma
  = \oset{V}$.
  \begin{enumerate}
  \item If $f$~and~$g$ are $\scL$\nbd related, then $Vf = Vg$.
  \item If $f$~and~$g$ are $\scR$\nbd related, then $\ker f = \ker g$.
  \item If $f$~and~$g$ are $\scD$\nbd related, then $\im f$~and\/~$\im
    g$ are order-isomorphic.
  \end{enumerate}
\end{lemma}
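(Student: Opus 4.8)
The plan is to obtain parts~(i) and~(ii) as immediate consequences of the known description of Green's relations on the full transformation monoid~$\T_{V}$, and then to derive part~(iii) from these two parts together with the convexity of kernel classes.

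For part~(i), the key observation is that $\scL$\nbd relatedness is inherited upward along the inclusion $M = \End\Gamma \subseteq \T_{V}$. If $f$~and~$g$ are $\scL$\nbd related in~$M$, then $Mf = Mg$ and hence (since $M$~is a monoid and so contains the identity map) there exist $a,b \in M$ with $f = ag$ and $g = bf$. As $a,b \in M \subseteq \T_{V}$, these same factorisations show that $f$~and~$g$ are $\scL$\nbd related in~$\T_{V}$, and the standard description of $\scL$ on the full transformation monoid (see~\cite[Exercise~2.6.16]{Howie}) gives $Vf = Vg$. Part~(ii) is obtained in exactly the same way: $\scR$\nbd relatedness in~$M$ forces $\scR$\nbd relatedness in~$\T_{V}$, whence $\ker f = \ker g$.

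For part~(iii), I would use the definition $\scD = \scL \circ \scR$ to fix an endomorphism~$h \in M$ with $f \mathrel{\scL} h$ and $h \mathrel{\scR} g$. Parts~(i) and~(ii) then yield $Vf = Vh$ and $\ker h = \ker g$; in particular $\im f = \im h$ as ordered sets, so it is enough to produce an order-isomorphism $\im h \to \im g$. The map $vh \mapsto vg$ is a well-defined bijection between these images, since $\ker h = \ker g$ ensures that $vh = wh$ forces $vg = wg$ and conversely. To see that it preserves the order I would factor both $h$~and~$g$ through the common quotient $V/{\ker h} = V/{\ker g}$: the kernel classes are convex subsets of~$V$, so by the comparison of disjoint convex sets recorded earlier any two distinct classes are comparable, and this equips the quotient with a natural linear order. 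Each of $h$~and~$g$, being order-preserving and injective on classes, then induces an order-isomorphism from this quotient onto its image, and composing one with the inverse of the other yields the desired order-isomorphism.

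The first two parts are really bookkeeping once the facts about~$\T_{V}$ are available, so the substantive content lies in part~(iii). The one point requiring care there is that the natural bijection between the two images respects the order; the crux is that convexity of the kernel classes is precisely what allows the quotient $V/{\ker h}$ to be linearly ordered, after which an order-preserving map that is injective on classes promotes each inequality $vh \leq wh$ to the strict inequality $vh < wh$ needed to recognise an order-embedding.
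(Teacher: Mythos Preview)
Your argument is correct and aligns with the paper's treatment: the paper does not give a proof of this lemma but instead cites~\cite[Lemma~2.3]{Jay1}, noting only that parts~(i) and~(ii) are inherited from the description of Green's relations on the full transformation monoid~$\T_{V}$ (via~\cite[Exercise~2.6.16]{Howie}), which is exactly the mechanism you use. Your handling of part~(iii) via the convexity of kernel classes and the induced linear order on the quotient $V/{\ker h}$ supplies the details the paper leaves to the cited reference, and is the natural way to fill that gap.
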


An element~$f$ in the endomorphism monoid~$M$ is said to be
\emph{regular} if there exists $g \in M$ such that $fgf = f$.  An
idempotent endomorphism~$e$ is regular because $e^{3} = e$ and it is
known that if $f$~is regular, then every element in the $\scD$\nbd
class of~$f$ is also regular \cite[Proposition~2.3.1]{Howie}.  As
noted in~\cite{Jay1}, in the case of regular endomorphisms the
implications in Lemma~\ref{lem:classes} reverse.

\begin{lemma}[\protect{\cite[Lemma~2.5]{Jay1}}]
  \label{lem:reg-classes}
  Let $f$~and~$g$ be regular elements in the endomorphism monoid of
  the linearly ordered set $\Gamma = \oset{V}$.  Then
  \begin{enumerate}
  \item $f$~and~$g$ are $\scL$\nbd related if and only if\/ $Vf =
    Vg$.
  \item $f$~and~$g$ are $\scR$\nbd related if and only if\/ $\ker f =
    \ker g$.
  \item $f$~and~$g$ are $\scD$\nbd related if and only if\/ $\im
    f$~and\/~$\im g$ are order-isomorphic.
  \end{enumerate}
\end{lemma}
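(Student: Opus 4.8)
My strategy rests on the observation that the three forward implications are exactly the content of Lemma~\ref{lem:classes}, which holds for arbitrary endomorphisms; so in each part I need only establish the converse, and it is here that regularity enters. Since $f$~is regular I fix some $f' \in M$ with $ff'f = f$, and likewise fix $g'$ with $gg'g = g$; one checks at once that $ff'$, $f'f$, $gg'$ and~$g'g$ are all idempotents of~$M$. Two elementary observations will drive the argument. First, $g'g$ fixes $\im g$ pointwise: if $x = vg \in \im g$ then $x(g'g) = v(gg'g) = vg = x$, and similarly $f'f$ fixes~$\im f$. Second, for every $v \in V$ the point $vgg'$ satisfies $(vgg')g = v(gg'g) = vg$, so that $vgg'$ and~$v$ lie in the same class of~$\ker g$, and dually with $f$ in place of~$g$.

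For part~(i) I assume $Vf = Vg$. Every value $vf$ lies in $\im f = \im g$ and is therefore fixed by~$g'g$, so $f(g'g) = f$ and hence $f = (fg')g \in Mg$. Exchanging the roles of $f$~and~$g$ and using~$f'f$ gives $g \in Mf$, whence $Mf = Mg$ and $f \mathbin{\scL} g$. Part~(ii) is dual. Assuming $\ker f = \ker g$, the second observation gives $(vgg',v) \in \ker g = \ker f$ for every~$v$, so $(vgg')f = vf$; thus $gg'f = f$ and $f = g(g'f) \in gM$. The symmetric computation with $ff'$ yields $g \in fM$, so $fM = gM$ and $f \mathbin{\scR} g$.

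Part~(iii) requires more. I put $e = f'f$ and $\epsilon = g'g$; these are idempotents with $\im e = \im f$ and $\im \epsilon = \im g$ (indeed $f = f(f'f)$ and $f'f \in Mf$ give $f \mathbin{\scL} e$, so $\im e = \im f$ by Lemma~\ref{lem:classes}(i), and similarly for~$\epsilon$). Since $\scL \subseteq \scD$ it is enough to prove $e \mathbin{\scD} \epsilon$. Let $\phi \colon \im f \to \im g$ be the given order-isomorphism. The naive attempt to realise $\phi$ as an element of~$M$ fails because an order-isomorphism between subsets of a general chain~$\oset{V}$ need not extend to an order-preserving self-map of the whole of~$V$; \emph{this is the main obstacle}. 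I sidestep it by composing with the retractions~$e$ and~$\epsilon$, which are the identity on their images: define $s,t \colon V \to V$ by $vs = (ve)\phi$ and $vt = (v\epsilon)\phi^{-1}$. Since $ve \in \im e = \im f$ and $v\epsilon \in \im\epsilon = \im g$ these are well defined, and each is a composite of order-preserving maps, so $s,t \in M$.

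Finally a direct calculation, using that $e$~and~$\epsilon$ fix their images pointwise and that $\phi$,~$\phi^{-1}$ are mutually inverse, gives the four identities $st = e$, $ts = \epsilon$, $es = s$ and $s\epsilon = s$. From $s = es$ and $e = st$ I obtain $sM = eM$, and from $s = s\epsilon$ and $\epsilon = ts$ I obtain $Ms = M\epsilon$; hence $e \mathbin{\scR} s \mathbin{\scL} \epsilon$, so that $(e,\epsilon) \in \scR \circ \scL = \scD$. Combining $e \mathbin{\scD} \epsilon$ with $f \mathbin{\scL} e$ and $g \mathbin{\scL} \epsilon$ yields $f \mathbin{\scD} g$, which completes the converse in part~(iii).
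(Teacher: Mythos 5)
Your proposal is correct, but note that the paper contains no proof to compare it with: Lemma~\ref{lem:reg-classes} is quoted verbatim from \cite[Lemma~2.5]{Jay1}, so what you have written is a self-contained argument supplying what the paper merely cites. Your overall structure is the natural one: the forward implications are indeed Lemma~\ref{lem:classes}, the computations showing $f(g'g)=f$ and $gg'f=f$ in parts (i) and~(ii) are correct, and in part~(iii) you correctly identify the genuine obstacle --- that an order-isomorphism $\phi \colon \im f \to \im g$ between subsets of a chain need not extend to an order-preserving self-map of~$V$ --- and circumvent it by precomposing with the idempotent retractions $e = f'f$ and $\epsilon = g'g$, which do extend $\phi$ to the maps $s$ and~$t$; the four identities $st=e$, $ts=\epsilon$, $es=s$, $s\epsilon=s$ all check out. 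One small point of hygiene: you invoke $\scR \circ \scL = \scD$, whereas the paper defines $\scD = \scL \circ \scR$; the equality of the two composites is standard (see \cite[Proposition~2.1.3]{Howie}), but you could avoid appealing to it entirely, since your map~$t$ satisfies $te = t$, \ $\epsilon t = t$, \ $e = st \in Mt$ and $\epsilon = ts \in tM$, whence $e \mathbin{\scL} t$ and $t \mathbin{\scR} \epsilon$, exhibiting $(e,\epsilon) \in \scL \circ \scR$ in exactly the form the paper's definition requires.
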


The final fact that we require from our earlier work is that we can
identify the group $\scH$\nbd classes in~$\End\Gamma$ as the
automorphism group of the image of endomorphisms within the class.

\begin{lemma}[\protect{\cite[Proposition~2.6(iii)]{Jay1}}]
  \label{lem:H-class}
  Let $f$~be an idempotent endomorphism of the linearly ordered
  set~$\Gamma = \oset{V}$.  Then the group $\scH$\nbd class~$H_{f}$ is
  isomorphic to the automorphism group of the image of~$f$.
\end{lemma}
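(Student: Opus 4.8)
The plan is to realise the required isomorphism by restricting endomorphisms to the image $X = Vf$. First I would record two consequences of $f$ being idempotent: since every point of $X$ has the form $vf$ and $f^{2} = f$, the map $f$ restricts to the identity on~$X$, and consequently each class of~$\ker f$ meets~$X$ in exactly one point (distinct points of~$X$ have distinct images under~$f$ and so lie in distinct kernel classes). I would also note that, as $f$~is idempotent and hence regular, every element of its $\scD$\nbd class --- in particular every element of~$H_{f}$ --- is regular. By Lemma~\ref{lem:classes}, each $g \in H_{f}$ therefore satisfies $Vg = Vf = X$ and $\ker g = \ker f$; the converse implications, supplied by Lemma~\ref{lem:reg-classes} for regular elements, will be invoked in the surjectivity step below.

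Next I would define $\phi \colon H_{f} \to \Aut(\oset{X})$ by sending $g$ to its restriction $g|_{X}$, and check that this is a well-defined homomorphism. For $g \in H_{f}$ the restriction $g|_{X}$ maps~$X$ into $Vg = X$ and is order-preserving as a restriction of an endomorphism; it is moreover a bijection of~$X$. Injectivity holds because two points of~$X$ with the same image lie in one class of $\ker g = \ker f$ and so coincide, while surjectivity holds because any $y \in X$ equals $vg$ for some~$v$, and then the point $vf \in X$ satisfies $(vf)g = vg = y$. Since an order-preserving bijection of a linearly ordered set onto itself is an automorphism (its inverse is again order-preserving, as noted in Section~\ref{sec:prelims}), we get $g|_{X} \in \Aut(\oset{X})$. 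That $\phi$ is a homomorphism is then immediate from the right-hand convention for composition: for $x \in X$ we have $xg \in X$, so $x(gh) = (xg)h$ gives $(gh)|_{X} = (g|_{X})(h|_{X})$.

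Finally I would show that $\phi$ is bijective. For injectivity, suppose $g|_{X} = h|_{X}$; for any $v \in V$ the points $v$~and~$vf$ lie in the same class of $\ker f = \ker g = \ker h$, whence $vg = (vf)g = (vf)h = vh$, so $g = h$. For surjectivity, given $\alpha \in \Aut(\oset{X})$ I would set $g = f\alpha$ (first apply~$f$, landing in~$X$, then apply~$\alpha$), which is a composite of order-preserving maps and hence an endomorphism of~$\Gamma$. One checks $Vg = X\alpha = X$ and, using that $\alpha$~is injective on~$X$, that $\ker g = \ker f$; setting $g' = f\alpha^{-1}$ one computes $gg' = f$ and $fg = g$, so $gg'g = g$ and $g$~is regular. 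Lemma~\ref{lem:reg-classes} then places $g$ in~$H_{f}$, and since $xg = (xf)\alpha = x\alpha$ for $x \in X$ we have $\phi(g) = \alpha$. Thus $\phi$ is an isomorphism and $H_{f} \cong \Aut(\oset{X}) = \Aut(\im f)$.

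The individual verifications above are short; the step that carries the genuine content, and where I would be most careful, is surjectivity --- specifically, confirming that the endomorphism $g = f\alpha$ built from an arbitrary automorphism~$\alpha$ of the image genuinely lies in the $\scH$\nbd class~$H_{f}$. This is exactly the point at which the regular converse of Lemma~\ref{lem:reg-classes} is indispensable: agreement of images and of kernels alone does not force membership of~$H_{f}$, so one must first exhibit the regularity of~$g$ before the equivalence of the two classes can be concluded.
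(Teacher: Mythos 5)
Your proof is correct in all its steps, but note that the paper itself offers no argument for this lemma at all: it is imported verbatim from the earlier paper \cite{Jay1} (Proposition~2.6(iii) there, stated for general relational structures), so the only ``proof'' in this paper is the citation. Your self-contained restriction argument is the natural one and almost certainly mirrors what is done in \cite{Jay1}: the map $g \mapsto g|_{X}$, where $X = Vf$, together with the observations that $f$ acts as the identity on~$X$ and that each class of $\ker f$ meets~$X$ exactly once, gives the isomorphism. Your verifications all go through --- in particular the surjectivity step, where for $\alpha \in \Aut\oset{X}$ you exhibit $g' = f\alpha^{-1}$ with $gg'g = g$ for $g = f\alpha$, is exactly what is required before Lemma~\ref{lem:reg-classes} can be applied, and you are right that this is the one place where regularity cannot be bypassed: Lemma~\ref{lem:classes} gives only the forward implications, so equality of image and kernel alone would not place~$g$ in~$H_{f}$. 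One small economy: your preliminary remark that every element of~$H_{f}$ is regular is never actually used (the forward direction needs only Lemma~\ref{lem:classes}, and in the converse direction you prove the regularity of~$g$ directly), so it could be dropped. What your approach buys, compared with the paper's citation, is a proof specific to linear orders that exploits their one pleasant feature --- an order-preserving bijection of a linearly ordered set is automatically an automorphism --- which is precisely the point that needs a separate argument (via the relational-structure definition of homomorphism) in the general setting of \cite{Jay1}.
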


\section{The structure of~\boldmath$\End(\pmb{\Q},\pmb{\leq})$}
\label{sec:EndQ}

This section is devoted to the study of the endomorphism monoid of the
ordered set~$\Qset$ of rational numbers.  The first stage is to prove
Theorem~\ref{thm:main} concerning the group $\scH$\nbd classes
within~$\End\Qset$.  In preparation for the proof, we establish
information concerning the number of idempotent endomorphisms with
specified image (Theorem~\ref{thm:idempotents}) and construct a family
of linearly ordered sets with trivial automorphism group
(Proposition~\ref{prop:uc-Cx}).  The remainder of the section is then
concerned with establishing our information about $\scL$- and
$\scR$\nbd classes and about non-regular $\scD$\nbd classes
in~$\End\Qset$.

\begin{thm}
  \label{thm:idempotents}
  Let\/ $\Omega = \oset{X}$~be any countable linearly ordered set.
  Then
  \begin{enumerate}
  \item if\/ $\card{X} = 1$, there are\/ $\aleph_{0}$~idempotent
    endomorphisms~$f$ of\/~$\Qset$ such that\/ $\im f \cong \Omega$;
  \item if\/ $\card{X} > 1$, there are\/ $2^{\aleph_{0}}$~idempotent
    endomorphisms~$f$ of\/~$\Qset$ such that\/ $\im f \cong \Omega$.
  \end{enumerate}
\end{thm}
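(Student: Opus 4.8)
The plan is to prove the two cardinality claims separately, using Kubiś's Theorem~\ref{thm:Kubis} as the starting point. For part~(i), when $\card{X}=1$, an idempotent endomorphism $f$ with $\im f \cong \Omega$ is simply a constant map sending all of $\Q$ to a single rational $q$, and such a map is idempotent precisely because $qf = q$. Since there are exactly $\aleph_0$ choices of $q \in \Q$, and each gives a distinct such idempotent, there are exactly $\aleph_0$ of them. This part is essentially immediate and serves mainly as a base case to isolate the exceptional behaviour noted in Theorem~\ref{thm:main}(ii).

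For part~(ii), where $\card{X} > 1$, the upper bound is easy: $\End\Qset$ consists of maps $\Q \to \Q$, and since $\card{\Q^{\Q}} = 2^{\aleph_0}$, there can be at most $2^{\aleph_0}$ idempotent endomorphisms in total, so certainly at most that many with image isomorphic to $\Omega$. The substance of the argument is therefore the lower bound: I would exhibit $2^{\aleph_0}$ \emph{distinct} idempotent endomorphisms each with image order-isomorphic to $\Omega$. By Theorem~\ref{thm:Kubis} we may fix at least one idempotent $f_0$ whose image $Y = \Q f_0$ is a retract order-isomorphic to $\Omega$. The idea is then to produce uncountably many variations of $f_0$ that remain idempotent with image order-isomorphic to $\Omega$ but differ as functions on $\Q$.

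The mechanism I expect to use is to exploit the freedom in how the kernel classes of an idempotent map are arranged. An idempotent $f$ is determined by its image $Y$ (on which it acts as the identity) together with a partition of $\Q$ into convex kernel classes, one containing each point of $Y$ and fixing it. Because $\card{Y} = \card{X} > 1$, we can pick two consecutive image points, or more robustly a point $y \in Y$ together with an open interval's worth of rationals mapping to it, and then for each of continuum-many choices we reassign a single rational (lying strictly between two image points and currently in one kernel class) to an adjacent kernel class; provided convexity and order-preservation are maintained and the image set $Y$ is left unchanged, each such reassignment yields another idempotent with the same image, hence isomorphic to $\Omega$. The key observation is that between any two distinct image points there is a dense set of rationals to redistribute, giving $2^{\aleph_0}$ independent binary choices and hence $2^{\aleph_0}$ distinct idempotents. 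The main obstacle, and the step requiring care, will be verifying that each redistribution genuinely keeps the map order-preserving and idempotent while keeping the image exactly equal to $Y$ (so that it stays order-isomorphic to $\Omega$ rather than accidentally shrinking or enlarging), and confirming that distinct choices of the reassignment set give distinct maps; this amounts to a careful bookkeeping of convex kernel classes rather than any deep difficulty.
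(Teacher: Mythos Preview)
Your treatment of part~(i) is fine and matches the paper's argument exactly.

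For part~(ii), however, there is a genuine gap. Your plan is to fix the image $Y = \Q f_{0}$ and obtain $2^{\aleph_{0}}$ idempotents by redistributing rationals among adjacent kernel classes while \emph{keeping the image equal to~$Y$}. This breaks down precisely when $\Omega$ is a dense linear order. If $Y \cong \Omega$ is dense, then no two points of~$Y$ are consecutive, so no maximal interval of $\Q \setminus Y$ has \emph{both} endpoints in~$Y$. For such an interval only one endpoint lies in~$Y$ (this is exactly what Theorem~\ref{thm:image} forces for a retract), and every rational in that interval is then compelled, by order-preservation together with $f|_{Y} = \mathrm{id}$, to map to that single endpoint. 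Consequently the idempotent with image~$Y$ is uniquely determined; there is no freedom at all in the kernel classes. The extreme instance is $\Omega \cong \Qset$ with $Y = \Q$, where the only idempotent with image~$Y$ is the identity. Your phrase ``between any two distinct image points there is a dense set of rationals to redistribute'' is simply false in these cases. Even when there \emph{are} consecutive image points $a<b$, note that the choices you describe are not independent: order-preservation forces a single Dedekind-style cut in $(a,b)$, not a free binary choice at each rational. That still yields continuum-many idempotents in the non-dense case, so your argument survives there, but it does not cover all~$\Omega$ with $\card{X}>1$.

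The paper's proof avoids this obstruction by \emph{not} fixing the image. Starting from one idempotent~$f$ with $\im f \cong \Omega$, it picks a non-maximal image point~$q$, sets $I = qf^{-1}$, and for each real $\gamma \geq \sup I$ constructs endomorphisms $\xi,\eta$ (depending on~$\gamma$) with $\eta\xi = \mathrm{id}$, so that $g_{\gamma} = \xi f \eta$ is again idempotent with $\im g_{\gamma} \cong \Omega$. The kernel class $q g_{\gamma}^{-1}$ then has supremum essentially~$\gamma$, so distinct~$\gamma$ give distinct~$g_{\gamma}$. The point is that the images $\im g_{\gamma}$ themselves vary with~$\gamma$; this sidesteps the rigidity that defeats your approach in the dense case. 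If you want to repair your argument along its original lines, you would need to allow the image to move, which is effectively what the paper does.
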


\begin{proof}
  (i)~An endomorphism with image of cardinality~$1$ has the form $xf =
  q$ for all $x \in \Q$, where $q$~is some fixed point in~$\Q$.  All
  such endomorphisms are idempotent and there are countably many such
  maps.

  (ii)~Suppose $\card{X} > 1$.  By Theorem~\ref{thm:Kubis}, there
  exists an idempotent endomorphism~$f$ of~$\Qset$ with $\im f \cong
  \Omega$.  Choose some $q \in \im f$ subject to the condition that
  $q$~is not the maximum element of the image and let $I = qf^{-1}$.
  Put $\alpha = \inf I$ and $\beta = \sup I$, which are defined
  elements of the extended real numbers~$\R^{\ast}$.  By our
  assumption, $\beta < \infty$.  We shall define an idempotent
  endomorphism~$g_{\gamma}$ of~$\Qset$ as $g_{\gamma} = \xi f \eta$ in
  terms of certain $\xi,\eta \in \End\Qset$.  The definition of the
  latter will depend upon the choice of the parameter~$\gamma \geq
  \beta$, but will also be different according to whether or not
  $\beta \in \Q$ and whether $\beta$~lies in the interval~$I$.  The
  maps $\xi$~and~$\eta$ will be arranged so that $\eta\xi$~is the
  identity map and the image of~$g_{\gamma}$ is also isomorphic
  to~$\Omega$.

  \paragraph{Case~1: \boldmath$\beta \notin \Q$.}  In this case, we
  first choose any $\gamma \in \R \setminus \Q$ with with $\gamma \geq
  \beta$.  Note that there are uncountably many possible choices for
  such~$\gamma$.  Since $\beta \notin \Q$, necessarily $\alpha <
  \beta$, so we may also choose some $\delta \in \R \setminus \Q$
  satisfying $\alpha < \delta < \beta$.  The intervals
  $(\delta,\beta)$, $(\delta,\gamma)$, $(\beta,\infty)$
  and~$(\gamma,\infty)$ are all order-isomorphic to~$\Qset$ and so
  there are order-isomorphisms $\theta_{1} \colon (\delta,\beta) \to
  (\delta,\gamma)$ and $\theta_{2} \colon (\beta,\infty) \to
  (\gamma,\infty)$.  Define $\xi$ to be the order-automorphism
  of~$\Qset$ given by
  \[
  x\xi = \begin{cases}
    x &\text{for $x < \delta$} \\
    x\theta_{1} &\text{for $\delta < x < \beta$} \\
    x\theta_{2} &\text{for $x > \beta$}
  \end{cases}
  \]
  and $\eta$~to be its inverse.  Certainly then $\eta\xi$~is the
  identity map on~$\Q$ and $I\xi^{-1} = I \cup (\beta,\gamma)$ \ (that
  is, $I\xi^{-1}$~equals either~$(\alpha,\gamma)$
  or~$[\alpha,\gamma)$, depending upon whether or not $\alpha \in
    I$).

  \paragraph{Case~2: \boldmath$\beta \in \Q$ and $\beta \in I$.} In
  this case, consider any $\gamma \in \R$ with $\gamma \geq \beta$.
  We use an order-isomorphism~$\theta$ from~$(\gamma,\infty)$
  to~$(\beta,\infty)$, to define $\xi,\eta \in \End\Qset$ by
  \[
  x\xi = \begin{cases}
    x &\text{for $x \leq \beta$} \\
    \beta &\text{for $\beta < x \leq \gamma$} \\
    x\theta &\text{for $x > \gamma$}
  \end{cases}
  \]
  and
  \[
  x\eta = \begin{cases}
    x &\text{for $x \leq \beta$} \\
    x\theta^{-1} &\text{for $x > \beta$}.
  \end{cases}
  \]
  Then by construction, $\eta\xi$~is the identity map on~$\Q$ and
  $I\xi^{-1} = I \cup (\beta,\gamma]$ \ (that is, $I\xi^{-1}$~equals
  either~$(\alpha,\gamma]$ or~$[\alpha,\gamma]$, depending upon
  whether or not $\alpha \in I$).

  \paragraph{Case~3: \boldmath$\beta \in \Q$ but $\beta \notin I$.}
  Again we consider any $\gamma \in \R$ with $\gamma \geq \beta$.  We
  pick some $\delta \in \Q$ with $\delta > \gamma$ and let $\theta_{1}
  \colon (\alpha,\gamma) \to (\alpha,\beta)$ and $\theta_{2} \colon
  (\delta,\infty) \to (\beta,\infty)$ be order-isomorphisms.  Now
  define $\xi,\eta \in \End\Qset$ by
  \[
  x\xi = \begin{cases}
    x &\text{for $x \leq \alpha$} \\
    x\theta_{1} &\text{for $\alpha < x < \gamma$} \\
    \beta &\text{for $\gamma \leq x \leq \delta$} \\
    x\theta_{2} &\text{for $x > \delta$}
  \end{cases}
  \]
  and
  \[
  x\eta = \begin{cases}
    x &\text{for $x \leq \alpha$} \\
    x\theta_{1}^{-1} &\text{for $\alpha < x < \beta$} \\
    \delta &\text{for $x = \beta$} \\
    x\theta_{2}^{-1} &\text{for $x > \beta$}.
  \end{cases}
  \]
  Then $\eta\xi$~is the identity map on~$\Q$ and $I\xi^{-1} = I \cup
  [\beta,\gamma)$ \ (that is, $I\xi^{-1}$~equals
    either~$(\alpha,\gamma)$ or~$[\alpha,\gamma)$, depending upon
      whether or not $\alpha \in I$).

  \spc

  Using the $\xi$~and~$\eta$ just defined (depending upon, amongst
  other things, a choice of~$\gamma$), write~$g_{\gamma} = \xi f
  \eta$.  Using the fact that $f^{2} = f$ and $\eta\xi$~is the
  identity, we observe that $g_{\gamma}$~is also an idempotent
  endomorphism of~$\Qset$.  As $\xi$~is surjective and $\eta$~is
  injective in each case, it follows that $\im g_{\gamma} = \im f\eta
  \cong \im f \cong \Omega$.  Moreover, $qg_{\gamma}^{-1} = I\xi^{-1}$
  and this equals some interval~$J$ with $(\alpha,\gamma) \subseteq J
  \subseteq [\alpha,\gamma]$.

  In conclusion, as in each case  there are uncountably many choices
  for~$\gamma$, we have constructed $2^{\aleph_{0}}$~idempotent
  endomorphisms~$g_{\gamma}$ with image isomorphic to~$\Omega$.
\end{proof}

We shall now embark upon the proof of our main theorem.  The first
step is to construct some linearly ordered sets with trivial
automorphism group.

Consider an enumeration $\x = (x_{n})$ of the set~$\Q$ of rational
numbers.  Define a set~$C_{\x}$ depending upon this enumeration as a
set of ordered pairs of rational numbers and integers as follows:
\[
C_{\x} = \set{(x_{n},i)}{n \in \N, \; 0 \leq i \leq n}.
\]
We write $\C_{\x} = \oset{C_{\x}}$ for the linearly ordered set where
$\leq$~is the lexicographic order on~$C_{\x}$:
\[
(x_{m},i) \leq (x_{n},j)
\qquad \text{if and only if} \qquad
\text{either $x_{m} < x_{n}$, or both $m = n$ and $i \leq j$}.
\]
Essentially this definition arranges the points in the set $X_{n} =
\set{(x_{n},i)}{0 \leq i \leq n}$ in increasing order as indexed
by~$i$ and then orders the sets~$X_{n}$ relative to each other
according to the linear order on~$\Q$.  Thus, we are in effect
constructing~$\C_{\x}$ from~$\Qset$ by replacing each point~$x_{n}$
in~$\Q$ by a finite chain of length~$n$.

It is a straightforward observation, using the fact that $\Qset$~is
linearly ordered, to observe that $\C_{\x}$~is also a linearly ordered
set.  Furthermore, we similarly deduce the following facts.

\begin{lemma}
  \label{lem:Cx-basic}
  Consider two points $(x_{m},i), (x_{n},j) \in C_{\x}$.
  \begin{enumerate}
  \item There are infinitely many $c \in C_{\x}$ and infinitely many
    $d \in C_{\x}$ such that $c < (x_{m},i) < d$.
  \item If $x_{m} < x_{n}$, then there exist infinitely many $c \in
    C_{\x}$ such that $(x_{m},i) < c < (x_{n},j)$.
  \item For every~$i$ with $0 \leq i < n-1$, there exist no element~$c
    \in C_{\x}$ such that $(x_{n},i) < c < (x_{n},i+1)$.
  \end{enumerate}
\end{lemma}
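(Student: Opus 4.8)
The plan is to derive all three statements from two features of the construction: the density and unboundedness of~$\Qset$, and the fact that the lexicographic order on~$C_{\x}$ compares two points coming from \emph{different} rationals purely according to their first coordinates, while two points sharing a first coordinate are compared via their integer second coordinates. Throughout, for a rational~$y$ I write $p$ for its unique index in the enumeration~$\x$, so that $y = x_{p}$ and $(x_{p},0) \in C_{\x}$ (such a point always exists, since $0 \leq 0 \leq p$).

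For part~(i), I would use that there are infinitely many rationals~$y$ with $y < x_{m}$ (indeed $\Qset$ is a dense order with no least element). For each such $y = x_{p}$ we have $x_{p} < x_{m}$, so $(x_{p},0) < (x_{m},i)$ straight from the definition of the order; distinct values of~$y$ give distinct points of~$C_{\x}$, and hence infinitely many~$c$ below $(x_{m},i)$. The existence of infinitely many~$d$ above $(x_{m},i)$ is entirely dual, using that $\Qset$ has no greatest element. Part~(ii) uses the same device, now invoking density: when $x_{m} < x_{n}$ there are infinitely many rationals~$y$ with $x_{m} < y < x_{n}$, and for each such $y = x_{p}$ the point $(x_{p},0)$ satisfies $(x_{m},i) < (x_{p},0) < (x_{n},j)$, since $x_{m} < x_{p} < x_{n}$. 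Distinct~$y$ again produce distinct points, giving infinitely many suitable~$c$.

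Part~(iii) is where the only genuine (if mild) care is needed, via a short case analysis. Suppose some $c = (x_{p},k) \in C_{\x}$ satisfied $(x_{n},i) < c < (x_{n},i+1)$. If $x_{p} \neq x_{n}$, then the left inequality forces $x_{n} < x_{p}$ while the right forces $x_{p} < x_{n}$, a contradiction; hence $x_{p} = x_{n}$, that is $p = n$. The two inequalities then reduce to $i < k$ and $k < i+1$ on the integer second coordinates, which is impossible. So no such~$c$ exists. (The same argument in fact covers every~$i$ with $0 \leq i \leq n-1$, so the stated range is included.) None of the three parts presents a real obstacle: the construction was designed precisely so that $\C_{\x}$ inherits density ``between blocks'' from~$\Qset$, while the finite blocks~$X_{n}$ supply the only pairs of immediate neighbours.
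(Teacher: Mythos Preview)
Your proof is correct and matches the paper's approach: the paper does not give an explicit proof of this lemma, merely remarking that the facts are ``similarly deduce[d]'' from the density and unboundedness of~$\Qset$ together with the definition of the lexicographic order, which is precisely what you carry out. Your parenthetical observation that part~(iii) actually holds for all $0 \leq i \leq n-1$ is also correct; the stated range $0 \leq i < n-1$ appears to be a minor slip in the paper.
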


\begin{lemma}
  \label{lem:Cx-map}
  Let $\x = (x_{n})$ and $\y = (y_{n})$ be two enumerations
  of\/~$\Q$.  Then $\C_{\x} = (C_{\x},\leq)$ and $\C_{\y} =
  (C_{\y},\leq)$ are order-isomorphic if and only if the map $x_{n}
  \mapsto y_{n}$, for $n \in \N$, defines an automorphism of the
  ordered set~$\Qset$.  Specifically, if $\phi$~is an
  order-isomorphism from~$\C_{\x}$ to~$\C_{\y}$, then $(x_{n},i)\phi =
  (y_{n},i)$ for all $n \in N$ and $0 \leq i \leq n$.
\end{lemma}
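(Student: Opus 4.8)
The plan is to show that the partition of $C_{\x}$ into the ``blocks'' $X_{n} = \set{(x_{n},i)}{0 \leq i \leq n}$ can be recovered purely from the order structure of~$\C_{\x}$, and then to exploit the fact that $X_{n}$ has cardinality~$n+1$, so that the blocks have pairwise distinct finite sizes, in order to pin down any order-isomorphism. First I would introduce the relation on~$C_{\x}$ that declares two points to be related precisely when only finitely many elements of~$C_{\x}$ lie (weakly) between them. Using Lemma~\ref{lem:Cx-basic}, I would check that this is an equivalence relation whose classes are convex and are exactly the blocks~$X_{n}$: points of a common block have only finitely many elements between them (the intervening block elements, by the lexicographic description of the order), whereas by Lemma~\ref{lem:Cx-basic}(ii) points of distinct blocks have infinitely many elements between them and so lie in distinct classes. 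Since this relation is defined solely in terms of the order, any order-isomorphism $\phi \colon \C_{\x} \to \C_{\y}$ must carry blocks of~$C_{\x}$ bijectively onto blocks of~$C_{\y}$ and respect the induced linear order on the set of blocks.

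For the reverse implication, suppose $x_{n} \mapsto y_{n}$ is an automorphism of~$\Qset$. I would define $(x_{n},i)\phi = (y_{n},i)$ for $0 \leq i \leq n$; this lands in~$C_{\y}$ and is a bijection because the block over~$x_{n}$ and the block over~$y_{n}$ both have cardinality~$n+1$. A short case analysis then shows both $\phi$ and its inverse are order-preserving: two points in different blocks are compared via the $\Qset$\nbd order, which $x_{n}\mapsto y_{n}$ preserves, while two points in a common block are compared via the integer second coordinate, which $\phi$ leaves fixed. Hence $\phi$ is an order-isomorphism.

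For the forward implication, which I expect to be the crux, I would use the block characterisation. Since $\phi$ sends blocks to blocks and $X_{n}$ is the unique block of size~$n+1$ among all blocks of~$C_{\y}$, it must map $X_{n}$ onto $\set{(y_{n},k)}{0 \leq k \leq n}$. Restricted to these blocks, $\phi$ is an order-isomorphism between two finite chains of length~$n+1$, and a finite chain admits only the identity order-automorphism, so $(x_{n},i)\phi = (y_{n},i)$; this is the ``Specifically'' clause. Finally, because $\phi$ preserves the order on the set of blocks, and the block-orders of~$\C_{\x}$ and~$\C_{\y}$ are order-isomorphic to~$\Qset$ via $X_{n} \leftrightarrow x_{n}$ and $\set{(y_{n},k)}{0 \leq k \leq n} \leftrightarrow y_{n}$ respectively, the bijection $x_{n} \mapsto y_{n}$ is forced to be an order-isomorphism of~$\Qset$, that is, an automorphism.

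The main obstacle is setting up the order-theoretic definition of the blocks cleanly, so that it automatically transfers across any isomorphism and so that the equivalence classes coincide exactly with the~$X_{n}$; once the blocks are identified and their distinct sizes are noted, both directions reduce to routine bookkeeping.
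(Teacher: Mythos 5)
Your proposal is correct, and its overall skeleton matches the paper's: blocks map to blocks, the pairwise distinct finite block sizes force $X_{n}\phi = Y_{n}$, uniqueness of order-isomorphisms between finite chains gives the ``Specifically'' clause, and the induced map on the ordered set of blocks yields the automorphism of~$\Qset$; the converse is the same routine verification in both. The genuine difference is the mechanism for block preservation. You define an explicit order-invariant equivalence relation (only finitely many points lie weakly between the two given points) and check, via Lemma~\ref{lem:Cx-basic}(ii) and the lexicographic description of the order, that its classes are exactly the sets~$X_{n}$; preservation under any order-isomorphism is then automatic, and $\phi$~and~$\phi^{-1}$ are handled symmetrically in one stroke. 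The paper instead argues by contradiction: if $X_{n}\phi$ met two distinct blocks, then some consecutive pair $(x_{n},k)$,~$(x_{n},k+1)$ would have a point of~$C_{\y}$ strictly between their images (Lemma~\ref{lem:Cx-basic}(ii)), which pulls back under~$\phi^{-1}$ to contradict Lemma~\ref{lem:Cx-basic}(iii); this only yields $X_{n}\phi \subseteq Y_{m}$, so the paper must rerun the argument on~$\phi^{-1}$ to upgrade containment to equality. Your invariant-relation formulation is thus slightly more modular (the transfer across~$\phi$ comes for free) at the small cost of verifying that the relation is an equivalence whose classes are precisely the blocks; the paper's version is more hands-on but rests on exactly the same two structural facts, namely density between blocks and discreteness within them. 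Incidentally, your cardinality step, counting $\card{X_{n}} = n+1$, quietly corrects the paper's slip where it asserts $X_{n}$ contains ``precisely $n$ points.''
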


\begin{proof}
  Suppose $\phi$~is an order-isomorphism from~$\C_{\x}$ to~$\C_{\y}$.
  Then for $n \in \N$, write $X_{n} = \set{(x_{n},i)}{0 \leq i \leq
    n}$ and $Y_{n} = \set{(y_{n},i)}{0 \leq i \leq n}$.  We shall
  first observe that, for each $n \in \N$, there exists some~$m$ with
  $X_{n}\phi \subseteq Y_{m}$.  Suppose, for a contradiction, that
  $X_{n}\phi \nsubseteq Y_{m}$ for all $m \in \N$.  The
  elements~$(x_{n},i)$, for $0 \leq i \leq n$, are then mapped into at
  least two different sets~$Y_{m}$ and so there is some value~$k$ such
  that $(x_{n},k)\phi \in Y_{m}$ and $(x_{n},k+1)\phi \in Y_{m'}$ for
  distinct values $m,m' \in \N$.  Since $(x_{n},k)\phi <
  (x_{n},k+1)\phi$, it must be the case that $y_{m} < y_{m'}$.  Now by
  Lemma~\ref{lem:Cx-basic}(ii), there exists $c \in C_{\y}$ satisfying
  $(x_{n},k)\phi < c < (x_{n},k+1)\phi$.  We then conclude $(x_{n},k)
  < c\phi^{-1} < (x_{n},k+1)$, which contradicts
  Lemma~\ref{lem:Cx-basic}(iii).

  In conclusion, there exists some~$m$ such that $X_{n}\phi \subseteq
  Y_{m}$.  However, $\phi^{-1}$~also defines an order-isomorphism
  from~$\C_{\y}$ to~$\C_{\x}$ and the set~$Y_{m}\phi^{-1}$ contains
  all the points from~$X_{n}$.  The argument above applied
  to~$\phi^{-1}$ then establishes that $Y_{m}\phi^{-1} = X_{n}$ and so
  $X_{n}\phi = Y_{m}$.  Since $X_{n}$~contains precisely $n$~points,
  we conclude that $m = n$.  Since $\phi$~is order-preserving, it now
  follows that $(x_{n},i)\phi = (y_{n},i)$ for all $n \in \N$ and $0
  \leq i \leq n$, as claimed in the statement of the lemma.  Now, if
  $x_{m} \leq x_{n}$, then it follows that $(x_{m},0)\phi \leq
  (x_{n},0)\phi$, so necessarily $y_{m} \leq y_{n}$.  Hence we
  conclude that the map $x_{n} \mapsto y_{n}$ is indeed an
  automorphism of~$\Qset$.

  Conversely, if $x_{n} \mapsto y_{n}$ is an automorphism of~$\Qset$,
  then the map $(x_{n},i) \mapsto (y_{n},i)$, for $1 \leq i \leq n$
  and $n \in \N$, defines an order-isomorphism $\C_{\x} \to \C_{\y}$.
  This completes the proof of the lemma.
\end{proof}

Taking $\y = \x$ in the formula for order-isomorphisms in the previous
lemma yields:

\begin{cor}
  \label{cor:AutCx}
  Let $\x = (x_{n})$ be an enumeration of\/~$\Q$ and $\C_{\x} =
  \oset{C_{\x}}$.  Then $\Aut \C_{\x}$~is trivial. \qed
\end{cor}

\begin{prop}
  \label{prop:uc-Cx}
  There exists a set~$P$ of\/ $2^{\aleph_{0}}$~many enumerations of
  the set\/~$\Q$ of rational numbers such that $\C_{\mathbf{x}}
  \not\cong \C_{\mathbf{y}}$ for distinct $\mathbf{x},\mathbf{y} \in
  P$.
\end{prop}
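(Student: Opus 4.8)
The plan is to reduce the statement, via Lemma~\ref{lem:Cx-map}, to producing $2^{\aleph_{0}}$~enumerations of~$\Q$ no two of which are related by an automorphism of~$\Qset$, and then to build such a family explicitly by encoding an arbitrary infinite binary sequence into an enumeration. First I would record the reduction. By Lemma~\ref{lem:Cx-map}, for enumerations $\x = (x_{n})$ and $\y = (y_{n})$ of~$\Q$ we have $\C_{\x} \cong \C_{\y}$ if and only if the bijection $x_{n} \mapsto y_{n}$ is an automorphism of~$\Qset$, i.e.\ if and only if $x_{m} < x_{n}$ exactly when $y_{m} < y_{n}$, for all $m,n$. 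Consequently, to guarantee $\C_{\x} \not\cong \C_{\y}$ it suffices to exhibit a single pair of indices $m,n$ at which the two enumerations disagree on order, say $x_{m} < x_{n}$ but $y_{m} > y_{n}$, since then $x_{n} \mapsto y_{n}$ fails to be order-preserving.

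Next I would set up the encoding. Since $\Q$ and~$\N$ are both countably infinite, I would fix a partition of~$\Q$ into two-element sets $\{p_{k},q_{k}\}$ with $p_{k} < q_{k}$, and a partition of the index set~$\N$ into two-element sets $\{a_{k},b_{k}\}$ with $a_{k} < b_{k}$, both indexed by the same parameter $k \in \N$. For each binary sequence $s = (s_{k}) \in \{0,1\}^{\N}$ I would define an enumeration~$\x^{(s)}$ by sending position $a_{k} \mapsto p_{k}$ and $b_{k} \mapsto q_{k}$ when $s_{k} = 0$, and $a_{k} \mapsto q_{k}$, $b_{k} \mapsto p_{k}$ when $s_{k} = 1$. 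Because the pairs $\{p_{k},q_{k}\}$ partition~$\Q$ and the pairs $\{a_{k},b_{k}\}$ partition~$\N$, each~$\x^{(s)}$ is a bijection $\N \to \Q$, and hence a genuine enumeration; note that no density or endpoint conditions need be checked separately, as Lemma~\ref{lem:Cx-map} applies to any enumeration.

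Finally I would verify inequivalence and count. Given distinct $s,t \in \{0,1\}^{\N}$, choose $k$ with $s_{k} \neq t_{k}$ and take $m = a_{k}$, $n = b_{k}$. At this index pair the enumerations $\x^{(s)}$ and~$\x^{(t)}$ list $p_{k}$ and~$q_{k}$ in opposite orders, so the order relation between positions $m$~and~$n$ is reversed; by the reduction above this forces $\C_{\x^{(s)}} \not\cong \C_{\x^{(t)}}$. In particular $s \mapsto \x^{(s)}$ is injective, so $P = \set{\x^{(s)}}{s \in \{0,1\}^{\N}}$ is a set of $2^{\aleph_{0}}$~enumerations whose associated linearly ordered sets are pairwise non-isomorphic, as required.

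The argument is short once Lemma~\ref{lem:Cx-map} is in hand, so there is no serious analytic obstacle; the one point genuinely requiring care is arranging the encoding so that each~$\x^{(s)}$ remains a bijection onto \emph{all} of~$\Q$ while still carrying enough mutually independent binary data to separate continuum-many isomorphism classes. Partitioning~$\Q$ into pairs and swapping within the pairs is what resolves both demands at once: the partition guarantees surjectivity and injectivity of every~$\x^{(s)}$, while the independent swaps supply, for each coordinate, a dedicated pair of indices whose relative order can be flipped without disturbing any other.
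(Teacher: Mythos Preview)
Your proof is correct and is essentially the same approach as the paper's: both reduce via Lemma~\ref{lem:Cx-map} to producing $2^{\aleph_{0}}$ enumerations no two of which are related by an order-automorphism, and both achieve this by swapping the values at designated pairs of indices according to an arbitrary subset of~$\N$. The paper's version is phrased slightly more algebraically---it fixes one enumeration~$\x$, uses the specific index pairs $\{2i,2i{+}1\}$, and writes the swaps as involutions $\pi_{A} = \prod_{i\in A}(2i\;\,2i{+}1)$ acting on positions---but the underlying mechanism and the verification are identical to yours.
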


\begin{proof}
  Fix one enumeration $\mathbf{x} = (x_{n})$ of the set~$\Q$.  For
  each~$i \in \N$, define~$\pi_{i}$ to be the transposition~$(2i \;\:
  2i+1)$ in the symmetric group~$\Sym(\N)$ and, for any subset~$A
  \subseteq \N$, define the involution $\pi_{A} = \prod_{i \in A}
  \pi_{i}$.  We shall write~$\x\pi_{A}$ for the
  enumeration~$(x_{n\pi_{A}})$ of~$\Q$ and set $P = \set{\x\pi_{A}}{A
    \subseteq \N}$.  Note that, for distinct subsets $A,B \subseteq
  \N$, the map given by $x_{n\pi_{A}} \mapsto x_{n\pi_{B}}$ for $n \in
  \N$ cannot be an order-automorphism of~$\Qset$, since
  $\pi_{A}\pi_{B}$~is again an involution.  It then follows that the ordered
  sets~$\C_{\x\pi_{A}}$, for $A \subseteq \N$, are pairwise
  non-isomorphic by Lemma~\ref{lem:Cx-map}.
\end{proof}

If $\Omega = (U,{\leq_{1}})$ and $\Lambda = (V,{\leq_{2}})$ are two
linearly ordered sets, we can define a new ordered set, that we shall
denote by~$\Omega + \Lambda$, as $\Omega + \Lambda = \oset{U \cup V}$,
where we assume that the sets $U$~and~$V$ are disjoint and where we
define the order~$\leq$ on~$U \cup V$ by $v \leq w$ if and only if one
of the following conditions holds (i)~$v,w \in U$ and $v \leq_{1} w$,
(ii)~$v,w \in V$ and $v \leq_{2} w$, or (iii)~$v \in U$ and $w \in V$.
In effect, in~$\Omega + \Lambda$, we are retaining the order in both
$\Omega$~and~$\Lambda$ but in addition are placing all points
in~$\Omega$ before all points in~$\Lambda$.  One observes immediately
that $\Omega + \Lambda$~is then also a linearly ordered set and is the
union of two substructures isomorphic to~$\Omega$ and to~$\Lambda$,
respectively.

\begin{prop}
  \label{prop:X+C}
  Let $\Omega = \oset{V}$ be any linearly ordered set.
  \begin{enumerate}
  \item If\/ $\x$~is any enumeration of the set\/~$\Q$ of rational
    numbers, then $\Aut ( \Omega + \C_{\x} )$~is isomorphic to~$\Aut
    \Omega$.
  \item If\/ $\x$~and~$\y$ are enumerations of\/~$\Q$, then $\Omega +
    \C_{\x}$~is order-isomorphic to~$\Omega + \C_{\y}$ if and only if
    $\C_{\x}$~is order-isomorphic to~$\C_{\y}$.
  \end{enumerate}
\end{prop}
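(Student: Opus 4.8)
The plan is to reduce both parts to a single statement: that the map in question must respect the decomposition. Precisely, an automorphism of $\Omega + \C_{\x}$ must carry the final segment $C_{\x}$ onto itself, and an isomorphism $\Omega + \C_{\x} \to \Omega + \C_{\y}$ must carry $C_{\x}$ onto $C_{\y}$. Granting this, part~(i) follows quickly: such an automorphism restricts to an automorphism of $\C_{\x}$, which is trivial by Corollary~\ref{cor:AutCx}, so it is the identity there and is determined by its restriction to $\Omega$; conversely every automorphism of $\Omega$ extends by the identity on $C_{\x}$, yielding an isomorphism $\Aut(\Omega+\C_{\x}) \cong \Aut\Omega$. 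Part~(ii) is then immediate in both directions, so the whole proposition reduces to this decomposition-preservation statement.

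To prove that, I would first isolate an order-invariant feature of $\C_{\x}$. Call the maximal sets of points in which consecutive elements have nothing strictly between them the \emph{blocks}; by Lemma~\ref{lem:Cx-basic} the blocks of $\C_{\x}$ are exactly the finite chains $\set{(x_n,i)}{0 \leq i \leq n}$, so $\C_{\x}$ has precisely one block of each size in $S = \set{n+1}{n \in \N}$. Since the block decomposition is defined purely from the order, any order-isomorphism carries blocks to blocks and preserves their sizes; in particular the set of block sizes is an isomorphism invariant. The key lemma I would then establish is that \emph{no $\C_{\mathbf{z}}$ is order-isomorphic to a proper final segment of any $\C_{\x}$}: because $\C_{\x}$ has no least element and its blocks are densely ordered (Lemma~\ref{lem:Cx-basic}(i),(ii)), any proper final segment omits infinitely many whole blocks from below, and as each size in $S$ is realised only once, its set of block sizes is a proper subset of $S$ (the single truncated bottom block can restore at most one omitted size), whereas every $\C_{\mathbf{z}}$ realises all of $S$.

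The decomposition-preservation then follows by a nesting argument. Since $\Omega$ lies entirely below $\C_{\x}$, the set $C_{\x}$ is a final segment of $\Omega + \C_{\x}$, and the image of a final segment under any automorphism $\phi$ is again a final segment, isomorphic to $\C_{\x}$. Final segments of a linear order are nested by inclusion, so $C_{\x}\phi$ is comparable with $C_{\x}$: if $C_{\x}\phi \subsetneq C_{\x}$ it would be a proper final segment of $\C_{\x}$ isomorphic to $\C_{\x}$, and if $C_{\x}\phi \supsetneq C_{\x}$ then $C_{\x}\phi^{-1}$ is a proper final segment of $\C_{\x}$ isomorphic to $\C_{\x}$; each contradicts the key lemma. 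Hence $C_{\x}\phi = C_{\x}$, completing part~(i) as above.

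For part~(ii) the forward implication runs identically: an isomorphism $\psi\colon \Omega+\C_{\x} \to \Omega+\C_{\y}$ sends $C_{\x}$ to a final segment $C_{\x}\psi \cong \C_{\x}$ comparable with $C_{\y}$, and if the containment were proper in either direction the key lemma would be violated (a proper final segment of $\C_{\y}$ isomorphic to $\C_{\x}$, or, via $\psi^{-1}$, a proper final segment of $\C_{\x}$ isomorphic to $\C_{\y}$); so $C_{\x}\psi = C_{\y}$ and $\psi$ restricts to the required isomorphism $\C_{\x} \to \C_{\y}$. The reverse implication is trivial, extending an isomorphism $\C_{\x}\to\C_{\y}$ by the identity on $\Omega$. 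I expect the main obstacle to be the key lemma, and in particular checking carefully that a proper truncation from below always destroys infinitely many block sizes: this relies essentially on $\C_{\x}$ having no minimum and densely ordered blocks, so that no proper final segment can omit only finitely many blocks, together with the fact that at most one block of a final segment is truncated.
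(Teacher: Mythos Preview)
Your proof is correct and broadly follows the same architecture as the paper's: both reduce to showing that any automorphism (or isomorphism) must carry $C_{\x}$ onto $C_{\x}$ (respectively $C_{\y}$), both observe that the image of $C_{\x}$ is comparable with $C_{\x}$ by inclusion, and both derive a contradiction from proper containment. The difference lies in how that contradiction is obtained. The paper argues pointwise: assuming $C_{\x}f \subseteq C_{\x}$, it tracks each chain $X_{n}$ and, via the adjacency and density facts of Lemma~\ref{lem:Cx-basic}, shows directly that $X_{n}f = X_{n}$ for every~$n$, handling the reverse containment by passing to~$f^{-1}$. You instead extract an order-invariant---the set of block sizes---and prove once and for all that a proper final segment of any~$\C_{\x}$ realises only a co-infinite set of sizes from~$S$, hence cannot be isomorphic to any~$\C_{\mathbf{z}}$. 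Your route is a little more conceptual and packages the work into a single reusable lemma, at the cost of having to verify carefully that blocks are preserved under passage to final segments and that the truncated bottom block cannot repair the infinitely many missing sizes (your sketch of this is fine, though note the truncated block could in principle have a size outside~$S$ rather than merely restoring one, which still yields the desired inequality of block-size sets). The paper's route is more hands-on but avoids introducing the auxiliary invariant.
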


\begin{proof}
  (i)~Recall that the set of points in~$\Omega + \C_{\x}$ is the
  union~$V \cup C_{\x}$.  To simplify notation, we shall write
  $\leq$~for the order both on~$\Omega$ and on~$\Omega + \C_{\x}$,
  since they coincide for points in the set~$V$.  We shall first,
  using a variation of the argument employed in
  Lemma~\ref{lem:Cx-map}, show that if $f \in \Aut ( \Omega + \C_{\x}
  )$ then $Vf = V$ and $C_{\x}f = C_{\x}$.  As before, we
  write~$X_{n}$ for the subset $\set{(x_{n},i)}{0 \leq i \leq n}$
  of~$C_{\x}$.

  \paragraph{Case~1:} We first consider the case when $C_{\x}f
  \subseteq C_{\x}$.  Then for each $n \in \N$, the set~$X_{n}$ is
  mapped into~$C_{\x}$ and the same argument as used in
  Lemma~\ref{lem:Cx-map} shows that there exists some~$m = m(n)$ such
  that $X_{n}f \subseteq X_{m}$.  Note $m \geq n$ due to the
  cardinality of the two sets involved.

  If $m > n$, there exists some $c \in X_{m} \setminus X_{n}f$.  Pick
  some $d \in X_{n}$.  Now either $df < c$ or $c < df$.  We consider
  the case when $c < df$.  Now $c = vf$ for some $v \in V \cup
  C_{\x}$.  Note that $v \notin X_{n}$ by assumption.  We use parts
  (i)~or~(ii) of Lemma~\ref{lem:Cx-basic}, depending upon whether $v
  \in V$ or $v \in C_{\x}$, to produce infinitely many elements $b \in
  C_{\x}$ satisfying $v < b < d$.  Then $vf < bf < df$, so $bf \in
  X_{n}$ also.  This is a contradiction since $X_{n}$~is finite.  When
  $df < c$, the argument is identical (though we know immediately from
  the order on~$\Omega + \C_{\x}$ that necessarily $c$~is the image of
  a point from~$C_{\x}$).  We conclude that $m(n) = n$ for all $n \in
  \N$ and so our map satisfies $X_{n}f = X_{n}$ for all $n \in \N$ and
  hence $C_{\x}f = C_{\x}$.  It then follows that $Vf = V$ also.

  \paragraph{Case~2:} Suppose that $C_{\x}f \not\subseteq C_{\x}$.
  Then there exists some $c \in C_{\x}$ such that $cf \in V$.  Then if
  $d$~is any point in~$C_{\x}$, it satisfies $cf < d$, so that $c <
  df^{-1}$.  Necessarily then $df^{-1} \in C_{\x}$ and we deduce $d
  \in C_{\x}f$.  We conclude that in this case $C_{\x}f \supseteq
  C_{\x}$.  We then apply the inverse of~$f$ and note that $f^{-1}$~is
  an automorphism of~$\Omega + \C_{\x}$ that satisfies $C_{\x}f^{-1}
  \subseteq C_{\x}$.  Case~1 tells us that $C_{\x}f^{-1} = C_{\x}$ and
  hence $C_{\x}f = C_{\x}$, contrary to our assumption.

  \spc

  We now know that $C_{\x}f = C_{\x}$ and $Vf = V$ for every
  automorphism~$f$ of~$\Omega + \C_{x}$ and it is a simple matter to
  conclude that
  \[
  \Aut ( \Omega + \C_{\x} ) \cong \Aut \Omega \times \Aut \C_{x} \cong
  \Aut \Omega,
  \]
  with use of Corollary~\ref{cor:AutCx}.

  (ii)~This is established similarly.  We observe that if $\phi$~is an
  isomorphism from $\Omega + \C_{\x}$ to~$\Omega + \C_{\y}$, then we
  show, using the argument just used in part~(i), that $V\phi = V$ and
  $C_{\x}\phi = C_{\y}$.  It then follows that $\phi$~induces an
  isomorphism $\C_{\x} \to \C_{\y}$.
\end{proof}

We are now able to prove our main theorem concerning the maximal
subgroups of $\End\Qset$ as stated in the Introduction.

\begin{proofof}{Theorem~\ref{thm:main}}
  (i)~Let $\Omega$~be any countable linearly ordered set.  Let $P$~be
  a set of $2^{\aleph_{0}}$~many enumerations of~$\Q$ such that
  $\C_{\x} \not\cong \C_{\y}$ when $\x$~and~$\y$ are distinct members
  of~$P$, as provided by Proposition~\ref{prop:uc-Cx}.  Now if $\x \in
  P$, then $\Omega + \C_{\x}$~is some countable linearly ordered set
  and so, by Theorem~\ref{thm:Kubis}, $\Omega + \C_{\x}$~is isomorphic
  to some retract of~$\Qset$; that is, there is an idempotent
  endomorphism~$f_{\x}$ of~$\Qset$ such that $\im f_{\x} \cong \Omega
  + \C_{\x}$.  Then the $\scH$\nbd class of~$f_{\x}$ is
  \[
  H_{f_{\x}} \cong \Aut ( \Omega + \C_{\x} ) \cong \Aut \Omega,
  \]
  by Proposition~\ref{prop:X+C}(i).  Hence each~$f_{\x}$ is an
  idempotent endomorphism with $\scH$\nbd class isomorphic to the
  automorphism group of~$\Omega$.

  Observe, moreover, that since $\Omega + \C_{\mathbf{x}} \not\cong
  \Omega + \C_{\mathbf{y}}$ for distinct $\mathbf{x},\mathbf{y} \in P$
  as shown in Proposition~\ref{prop:X+C}(ii), the $\scD$\nbd classes
  of the idempotent endomorphisms~$f_{\mathbf{x}}$ are distinct, using
  Lemma~\ref{lem:reg-classes}(iii).  Hence there are
  $2^{\aleph_{0}}$~distinct regular $\scD$\nbd classes of~$\End\Qset$
  with group $\scH$\nbd class isomorphic to~$\Aut\Omega$.

  (ii)~We make use of Theorem~\ref{thm:idempotents}.  Part~(i) of that
  theorem tells us that any endomorphism of~$\Qset$ with image of
  cardinality~$1$ is idempotent, therefore regular, and the set of all
  such endomorphisms forms a single $\scD$\nbd class~$D_{0}$ by
  Lemma~\ref{lem:reg-classes}(iii).  If $f \in D_{0}$, then $\{f\}$~is
  a single $\scH$\nbd class, again by use of
  Lemma~\ref{lem:reg-classes}, since any two endomorphisms in~$D_{0}$
  are $\scR$\nbd related but no distinct pair are $\scL$\nbd related.
  Thus $H_{f} = \{f\}$ and this is a copy of the trivial group.

  Suppose that $D$~is any other $\scD$\nbd class of~$\End\Qset$.  Fix
  $f_{0} \in D$ and write $\Omega = \im f_{0}$.  By
  Theorem~\ref{thm:idempotents}(ii), there are
  $2^{\aleph_{0}}$~idempotent endomorphisms~$f$ of~$\Qset$ with $\im f
  \cong \Omega$.  Each such~$f$ belongs to~$D$ by
  Lemma~\ref{lem:reg-classes}(iii) and determines a distinct group
  $\scH$\nbd class $H_{f} \cong \Aut\Omega$ by
  Lemma~\ref{lem:H-class}.  This completes the proof of the theorem.
\end{proofof}

The first paragraph of the proof of Theorem~\ref{thm:main}(ii) above
also establishes part~(i) of our result about the $\scR$\nbd classes
of~$\End\Qset$ as follows.

\begin{thm}
  \label{thm:R-classes}
  Let $f$~be an endomorphism of~$\Qset$ and write $X = \im f$.  Then
  \begin{enumerate}
  \item if\/ $\card{X} = 1$, the $\scD$\nbd class of~$f$ is a single
    $\scR$\nbd class;
  \item if\/ $\card{X} > 1$, the $\scD$\nbd class of~$f$ contains
    $2^{\aleph_{0}}$ many $\scR$\nbd classes.
  \end{enumerate}
\end{thm}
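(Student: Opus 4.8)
Part~(i) is already contained in the first paragraph of the proof of Theorem~\ref{thm:main}(ii): every endomorphism with image of cardinality~$1$ is a constant map, all such maps share the single kernel $\Q \times \Q$, and so they are pairwise $\scR$\nbd related; since they also constitute the whole $\scD$\nbd class~$D_{0}$, that class is a single $\scR$\nbd class. For part~(ii) the upper bound is immediate, since $\End\Qset$ has at most $\card{\Q}^{\card{\Q}} = 2^{\aleph_{0}}$ elements and hence at most $2^{\aleph_{0}}$ many $\scR$\nbd classes inside any $\scD$\nbd class. For the lower bound it suffices, by Lemma~\ref{lem:classes}(ii), to exhibit $2^{\aleph_{0}}$ endomorphisms that lie in the $\scD$\nbd class of~$f$ but have pairwise distinct kernels, since distinct kernels force distinct $\scR$\nbd classes. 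I stress that this has to be carried out for \emph{every} $\scD$\nbd class, including the non-regular ones, so one cannot simply quote the idempotents produced in Theorem~\ref{thm:idempotents} (those are regular and would lie in a different $\scD$\nbd class when $f$ is not regular).

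The mechanism I would use is left multiplication by automorphisms. If $u \in \Aut\Qset$ then $uf \in Mf$ and $f = u^{-1}(uf) \in M(uf)$, so $uf$ is $\scL$\nbd related to~$f$ and therefore lies in the $\scD$\nbd class of~$f$. Moreover $\im(uf) = \Q u f = \im f$, while $(x,y) \in \ker(uf)$ if and only if $(xu,yu) \in \ker f$; thus the kernel classes of~$uf$ are precisely the sets $Ku^{-1}$ as $K$ ranges over the kernel classes of~$f$. Hence for every automorphism~$u$ the element $uf$ remains in the required $\scD$\nbd class, and its kernel is completely controlled through the pullback by~$u$. This works uniformly for regular and non-regular $\scD$\nbd classes alike.

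It then remains to produce $2^{\aleph_{0}}$ automorphisms giving distinct pullback kernels. Since $\card{X} > 1$, there are rationals $p < q$ with $pf \neq qf$, and one checks that $\ker f$ has a boundary between two of its classes lying at some real point $c$ with $p < c < q$. For each irrational $\tau \in (p,q)$ I would construct $u_{\tau} \in \Aut\Qset$ fixing every rational outside $(p,q)$ and mapping, inside $(p,q)$, the set $(p,c) \cap \Q$ order-isomorphically onto $(p,\tau) \cap \Q$ and $(c,q) \cap \Q$ onto $(\tau,q) \cap \Q$. Such a map exists because each of these four sets is a countable dense linear order without endpoints, hence order-isomorphic to~$\Qset$, and the pieces glue with the identity to an automorphism of~$\Qset$. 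For this~$u_{\tau}$ the transported class has supremum~$\tau$ within $(p,q)$, so the partitions $\ker(u_{\tau}f)$ are pairwise distinct as $\tau$ runs over the $2^{\aleph_{0}}$ irrationals in $(p,q)$. Together with the upper bound this yields exactly $2^{\aleph_{0}}$ many $\scR$\nbd classes.

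The main obstacle is this last step. One must verify carefully both that the glued map really is an order-automorphism of~$\Qset$ and that relocating the single boundary~$c$ to distinct irrational positions genuinely produces distinct kernel partitions, with the supremum of the transported class serving as the invariant that separates them. The homogeneity of~$\Qset$, namely that every open rational interval is itself a copy of~$\Qset$, is precisely what makes the realization of uncountably many boundary positions possible.
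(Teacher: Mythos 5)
Your skeleton for part~(ii) — find $2^{\aleph_{0}}$ elements $\scL$-related to~$f$ with pairwise distinct kernels and invoke Lemma~\ref{lem:classes}(ii) — is the same as the paper's, and part~(i) is fine. But there is a genuine gap in the implementation: you only multiply on the left by \emph{automorphisms}~$u$, and such multiplication can never alter the combinatorial structure of the kernel, since (as you yourself compute) the classes of $\ker(uf)$ are exactly the translates $Ku^{-1}$ of the classes of $\ker f$. In particular, if $f$~is injective — take $f$ to be the identity map, whose $\scD$-class has $\card{X} > 1$ and so falls under part~(ii) — then every class of $\ker f$ is a singleton, hence $\ker(uf)$ is the equality relation for \emph{every} $u \in \Aut\Qset$, and all of your maps $u_{\tau}f$ lie in a single $\scR$-class. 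Your claim that ``$\ker f$ has a boundary at some real point~$c$ with $p < c < q$'' whose transported image has supremum~$\tau$ is exactly what fails here: you can still build the glued automorphisms~$u_{\tau}$, but the transported kernel classes are singletons carrying no memory of~$\tau$, so the partitions $\ker(u_{\tau}f)$ all coincide and the separation step collapses. More generally your construction needs a kernel class with more than one point, placed so that some other class lies above (or, dually, below) it; for non-injective~$f$ this can be arranged with a patch, but the injective case cannot be repaired within your framework at all.

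The paper avoids this by dropping the requirement that the left multiplier be invertible: it constructs pairs $\xi, \eta \in \End\Qset$ with $\eta\xi$ the identity (the same maps as in the proof of Theorem~\ref{thm:idempotents}), so that $f = \eta\xi f$ still gives $Mf = M(\xi f)$ and hence $\scL$-relatedness, while $\xi$ is allowed to be \emph{non-injective}, collapsing an interval onto a point. This fattens the kernel: the class $q(\xi f)^{-1} = I\xi^{-1}$ becomes an interval reaching out to the parameter~$\gamma$, even when $I = qf^{-1}$ is a single point, and varying $\gamma$ over uncountably many reals then yields genuinely distinct kernels for every~$f$ with $\card{X} > 1$, injective or not. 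The idea you are missing is precisely that $\scL$-relatedness requires only a one-sided inverse of the multiplier, and that non-injective multipliers are essential to handle injective endomorphisms.
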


\begin{proof}
  (ii)~Assume that $\card{X} > 1$.  Our argument is similar to that
  which establishes part~(ii) of Theorem~\ref{thm:idempotents} above.
  Indeed, choose $q \in X$ that is not the maximum element of~$X$, put
  $I = qf^{-1}$, \ $\alpha = \inf I$ and $\beta = \sup I$.  Choose
  $\gamma$~to be a suitable real number with $\gamma \geq \beta$ and
  then define maps $\xi$~and~$\eta$ by the same formulae (depending
  upon whether $\beta \in \Q$ and whether $\beta \in I$) as found in
  the proof of Theorem~\ref{thm:idempotents}.  Then, as noted before,
  $\eta\xi$~is the identity map on~$\Q$ and $(\alpha,\gamma) \subseteq
  I\xi^{-1} \subseteq [\alpha,\gamma]$.

  Now the map $\xi f$~is $\scL$\nbd related to~$f$ in view of the
  formula $\eta\xi f = f$.  Note that the kernel class of all points
  in~$\Q$ that map to~$q$ under~$\xi f$ equals $q(\xi f)^{-1} =
  I\xi^{-1}$, whose form is as described above.  Thus as
  $\gamma$~varies, we obtain $2^{\aleph_{0}}$~endomorphisms in the
  $\scD$\nbd class of~$f$ that are not $\scR$\nbd related to each
  other because they have distinct kernels.
\end{proof}

\begin{prop}
  \label{prop:finite->regular}
  Let $f$~be any endomorphism of~$\Qset$.  If\/ $\im f$~is finite,
  then $f$~is regular.
\end{prop}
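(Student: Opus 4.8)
The plan is to exploit the fact that a finite image forces $f$ to have a transparent, piecewise-constant structure, and then to build a one-sided inverse $g$ by hand. First I would record the shape of $f$. Since $\im f = \{q_{1} < q_{2} < \dots < q_{k}\}$ is finite and $f$~is order-preserving, each kernel class $Q_{i} = q_{i}f^{-1}$ is convex (as observed after the definition of convexity in Section~\ref{sec:prelims}), and these classes partition~$\Q$ into finitely many non-empty convex pieces arranged in increasing order, $Q_{1} < Q_{2} < \dots < Q_{k}$. Thus $f$~collapses each~$Q_{i}$ onto the single point~$q_{i}$.

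Next, recall that $f$~is regular precisely when there is some $g \in \End\Qset$ with $fgf = f$. Unwinding this condition using $xf = q_{i}$ for $x \in Q_{i}$, the equation $fgf = f$ holds exactly when $q_{i}gf = q_{i}$ for every~$i$; that is, when $g$~sends each image point~$q_{i}$ back into its own kernel class~$Q_{i}$. Since each~$Q_{i}$ is non-empty (being the kernel class of a point genuinely in the image), I would choose a point $y_{i} \in Q_{i}$ for each~$i$. Because $Q_{1} < Q_{2} < \dots < Q_{k}$, these choices automatically satisfy $y_{1} < y_{2} < \dots < y_{k}$.

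It then remains to extend the assignment $q_{i} \mapsto y_{i}$ to an order-preserving map $g \colon \Q \to \Q$, and this is the only construction required. It is straightforward precisely because the image is finite: a non-decreasing step function suffices. Concretely, I would define $g$ by setting $xg = y_{1}$ for $x < q_{2}$, then $xg = y_{i}$ for $q_{i} \leq x < q_{i+1}$ when $2 \leq i \leq k-1$, and finally $xg = y_{k}$ for $x \geq q_{k}$. This $g$~is non-decreasing, hence an endomorphism of~$\Qset$, and by construction $q_{i}g = y_{i}$ for every~$i$.

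Finally I would verify $fgf = f$ directly: for $x \in Q_{i}$ we have $xf = q_{i}$, then $q_{i}g = y_{i} \in Q_{i}$, and $y_{i}f = q_{i} = xf$, so $xfgf = xf$ for all $x \in \Q$. I do not anticipate a genuine obstacle; the real content of the proposition is the observation that finiteness of the image forces the step-function structure above, after which regularity is immediate. The only mild care needed is to confirm that the step function is order-preserving on all of~$\Q$ (it is, being non-decreasing) and that each~$Q_{i}$ is non-empty so that the points~$y_{i}$ can be chosen.
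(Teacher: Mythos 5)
Your proof is correct and follows the same skeleton as the paper's: both arguments choose one point $y_{i}$ in each kernel class $q_{i}f^{-1}$ and then produce some $g \in \End\Qset$ carrying each image value $q_{i}$ back into its own kernel class, so that $fgf = f$. The only real difference is how $g$ is realised. The paper observes that the tuples $(q_{1},\dots,q_{k})$ and $(y_{1},\dots,y_{k})$ are both increasing and invokes the homogeneity of~$\Qset$ to obtain an \emph{automorphism}~$g$ with $q_{i}g = y_{i}$, whereas you build $g$ by hand as a non-decreasing step function. Your version is slightly more self-contained, since it needs no appeal to the transitivity of~$\Aut\Qset$ on increasing tuples, at the cost of a few extra lines; the paper's is shorter but leans on that standard fact. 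One tiny remark: your step-function formula implicitly assumes $k \geq 2$ (it refers to~$q_{2}$), but the excluded case $k = 1$ is trivial, since a constant map is idempotent and hence regular.
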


\begin{proof}
  Let $X = \im f$ and $x_{1}$,~$x_{2}$, \dots,~$x_{n}$ be the distinct
  image values of~$f$.  Choose $q_{i} \in x_{i}f^{-1}$ for each~$i$.
  There is an automorphism~$g$ of~$\Qset$ satisfying $x_{i}g = q_{i}$
  for each~$i$.  Then $g \in \End\Qset$ and $fgf = f$.  Hence $f$~is
  regular.
\end{proof}

\begin{thm}
  \label{thm:L-classes}
  Let $f$~be an endomorphism of~$\Qset$ and write $X = \im f$.  Then
  \begin{enumerate}
  \item if $X$~is finite, the $\scD$\nbd class of~$f$ contains
    $\aleph_{0}$~many $\scL$\nbd classes;
  \item if $X$~is infinite, the $\scD$\nbd class of~$f$ contains
    $2^{\aleph_{0}}$~many $\scL$\nbd classes.
  \end{enumerate}
\end{thm}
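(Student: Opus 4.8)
The plan is to determine the number of $\scL$\nbd classes in the $\scD$\nbd class $D_{f}$ of $f$ by controlling the \emph{images} that occur among the elements of $D_{f}$. By Lemma~\ref{lem:classes}(i), $\scL$\nbd related endomorphisms have the same image, so two elements of $D_{f}$ with distinct images lie in distinct $\scL$\nbd classes. For the upper bounds I will bound the number of $\scL$\nbd classes directly, and for the lower bounds I will exhibit many elements of $D_{f}$ with pairwise distinct images. The device that lets me prescribe an image while remaining inside $D_{f}$ — crucially even when $f$ is not regular — is the following: if $\eta\in\End\Qset$ is right-invertible, say $\eta\zeta=\mathrm{id}_{\Q}$ with $\zeta\in\End\Qset$, then $g:=f\eta$ satisfies $g\zeta=f\eta\zeta=f$, so that $f\End\Qset=g\End\Qset$ and $g$ is $\scR$\nbd related to $f$; hence $g\in D_{f}$ while $\im g=X\eta$. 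Thus right multiplication by such maps $\eta$ moves the image through $X\eta$ without leaving $D_{f}$.

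For Part~(i), suppose $X$ is finite, say $\card{X}=n$. By Proposition~\ref{prop:finite->regular} the endomorphism $f$ is regular, and therefore so is every element of $D_{f}$; Lemma~\ref{lem:reg-classes}(i) then tells us that elements of $D_{f}$ are $\scL$\nbd related precisely when they have equal image, so the $\scL$\nbd classes of $D_{f}$ are in bijection with the images occurring in $D_{f}$. By Lemma~\ref{lem:reg-classes}(iii) these images are exactly the subsets of $\Q$ order-isomorphic to $X$, and since any two $n$\nbd element chains are order-isomorphic they are precisely the $n$\nbd element subsets of $\Q$. There are $\aleph_{0}$ such subsets, and each genuinely occurs, since given $Y=\{y_{1}<\dots<y_{n}\}$ one obtains an idempotent with image $Y$ by collapsing $\Q$ onto $Y$ along $n$ suitable convex pieces. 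Hence $D_{f}$ contains exactly $\aleph_{0}$ many $\scL$\nbd classes.

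For Part~(ii) the upper bound is immediate: $\card{\End\Qset}\leq\card{\Q}^{\card{\Q}}=2^{\aleph_{0}}$, so $D_{f}$, and with it its family of $\scL$\nbd classes, has at most $2^{\aleph_{0}}$ elements. For the lower bound I produce a continuum of right-invertible post-multipliers of the kind isolated above. A convenient source is the retract family $Z_{t}=\Q\setminus(a,t)$, where $a\in\Q$ is fixed and $t$ ranges over the irrationals with $t>a$: the unique maximal interval $(a,t)$ of $\Q\setminus Z_{t}$ has the rational endpoint $a$ and so is not closed, whence $Z_{t}$ is the image of an idempotent $e_{t}$ by Theorem~\ref{thm:image}; moreover $Z_{t}$ is dense with neither maximum nor minimum, so $Z_{t}\cong\Qset$ and there is an order-isomorphism $\eta_{t}\colon\Q\to Z_{t}$. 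Taking $\zeta_{t}=e_{t}\eta_{t}^{-1}$ gives $\eta_{t}\zeta_{t}=\mathrm{id}_{\Q}$, so each $\eta_{t}$ is right-invertible and $g_{t}:=f\eta_{t}\in D_{f}$ has $\im g_{t}=X\eta_{t}$.

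The genuinely delicate point — and the step I expect to be the main obstacle — is to arrange that the images $X\eta_{t}$ are \emph{pairwise distinct}, rather than merely the ambient sets $Z_{t}$: a priori $X\eta_{t}$ is only some subset of $Z_{t}$ and need not record $t$. Here one exploits that $X$ is infinite and so contains a strictly monotone $\omega$\nbd sequence; by choosing the isomorphism $\eta_{t}$ so that a suitable infinite subset of $X$ is carried into the $t$\nbd dependent half $(t,\infty)$ of $Z_{t}$ with infimum exactly $t$, the parameter $t=\inf\set{x\eta_{t}}{x\in X,\ x\eta_{t}>a}$ becomes recoverable from $X\eta_{t}$, so distinct irrationals $t$ force distinct images. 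The difficulty is that this pinning must be carried out uniformly in the order type of $X$, and the natural construction varies with that type — for instance when $X$ is well-ordered (say $X\cong(\omega,{\leq})$) no image can approach $t$ from above, and one instead realises $2^{\aleph_{0}}$ distinct \emph{cofinal} copies of $X$ in $\Q$, checking in each case that both $\eta$ and its right inverse extend to all of $\Q$. Granting this, the $g_{t}$ provide $2^{\aleph_{0}}$ elements of $D_{f}$ with pairwise distinct images, hence $2^{\aleph_{0}}$ distinct $\scL$\nbd classes; together with the upper bound this shows $D_{f}$ contains exactly $2^{\aleph_{0}}$ many $\scL$\nbd classes.
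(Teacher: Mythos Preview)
Your Part~(i) is essentially the paper's argument. For Part~(ii) you have isolated exactly the device the paper uses --- right-multiply $f$ by some $\eta\in\End\Qset$ admitting a right inverse in $\End\Qset$, so that $f\eta$ is $\scR$\nbd related to $f$ while $\im(f\eta)=X\eta$ --- but you have not executed the step you yourself flag as ``the main obstacle'', and the detour through Theorem~\ref{thm:image} and the sets~$Z_{t}$ does not help with it.

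Concretely: your plan to recover $t$ as $\inf\{x\eta_{t}:x\in X,\ x\eta_{t}>a\}$ only works if some infinite part of~$X$ can be carried coinitially into $(t,\infty)$. Unpacking this, the split point $p_{t}$ of the isomorphism $\eta_{t}\colon\Q\to Z_{t}$ (the preimage of~$a$) is rational, and the condition forces a strictly decreasing sequence in~$X$ to lie in $(p_{t},\infty)$ and be coinitial there --- i.e.\ to have infimum exactly~$p_{t}$. So the construction already presupposes a particular feature of~$X$, and you note yourself that for $X\cong(\omega,{\leq})$ nothing of the sort is available. Your ``cofinal copies'' remark for that case, followed by ``Granting this'', is precisely where the work is deferred. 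That is the gap: the $Z_{t}$ machinery manufactures right-invertible maps but contributes nothing to distinguishing the images, and the case analysis that would do so is only sketched.

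The paper closes the gap with a direct two-case split that bypasses Theorem~\ref{thm:image} entirely. Either (a)~$X$ has an infinite chain of successive maxima $x_{1}>x_{2}>\cdots$: setting $\alpha=\sup\bigl(X\setminus\{x_{1},x_{2},\dots\}\bigr)$, one takes $\xi\in\Aut\Qset$ (automatically right-invertible) sending $(x_{n})$ to an arbitrary strictly decreasing rational sequence $(q_{n})$ in $(\alpha,\infty)$; the image $X\xi$ then determines $\{q_{n}\}$, giving $2^{\aleph_{0}}$ distinct images. Or (b)~after deleting finitely many top elements $x_{1},\dots,x_{n}$ the remainder~$Y$ has no maximum: with $\alpha=\sup Y$ one takes $\xi$ to be an order-isomorphism $(-\infty,\alpha)\to(-\infty,\beta)$ extended by the identity, writes down an explicit right inverse~$\eta$ by hand (so $f\xi\eta=f$), and recovers $\beta=\sup(Y\xi)$ from the image. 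In neither case is any appeal to retracts needed.
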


\begin{proof}
  (i)~First note that if $X$~is finite, then $f$~is regular by
  Proposition~\ref{prop:finite->regular} and, indeed, by
  Lemma~\ref{lem:reg-classes}(iii), another endomorphism~$g$ is
  $\scD$\nbd related to~$f$ if and only if $\card{\im g} = \card{X}$.
  Given two endomorphisms $g$~and~$h$ with images of the same
  cardinality, they are $\scL$\nbd related if and only if their images
  are equal (in addition to being isomorphic).  There are countably
  many choices for a subset of~$\Q$ of a particular finite cardinality
  and hence the $\scD$\nbd class of~$f$ contains countably many
  $\scL$\nbd classes.

  (ii)~Now suppose that $X$~is infinite.  We divide into two cases:
  \begin{enumerate}
    \renewcommand{\labelenumi}{(\alph{enumi})}
  \item Either $X$~contains an infinite sequence~$(x_{n})$ of points
    such that, for each~$n$, \ $x_{n}$~is the maximum member of~$X
    \setminus \{ x_{1}, x_{2}, \dots, x_{n-1} \}$, or
  \item there are finitely many points $x_{1}$,~$x_{2}$,
    \dots,~$x_{n}$ in~$X$ such that $x_{i}$~is the maximum member
    of~$X \setminus \{ x_{1}, x_{2}, \dots, x_{i-1} \}$ for $i \leq
    n$, and such that $X \setminus \{ x_{1},x_{2},\dots,x_{n} \}$~has
    no maximum member.
  \end{enumerate}
  (When $X$~has no maximum element, we are in Case~(b) with $n = 0$.)

  Suppose then that we are in Case~(a).  Put $Y = X \setminus \{
  x_{1},x_{2},\dots \}$ and let $\alpha = \sup Y$.  If $Y$~is empty,
  take $\alpha = -\infty$.  Note then that $\alpha < x_{n} < x_{n-1}$
  for all~$n$.  Now pick any rational number $q_{1} > \alpha$
  and, having chosen $q_{1}$,~$q_{2}$, \dots,~$q_{n-1}$, pick any
  rational number~$q_{n}$ satisfying $\alpha < q_{n} < q_{n-1}$.
  There are $2^{\aleph_{0}}$~many ways of choosing the resulting
  sequence $\mathbf{q} = (q_{n})$.  Now $(\alpha,\infty)$~is order
  isomorphic to~$\Q$ and hence there is an order-preserving
  bijection~$\xi = \xi_{\mathbf{q}}$ from~$(\alpha,\infty)$ to itself
  that maps~$x_{n}$ to~$q_{n}$ for each~$n \in \N$.  Extend this to an
  automorphism~$\xi$ of~$\Qset$ by defining $x\xi = x$ for all $x \leq
  \alpha$.  Then $f\xi$~is $\scR$\nbd related to~$f$ and $\im f\xi =
  X\xi = Y \cup \{ q_{1},q_{2},\dots \}$.  Consequently, by
  Lemma~\ref{lem:classes}(i), all such~$f\xi$ lie in different
  $\scL$\nbd classes and we have established the claimed result in
  this case.

  We now turn to Case~(b).  Put $Y = X \setminus \{
  x_{1},x_{2},\dots,x_{n} \}$ and let $\alpha = \sup Y$.  (In this
  case, necessarily $Y$~is non-empty.)  Note, by assumption, $\alpha
  \notin X$.  Choose any real number~$\beta$ with $\beta < \alpha$.
  Since the intervals $(-\infty,\alpha)$ and $(-\infty,\beta)$ are
  order-isomorphic, there is an order-isomorphism $\theta \colon
  (-\infty,\alpha) \to (-\infty,\beta)$.  Pick any rational
  number~$\gamma$ with $\alpha \leq \gamma \leq x_{n}$.  Then define
  $\xi,\eta \in \End\Qset$ by
  \[
  x\xi = \begin{cases}
    x\theta &\text{if $x < \alpha$} \\
    x &\text{if $x \geq \alpha$}
  \end{cases}
  \]
  and
  \[
  x\eta = \begin{cases}
    x\theta^{-1} &\text{if $x < \beta$} \\
    \gamma &\text{if $\beta \leq x \leq \gamma$} \\
    x &\text{if $x \geq \gamma$}.
  \end{cases}
  \]
  Then $x_{i}\xi\eta = x_{i}$ for $i = 1$,~$2$, \dots,~$n$, since
  each~$x_{i} \geq \gamma$, while $\xi\eta$~is the identity map
  on~$Y$.  We therefore conclude $f\xi\eta = f$.  It follows that
  $f\xi$~and~$f$ are $\scR$\nbd related.  Moreover, $\im f\xi = X\xi =
  Y\theta \cup \{ x_{1},x_{2},\dots,x_{n} \}$ and $\sup Y\theta =
  \beta$.  Hence, as $\beta$~is permitted to vary through $\set{\beta
    \in \R}{\beta < \alpha}$, we obtain $2^{\aleph_{0}}$~endomorphisms
  in the $\scD$\nbd class of~$f$, all of which belong to distinct
  $\scL$\nbd classes by Lemma~\ref{lem:classes}(i).  This completes
  the proof.
\end{proof}

The following result provides a condition that is sufficient for
producing non-regular endomorphisms.  It is phrased in terms of the
infimum and supremum of a subset of~$\Q$.  We remind the reader that
these are well-defined members of~$\R^{\ast}$ and might not
necessarily be rational numbers in general.

\begin{thm}
  \label{thm:nonregular}
  Let $X$~be a subset of\/~$\Q$ with the property that $X$~has a
  partition into two disjoint subsets $X = X_{-} \cup X_{+}$ where
  $X_{-} < X_{+}$ and such that $\alpha = \sup X_{-}$ and $\beta =
  \inf X_{+}$ do not belong to~$X$.  Then there exists a non-regular
  endomorphism~$f$ of\/~$\Qset$ such that the image of~$f$ is
  order-isomorphic to the substructure~$\oset{X}$.
\end{thm}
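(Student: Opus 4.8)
The plan is to realise the required image not as $X$ itself but as a carefully chosen order-isomorphic copy $X'\subseteq\Q$ in which the two halves are separated by a genuine rational number, and then to read off non-regularity from the behaviour of any section of~$f$ at that rational. First note that $\alpha=\sup X_-\notin X$ forces $\oset{X_-}$ to have no maximum and, dually, that $\oset{X_+}$ has no minimum. I would therefore fix order-embeddings of $\oset{X_-}$ into the interval~$(-\infty,0)$ and of $\oset{X_+}$ into~$(0,\infty)$ (every countable linear order embeds in any open interval of~$\Q$), and take $X'_-$,~$X'_+$ to be their images with $X'=X'_-\cup X'_+$. Then $X'\cong\oset X$, while $X'_-<0<X'_+$, so the rational $c=0$ lies strictly between the two halves. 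The point of this manoeuvre is that even when $\alpha=\beta$ is irrational, so that no rational separates $X_-$ from $X_+$ inside~$X$ itself, the copy~$X'$ \emph{does} admit such a separating rational; this is exactly what the non-regularity argument needs.

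Next I would fix an irrational~$r$ and set $A=(-\infty,r)$ and $B=(r,\infty)$, convex sets partitioning~$\Q$ and each order-isomorphic to~$\Qset$. By Theorem~\ref{thm:Kubis} the order $\oset{X_-}$ is isomorphic to the image of an idempotent endomorphism, which is in particular an order-preserving surjection of~$\Q$ onto a copy of~$\oset{X_-}$; composing with order-isomorphisms (and using $A\cong\Qset$) gives an order-preserving surjection $A\to X'_-$, and similarly one obtains $B\to X'_+$. Since $X'_-<X'_+$ and $A<B$, gluing these maps defines an endomorphism $f\in\End\Qset$ with $\im f=X'\cong\oset X$, and by construction $X'_-f^{-1}=A$ and $X'_+f^{-1}=B$. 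I keep the split of~$\Q$ into~$A$ and~$B$ at an irrational point precisely so that $\sup A=r\notin\Q$.

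To prove that $f$ is non-regular, suppose for a contradiction that $fgf=f$ for some $g\in\End\Qset$. For each $y\in X'$ we then have $(yg)f=y$, so the restriction $\sigma=g|_{X'}$ is an order-preserving map with $y\sigma\in yf^{-1}$ for all~$y$. The key observation is that $\sup\set{x\sigma}{x\in X'_-}=\sup A=r$: given any rational $s<r$ we have $sf\in X'_-$, and choosing $x\in X'_-$ with $x>sf$ (possible since $X'_-$ has no maximum) forces $xf^{-1}\subseteq(s,r)$ and hence $x\sigma>s$. Dually $\inf\set{x\sigma}{x\in X'_+}=r$. Since $g$ is order-preserving and $X'_-<c<X'_+$ with $c=0$, we get $x\sigma\leq cg$ for every $x\in X'_-$ and $cg\leq x\sigma$ for every $x\in X'_+$; combining these with the supremum and infimum just computed gives $cg=r$. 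This is absurd, since $cg\in\Q$ but $r\notin\Q$. Hence no such~$g$ exists and $f$ is non-regular. The main obstacle, and the reason for replacing~$X$ by the copy~$X'$, is precisely this final step: the argument collapses unless a genuine rational~$c$ is trapped strictly between the two halves of the image, a point on which any~$g$ with $fgf=f$ would be forced to take the irrational value~$r$.
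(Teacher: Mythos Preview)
Your argument is correct. The overall strategy matches the paper's---realise the image so that a rational point is trapped strictly between the two halves, and then show that any $g$ with $fgf=f$ would be forced to send that rational to an irrational value---but the construction and the final contradiction are arranged differently. The paper first uses Kubi\'{s} to get an endomorphism~$g$ with image (a copy of)~$X$, then reduces to $\alpha=\beta$ and post-composes with a map~$\xi$ that shifts~$X_{-}$ leftward, creating a rational gap~$(\delta,\alpha)$; for a rational~$q$ in that gap and a putative~$h$ with $fhf=f$, it squeezes~$qhg$ to obtain the contradictory pair $qhg>\alpha$ and $qhg<\alpha$. You instead fix the image copy~$X'$ from the outset so that $c=0$ lies between~$X'_{-}$ and~$X'_{+}$, and independently arrange the \emph{preimages} to meet at a chosen irrational~$r$ by gluing two Kubi\'{s} retracts on $A=(-\infty,r)$ and $B=(r,\infty)$; the contradiction is then the single equation $cg=r\notin\Q$. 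Your version makes the two ingredients---a rational separator in the image, an irrational separator in the domain---explicit and independent, which yields a slightly cleaner endgame; the paper's version keeps closer to the original set~$X$ and obtains the gap by a post-composition, at the cost of a more involved limiting argument with converging sequences.
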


\begin{proof}
  We make a number of reductions.  The first is to observe that we can
  assume that $X$~is the image of an endomorphism of~$\Qset$.  Indeed,
  by Theorem~\ref{thm:Kubis}, there is an (idempotent)
  endomorphism~$g$ of~$\Qset$ with image isomorphic to~$X$.  Write $Y
  = \im g$ and denote by $\phi$~the order-isomorphism from~$\oset{X}$
  to~$\oset{Y}$.  Let $Y_{-} = X_{-}\phi$ and $Y_{+} = X_{+}\phi$.
  Put $\gamma = \sup Y_{-}$.  If it were the case that $\gamma \in Y$,
  then $\gamma = x\phi$ for some $x \in X$.  This element~$x$ cannot
  be a member of~$X_{-}$, since $x$~would then be the maximum element
  of~$X_{-}$, contradicting the assuming that $\sup X_{-} \notin X$.
  Consequently, $x \in X_{+}$ and by assumption $\inf X_{+} < x$.  In
  particular, there exists some $y \in X_{+}$ with $\inf X_{+} < y <
  x$.  Then $y\phi$~is a point in~$Y_{+}$ satisfying $z < y\phi <
  \gamma$ for all $z \in Y_{-}$, which contradicts the definition
  of~$\gamma$ as the supremum of~$Y_{-}$.  We conclude, by symmetry,
  that neither $\sup Y_{-}$~nor~$\inf Y_{+}$ belong to~$Y$.  In
  conclusion, we can now replace~$X$ by~$Y$ and hence assume that
  $X$~is the image of the endomorphism~$g$.

  Our second reduction is to show that we can assume $\alpha = \beta$.
  Indeed, there is an order-isomorphism $\theta \colon (\beta,\infty)
  \to (\alpha,\infty)$ and we can define a new endomorphism~$g'$
  of~$\Qset$ by
  \[
  xg' = \begin{cases}
    xg &\text{if $xg \in X_{-}$} \\
    xg\theta &\text{if $xg \in X_{+}$}.
  \end{cases}
  \]
  The image of~$g'$ is order-isomorphic to~$X$ and is the disjoint
  union of~$X_{-}$ and~$X_{+}\theta$.  The infimum of~$X_{+}\theta$ is
  also~$\alpha$.  We may therefore replace~$g$ by the
  endomorphism~$g'$ and hence assume that $\alpha = \beta$.

  In summary, there is an endomorphism~$g$ of~$\Qset$ such that the
  image $\im g = X$ is a disjoint union $X = X_{-} \cup X_{+}$ with
  $X_{-} < X_{+}$ and $\sup X_{-} = \inf X_{+} = \alpha \notin X$.
  Pick any real number~$\delta < \alpha$.  There is an
  order-isomorphism $\xi \colon (-\infty,\alpha) \to (-\infty,\delta)$
  and we extend this to an endomorphism of~$\Qset$ by also defining
  $x\xi = x$ for all $x \geq \alpha$.  As $\xi$~is an order-embedding,
  we conclude that $g\xi \in \End\Qset$ and $\im (g\xi) = X\xi \cong
  X$.  We shall show that $f = g\xi$~is not regular.

  Suppose that $h$~is an endomorphism of~$\Qset$ with the property
  that $fhf = f$.  Since $\alpha = \sup X_{-} = \inf X_{+}$, there
  exist sequences $(x_{i})$~and~$(y_{i})$ in $X_{-}$~and~$X_{+}$,
  respectively, converging to~$\alpha$.  As $\xi$~is an
  order-isomorphism from~$(-\infty,\alpha)$ to~$(-\infty,\delta)$, we
  conclude that the sequence~$(x_{i}\xi)$ converges to~$\delta$.  Pick
  $q \in \Q$ with $\delta < q < \alpha$.  There is a
  sequence~$(q_{i})$ in~$\Q$ with $q_{i}g = x_{i}$ for each~$i$.  Now
  $q_{i}f = x_{i}\xi < \delta < q$, so $x_{i}\xi = q_{i}f = q_{i}fhf
  \leq qhf$ for each~$i$.  As $(x_{i}\xi)$~converges to~$\delta$, we
  conclude that $qhg\xi = qhf \geq \delta$.  From the definition
  of~$\xi$ and the fact that $\alpha \notin \im g = X$, we conclude
  $qhg > \alpha$.

  Similarly, there is a sequence~$(r_{i})$ in~$\Q$ with $r_{i}g =
  y_{i}$ for each~$i$.  Now $r_{i}f = y_{i}\xi > \delta > q$, so
  $y_{i} = y_{i}\xi = r_{i}f = r_{i}fhf \geq qhf$ for each~$i$.  The
  convergence of~$(y_{i})$ to~$\alpha$, allows us to conclude $qhf
  \leq \alpha$.  The definition of~$\xi$ forces $qhg < \alpha$.

  Comparing the conclusions of the last two paragraphs, we now have a
  contradiction and hence have established that $f$~is indeed not
  regular.
\end{proof}

Observe that if $X$~is a subset of~$\Q$ containing a dense interval,
then it satisfies the hypotheses of Theorem~\ref{thm:nonregular} since
we can choose an irrational number~$\alpha$ in the corresponding real
interval and then partition~$X$ into $X_{-} = \set{x \in X}{x <
  \alpha}$ and $X_{+} = \set{x \in X}{x > \alpha}$.  On the other
hand, if $X \cong (\N,\leq)$, then by a similar argument to
Proposition~\ref{prop:finite->regular} any endomorphism~$f$ with $\im
f \cong \N$ is regular.

\begin{cor}
  \label{cor:uc-nonreg}
  There are $2^{\aleph_{0}}$~non-regular $\scD$\nbd classes
  in~$\End\Qset$.
\end{cor}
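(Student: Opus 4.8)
The plan is to feed the family of pairwise non-isomorphic ordered sets~$\C_{\x}$ from Proposition~\ref{prop:uc-Cx} into Theorem~\ref{thm:nonregular}. The first point to establish is that each~$\C_{\x}$ carries a \emph{gap}. Fix any irrational~$\alpha$ and partition $C_{\x} = A \cup B$ according to whether the first coordinate~$x_{n}$ of a point~$(x_{n},i)$ satisfies $x_{n} < \alpha$ or $x_{n} > \alpha$. Then $A < B$, and since there is no largest rational below~$\alpha$ and no smallest rational above~$\alpha$, the part~$A$ has no maximum element and~$B$ has no minimum element. This is a property of the order type, so it passes to any order-isomorphic copy of~$\C_{\x}$.

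Next, for each enumeration~$\x$ in the set~$P$ of size~$2^{\aleph_{0}}$ provided by Proposition~\ref{prop:uc-Cx}, I would use Theorem~\ref{thm:Kubis} to realise~$\C_{\x}$ as a subset~$X_{\x} \subseteq \Q$ with $\oset{X_{\x}} \cong \C_{\x}$. Transporting the gap gives a partition $X_{\x} = X_{-} \cup X_{+}$ with $X_{-} < X_{+}$, where $X_{-}$ has no maximum and $X_{+}$ has no minimum. I then verify the hypotheses of Theorem~\ref{thm:nonregular}: as $X_{-}$ has no maximum, $\sup X_{-} \notin X_{-}$; and if $\sup X_{-}$ lay in~$X_{+}$ it would be the minimum of~$X_{+}$, which~$X_{+}$ lacks, so $\sup X_{-} \notin X_{\x}$, and dually $\inf X_{+} \notin X_{\x}$. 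Theorem~\ref{thm:nonregular} then produces a non-regular endomorphism~$f_{\x}$ of~$\Qset$ with $\im f_{\x} \cong \oset{X_{\x}} \cong \C_{\x}$.

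It remains to count. For distinct $\x,\y \in P$ we have $\C_{\x} \not\cong \C_{\y}$, hence $\im f_{\x} \not\cong \im f_{\y}$, and so $f_{\x}$ and $f_{\y}$ lie in different $\scD$\nbd classes by Lemma~\ref{lem:classes}(iii). Since regularity is constant on each $\scD$\nbd class \cite[Proposition~2.3.1]{Howie}, the $\scD$\nbd class of the non-regular element~$f_{\x}$ is itself non-regular. Thus the endomorphisms~$f_{\x}$, for $\x \in P$, witness $2^{\aleph_{0}}$ distinct non-regular $\scD$\nbd classes. I expect the only delicate step to be the bookkeeping in the middle paragraph, namely translating the intrinsic gap of~$\C_{\x}$ into the exact conditions on $\sup X_{-}$ and $\inf X_{+}$ required by Theorem~\ref{thm:nonregular}; the distinctness and the count then follow immediately from the non-isomorphism of the~$\C_{\x}$.
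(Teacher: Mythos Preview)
Your proof is correct and follows the same overall strategy as the paper's --- use the family~$\C_{\x}$ from Proposition~\ref{prop:uc-Cx}, feed it into Theorem~\ref{thm:nonregular}, and separate the resulting $\scD$\nbd classes via Lemma~\ref{lem:classes}(iii) --- but the execution differs in a useful way.

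The paper does not look for a gap inside~$\C_{\x}$ itself. Instead it adjoins a dense interval: it embeds~$\C_{\x}$ into~$(2,\infty)$ as~$D_{\x}$ and sets $X_{\x} = (0,1) \cup D_{\x}$, so the gap is manufactured at~$1/\surd2$ inside the attached copy of~$\Q$. Non-isomorphism of the~$X_{\x}$ then requires the extra machinery of Proposition~\ref{prop:X+C}(ii) (since $X_{\x} \cong \Qset + \C_{\x}$). Your route is more economical: you observe directly that~$\C_{\x}$ already carries a gap (split at an irrational first coordinate), so you can take $X_{\x} \cong \C_{\x}$ and invoke Proposition~\ref{prop:uc-Cx} alone for the non-isomorphism, bypassing Proposition~\ref{prop:X+C} entirely. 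The bookkeeping you flagged --- showing $\sup X_{-}, \inf X_{+} \notin X_{\x}$ --- is handled correctly. One small remark: citing Theorem~\ref{thm:Kubis} to realise~$\C_{\x}$ inside~$\Q$ is more than you need; any order-embedding of a countable linear order into~$\Qset$ (Cantor) suffices, since Theorem~\ref{thm:nonregular} only asks for a subset of~$\Q$, not a retract.
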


\begin{proof}
  We again make use of the ordered sets~$\C_{\x}$ constructed
  earlier.  Let $P$~be the set of enumerations of~$\Q$ provided by
  Proposition~\ref{prop:uc-Cx}.  If $\x \in P$, it is possible to
  embed a copy of~$\C_{\x}$ as a subset~$D_{\x}$ of~$(2,\infty)$, as
  $(2,\infty)$~is order-isomorphic to~$\Q$.  Then take $X_{\x} = (0,1)
  \cup D_{\x}$.  As this set contains an interval, it satisfies the
  hypotheses of Theorem~\ref{thm:nonregular} with, for example,
  $\alpha = \beta = 1/\sqrt{2}$ and so there exists a non-regular
  endomorphism~$f_{\x}$ of~$\Qset$ with image isomorphic to~$X_{\x}$.

  Now if $\x$~and~$\y$ are distinct enumerations in~$P$, then $X_{\x}
  \not\cong X_{\y}$ by Proposition~\ref{prop:X+C} combined with the
  property of~$P$.  Hence $f_{\x}$~and~$f_{\y}$ are not $\scD$\nbd
  related by Lemma~\ref{lem:classes}(iii).  Thus we do indeed have
  $2^{\aleph_{0}}$~non-regular $\scD$\nbd classes of endomorphisms
  of~$\Qset$.
\end{proof}

\section{Images of idempotent transformations
  (Theorem~\ref{thm:image})}
\label{sec:image}

We shall now establish Theorem~\ref{thm:image}, namely that a
subset~$X$ of~$\Q$ arises as the image of an idempotent endomorphism
of~$\Qset$ if and only if no maximal interval within the complement
of~$X$ is closed.

\spc

First let $f$~be an idempotent endomorphism of the linearly ordered
set of rational numbers~$(\Q,{\leq})$.  In order to describe the image
of~$f$ as a subset of~$\Q$ we shall consider the various
preimages~$xf^{-1}$ of $x \in \Q$.  Note that $xf^{-1}$~is empty if
$x$~is not in the image of~$f$, while $x \in xf^{-1}$ for all $x \in
\im f$ because $f$~is idempotent.  Define
\[
J = \set{x \in \im f}{\card{xf^{-1}} > 1}.
\]
When $f$~is the identity map, $J = \emptyset$ and $\im f = \Q$.  For
all other idempotent endomorphisms~$f$, \ $J$~is non-empty and $\im
f$~is a proper subset of~$\Q$.  For the following analysis, we shall
assume $f$~is not the identity.

For each $x \in J$, we shall define below two (possibly empty)
intervals $L_{x}$~and~$U_{x}$ in~$\Q$.  The definition will depend
upon the infimum and supremum of the preimage set~$xf^{-1}$.  If
$\inf(xf^{-1}) \neq -\infty$, then the set $\set{q \in \im f}{q < x}$
is non-empty.  When this set is non-empty \emph{and} has a maximum
member, we shall define
\[
m_{x} = \max \set{q \in \im f}{q < x}.
\]
Dually, we define
\[
n_{x} = \min \set{q \in \im f}{q > x}
\]
when this minimum element exists.  These two values, when they exist,
will contribute to the definition of the intervals
$L_{x}$~and~$U_{x}$, as follows:

\begin{enumerate}
\item If $xf^{-1}$~is not bounded below, then we set $L_{x} =
  (-\infty,x)$.
\item If $\inf(xf^{-1}) \in \Q$ is an image value of~$f$, then we set
  $L_{x} = (\inf(xf^{-1}),x)$.  (This interval is empty in the case
  when $\inf(xf^{-1}) = x$.)
\item If $\inf(xf^{-1}) \in \R \setminus (\im f)$ and $m_{x} = \max
  \set{ q \in \im f }{ q < x }$ exists, then set $L_{x} = (m_{x},x)$.
\item Otherwise, set $L_{x} = [ \inf(xf^{-1}), x )$.
\end{enumerate}

We make a dual set of definitions for~$U_{x}$:
\begin{enumerate}
\item If $xf^{-1}$~is unbounded above, then we set $U_{x} = (x,+\infty)$.
\item If $\sup(xf^{-1}) \in \Q$ is an image value of~$f$, then we set
  $U_{x} = (x,\sup(xf^{-1}))$.  (This interval is empty in the case
  when $\sup(xf^{-1}) = x$.)
\item If $\sup(xf^{-1}) \in \R \setminus (\im f)$ and $n_{x} = \min
  \set{q \in \im f}{q > x}$ exists, then set $U_{x} = (x,n_{x})$.
\item Otherwise, set $U_{x} = (x,\sup(xf^{-1})]$.
\end{enumerate}

\begin{lemma}
  \label{lem:LU-nonclosed}
  Let $x \in \im f$ be such that\/ $\card{xf^{-1}} > 1$.  The
  intervals $L_{x}$~and~$U_{x}$ are either empty or are intervals
  in~$\Q$ that are not closed and are disjoint from the image of~$f$.
\end{lemma}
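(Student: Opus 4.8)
The plan is to verify the lemma by a case analysis that mirrors the four-case definitions of $L_x$ and $U_x$ given immediately above the statement. By duality I will treat only $L_x$ explicitly; the argument for $U_x$ is symmetric. I must check two things for each case: first, that $L_x$ is not closed as an interval in~$\Q$ (in the induced topology), and second, that $L_x$ is disjoint from~$\im f$. For the first point, I will use the description recorded earlier in the excerpt, namely that an interval in~$\Q$ is closed precisely when it can be written as $[p,q]$ with $p,q \in \R \cup \{\pm\infty\}$, and that an open interval $(p,q)$ is closed exactly when both endpoints are irrational.

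\textbf{Handling the four cases for $L_x$.} In Case~(i), $L_x = (-\infty,x)$ with $x \in \Q$; since the right endpoint is rational, this interval is not closed, and every point $q < x$ in this interval lies in the preimage $xf^{-1}$, hence maps to~$x \neq q$, so $q \notin \im f$. In Cases~(ii) and~(iii), $L_x = (m_x,x)$ with $x \in \Q$ rational, so again the interval fails to be closed because its right endpoint is rational; for disjointness I will argue that every $q$ strictly between $m_x$ and $x$ satisfies $q \notin \im f$, using that $m_x = \max\set{q \in \im f}{q < x}$ is the largest image value below~$x$, so no image value can lie in the open interval $(m_x,x)$. Case~(iv) is the delicate one: here $L_x = [\inf(xf^{-1}),x)$, a half-open interval with rational right endpoint~$x$, so it is not closed; the issue is the left endpoint $\inf(xf^{-1})$, which by the case hypothesis is not an image value, together with the fact that no image value lies in the open interval $(\inf(xf^{-1}),x)$, since such a value would map to itself yet lies inside the preimage of~$x$.

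\textbf{The main obstacle.} I expect the principal subtlety to be confirming disjointness from~$\im f$ in Case~(iv), where the left endpoint $\inf(xf^{-1})$ is actually included in~$L_x$. I must check that $\inf(xf^{-1})$ itself is not an image value — but this is precisely the standing hypothesis of Case~(iv) — and that no image value lies strictly above it and below~$x$. For the latter, any $q$ with $\inf(xf^{-1}) < q < x$ belongs to the preimage $xf^{-1}$ (which contains the interval $(\inf(xf^{-1}),x]$ by the reasoning leading to the case), so $qf = x \neq q$ and hence $q \notin \im f$. The convergent sequence $(q_i)$ of image values approaching $\inf(xf^{-1})$ from below, constructed in the paragraph defining Case~(iv), confirms that $\inf(xf^{-1})$ is a genuine limit point of $\im f$ from the left but is not itself attained, consistent with its exclusion from~$\im f$.

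\textbf{Concluding.} Once all four cases for $L_x$ are dispatched, together with the dual cases for $U_x$, both claimed properties hold in every case, and the empty-interval possibility is permitted by the statement. The central observations throughout are that any point in the open part of $L_x$ or $U_x$ lies in $xf^{-1}$ and so maps to~$x$, forcing it out of $\im f$, and that the rational right endpoint~$x$ of each $L_x$ (dually, the rational left endpoint of each $U_x$) prevents the interval from being closed in~$\Q$.
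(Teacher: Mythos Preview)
Your proposal is correct and follows essentially the same approach as the paper's proof: both use the rational endpoint~$x \notin L_{x}$ to show non-closedness in a single stroke, and both establish disjointness from~$\im f$ by combining the observation that points of~$(\inf(xf^{-1}),x)$ lie in~$xf^{-1}$ (hence map to~$x$, hence cannot be fixed by the idempotent~$f$) with the defining property of~$m_{x}$. The only cosmetic difference is in the grouping of cases for disjointness: the paper bundles Cases~(i),~(ii),~(iv) together via the preimage argument and treats Case~(iii) separately via~$m_{x}$, whereas you bundle Cases~(ii),~(iii) together via~$m_{x}$ and treat Cases~(i),~(iv) via the preimage argument --- either partition works.
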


\begin{proof}
  We consider the interval~$L_{x}$ in the case when it is non-empty,
  since the argument for~$U_{x}$ is analogous.  Since one endpoint is
  $x \in \Q$ and $x \in \Q \setminus L_{x}$, we see that $L_{x}$~is
  not closed.  We shall now show that it cannot contain a point in the
  image of~$f$.

  As $f$~is order-preserving, we know that $qf = x$ for all~$q \in
  (\inf(xf^{-1}),x)$.  In particular, no point in~$(\inf(xf^{-1}),x)$
  lies in the image of the idempotent map~$f$.  In particular, this
  tells us that $L_{x}$~does not meet the image of~$f$ in Cases~(i),
  (ii) or~(iv) of its definition.

  Finally, in Case~(iii), $m_{x}$~is the maximum element of the image
  of~$f$ satisfying $m_{x} < x$.  Consequently, $L_{x} =
  (m_{x},x)$~again does not meet~$\im f$.
\end{proof}

\begin{lemma}
  \label{lem:LU-maxintervals}
  Let $x \in \im f$ be such that\/ $\card{xf^{-1}} > 1$.  If
    $L_{x}$~or~$U_{x}$ is non-empty, then it is a maximal interval
    within $\Q \setminus ( \im f )$.
\end{lemma}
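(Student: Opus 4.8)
The plan is to show that each non-empty $L_x$ coincides with the equivalence class of the relation~$\sim$ (on the set $\Q \setminus (\im f)$) that contains it, and symmetrically for~$U_x$. By Lemma~\ref{lem:LU-nonclosed}, $L_x$~is an interval disjoint from~$\im f$, so it is contained in~$\Q \setminus (\im f)$ and therefore, using the observation that every interval inside $\Q \setminus (\im f)$ lies within a single maximal interval, it is contained in some maximal interval~$E$. To prove $L_x = E$, I would argue that $L_x$ cannot be enlarged to a strictly larger interval inside~$\Q \setminus (\im f)$: it suffices to check that at each end of~$L_x$ we are blocked, either by reaching~$\pm\infty$ or by the presence of image values of~$f$ immediately outside~$L_x$, which would prevent any outside point from being $\sim$\nbd related to a point of~$L_x$.

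The upper end is handled uniformly. Since $x \in J \subseteq \im f$ and $\sup L_x = x \notin L_x$, any point~$p > x$ is separated from every $q \in L_x$ by~$x$ itself, which lies in~$\im f$ and hence not in $\Q \setminus (\im f)$; thus no point exceeding~$x$ can be $\sim$\nbd related to a point of~$L_x$, and so~$E$ has no point above~$x$. For the lower end I would treat the four cases of the definition of~$L_x$ in turn. In Case~(i) we have $L_x = (-\infty,x)$ and there is nothing below to add. In Cases~(ii) and~(iii) the infimum of~$L_x$ is the rational number~$m_x$, which is an image value of~$f$; exactly as for~$x$, the point~$m_x$ lies in~$\im f$ and separates~$L_x$ from everything below it, so $E$~cannot extend past~$m_x$. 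In each of these three cases it follows that $E = L_x$.

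The remaining Case~(iv) is the delicate one and I expect it to be the main obstacle, since here there is no single image value immediately below~$L_x$ to act as a barrier. In this case $\inf(xf^{-1})$ is not an image value, so it belongs to~$\Q \setminus (\im f)$ and is the (closed) left endpoint of~$L_x = [\inf(xf^{-1}),x)$. Instead of a barrier point I would invoke the monotonic increasing sequence~$(q_i)$ of image values converging to~$\inf(xf^{-1})$ that was produced in the definition of this case: given any $p < \inf(xf^{-1})$, convergence supplies some~$q_i$ with $p < q_i < \inf(xf^{-1})$, and since $q_i \in \im f$ the interval between~$p$ and the left endpoint of~$L_x$ meets~$\im f$, so $p \not\sim q$ for $q \in L_x$. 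Hence $E$ acquires no point below~$\inf(xf^{-1})$ either, giving $E = L_x$. The argument for~$U_x$ is the mirror image, using the minimal image value~$n_x$ above~$x$ in the analogues of Cases~(ii) and~(iii) and a decreasing sequence of image values converging to~$\sup(xf^{-1})$ in the analogue of Case~(iv).
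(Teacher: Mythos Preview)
Your argument is correct and follows essentially the same route as the paper's proof: both handle the upper end via $x \in \im f$, Cases~(i)--(iii) via the endpoint $m_{x} \in \im f$ (or~$-\infty$), and Case~(iv) via the monotone sequence of image values converging to~$\inf(xf^{-1})$ from below. One small inaccuracy: in Case~(iv) you assert that $\inf(xf^{-1})$ ``belongs to~$\Q \setminus (\im f)$,'' but $\inf(xf^{-1})$ need not be rational at all (the paper allows it to lie in~$\R^{\ast}$); fortunately your actual blocking argument only uses rational $p < \inf(xf^{-1})$ and the sequence~$(q_{i})$, so it goes through unchanged once that parenthetical remark is dropped.
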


\begin{proof}
  We deal with~$L_{x}$ and consider each of the Cases~(i)--(iv) above
  in the definition of this interval.  The result for~$U_{x}$ is
  established by a dual argument.  First note that, by
  Lemma~\ref{lem:LU-nonclosed}, $L_{x} \subseteq \Q \setminus ( \im f
  )$.  We shall show that $L_{x}$~is a maximal interval in this
  complement.

  In Case~(i), \ $L_{x} = (-\infty,x)$ and the endpoint~$x$ belongs
  to the image of~$f$.  In Case~(ii), \ $L_{x} = (\inf(xf^{-1}),x)$
  and the endpoints $\inf(xf^{-1})$~and~$x$ both belong to~$\im f$
  (the former by assumption).  In Case~(iii), \ $L_{x} = (m_{x},x)$
  and the endpoints $m_{x}$~and~$x$ both belong to~$\im f$.  Hence, in
  these three cases, $L_{x}$~is a maximal interval within~$\Q
  \setminus ( \im f )$.

  In Case~(iv), \ $L_{x} = [\inf(xf^{-1}),x)$ where $\inf(xf^{-1})
  \notin \im f$ and $M = \set{q \in \im f}{q < x}$ has no maximum
  element.  Note that $\inf(xf^{-1}) < x$ since the former is not an
  image value of~$f$ in this case.  Suppose there exists some
  interval~$I$ contained within $\Q \setminus (\im f)$ that strictly
  contains~$L_{x}$.  Note that the endpoint~$x$ of~$L_{x}$ belongs to
  the image of~$f$, so such~$I$ must contain some $r \in \Q$ with $r <
  \inf (xf^{-1})$.  It then follows that $rf < x$; that is, $rf \in
  M$.  As $M$~has no maximum element, we conclude that some $s \in \im
  f$ satisfies $rf < s < x$.  This element~$s$ does not belong to the
  interval~$I$ and hence $rf < s < r$.  Then $s = sf \leq rf$, which
  is a contradiction.  Hence no such interval~$I$ exists and we
  conclude that $L_{x}$~is indeed a maximal interval within $\Q
  \setminus (\im f)$ in this final case.  This completes the proof.
\end{proof}

\begin{proofof}{Theorem~\ref{thm:image}}
  Let $f$~be any idempotent endomorphism of~$(\Q,{\leq})$, let
  \[
  J = \set{x \in \im f}{\card{xf^{-1}} > 1},
  \]
  and for each~$x \in J$, define the sets $L_{x}$~and~$U_{x}$ as
  described above.  Our first step will be to observe that every point
  in the complement of the image of~$f$ belongs to at least one of the
  sets $L_{x}$~or~$U_{x}$ for some $x \in J$.

  Let $q \in \Q \setminus ( \im f )$ and put $x = qf$.  Then
  $xf^{-1}$~contains~$q$, so by definition $x \in J$.  Now either $q <
  x$ or $q > x$.  We shall consider the case when $q < x$.  The
  definition tells us that $\inf(xf^{-1}) \leq q$.  We now analyse the
  definition of~$L_{x}$ and split into Cases~(i)--(iv) as above.  In
  Case~(i), $\inf(xf^{-1}) = -\infty < q$ and so $q \in L_{x} =
  (-\infty,x)$.  In Case (ii), the assumption that $\im f$~is an image
  value ensures $\inf(xf^{-1}) < q$, so $q \in L_{x} =
  (\inf(xf^{-1}),x)$.  In Case~(iii), the facts that $f$~is an
  order-preserving idempotent, $qf = x$ and $m_{x}$~is the maximum
  image value satisfying $m_{x} < x$ implies that $m_{x} < q$ and so
  $q \in L_{x} = (m_{x},x)$.  Finally, in Case~(iv), we already know
  that $\inf(xf^{-1}) \leq q$, so $q \in L_{x} = [\inf(xf^{-1}),x)$.
  Similarly, if $q > x$, then $q \in U_{x}$.  In conclusion, every
  point~$q$ not in the image of~$f$ lies in either $L_{x}$~or~$U_{x}$
  with~$x = qf \in J$.

  We have already observed, in Lemma~\ref{lem:LU-maxintervals}, that
  the sets $L_{x}$~and~$U_{x}$ are maximal intervals in~$\Q \setminus
  ( \im f )$ and it now follows, from the previous paragraph, that
  these sets are \emph{all} the maximal intervals in~$\Q \setminus (
  \im f )$.  We have observed that these sets are not closed in
  Lemma~\ref{lem:LU-nonclosed}.  This establishes the necessity part
  of Theorem~\ref{thm:image}.

  \spc

  Conversely, suppose that $X$~is a subset of~$\Q$ and that $\Q
  \setminus X = \bigcup_{i \in I} T_{i}$, where the sets~$T_{i}$, for
  $i \in I$, are the maximal intervals in~$\Q \setminus X$.  Assume
  that the~$T_{i}$ are not closed.  We define a map $f \colon \Q \to
  \Q$ as follows.

  Consider one of the intervals~$T_{i}$.  Since it cannot be expressed
  as a closed interval with endpoints $q,r \in \R \cup
  \{\pm\infty\}$, it has one of the following forms:
  \begin{enumerate}
  \item $T_{i} = [q,r)$ for some $q$~and~$r$ with necessarily $r \in
    \Q$.  In this case, define $xf = r$ for all $x \in [q,r)$.
    \item $T_{i} = (q,r]$ for some $q$~and~$r$ with necessarily $q \in
    \Q$.  In this case, define $xf = q$ for all $x \in (q,r]$.
  \item $T_{i} = (q,r)$ for some $q$~and~$r$.  Note that at least one
    of $q$~or~$r$ is rational, since otherwise we could write $T_{i} =
    [q,r]$ contrary to the assumption that $T_{i}$~is not a closed
    interval.  We then define $f$~on this interval depending upon
    which endpoint is rational:
    \[
    xf = \begin{cases}
      q &\text{when $q \in \Q$} \\
      r &\text{when $q \notin \Q$}
    \end{cases}
    \]
    for all $x \in (q,r)$.
\end{enumerate}
Finally define $xf = x$ for all $x \in X$.  In this way, we have
defined~$f$ on the whole set~$\Q$.  To verify that $f$~is an
idempotent endomorphism of~$\Qset$ with image equal to~$X$, we now
proceed as follows.

First, if $T_{i} = [q,r)$~is a maximal interval in~$\Q \setminus X$
with $r \in \Q$, then $r$~cannot belong to another maximal
interval~$T_{j}$ (as otherwise $T_{i} \cup T_{j}$ would be a larger
interval in~$\Q \setminus X$).  Hence $r$~belongs to the set~$X$.
Similar arguments apply to the other cases in the definition of~$f$,
so we conclude that $\im f = X$.  As a consequence, since $xf = x$ for
all $x \in X$, it now follows that $f$~is idempotent.

Finally, we observe that $f$~is an endomorphism of~$\Qset$.  Let $x,y
\in \Q$ satisfy $x < y$.  When $x,y \in X$, there is nothing to
establish since $xf = x$ and $yf = y$.  Suppose that $x \in T_{i}$ for
some~$i$ and that $y \in X$.  Let the endpoints of~$T_{i}$ be
$q$~and~$r$ with $q < r$.  Then necessarily $q < r \leq y$.  Our
definition for~$f$ states that $xf$~equals one of $q$~or~$r$.  Either
way, we know $xf \leq r \leq y = yf$.  A similar argument applies when
$x \in X$ and $y \in T_{i}$ for some~$i$.

The remaining case is when both $x$~and~$y$ lie in one of the maximal
intervals~$T_{i}$.  If they lie in the \emph{same} maximal interval,
then $xf = yf$.  If, say, $x \in T_{i}$ and $y \in T_{j}$ with $i \neq
j$, let the endpoints of~$T_{i}$ and of~$T_{j}$ be $q_{1},r_{1}$ and
$q_{2},r_{2}$, respectively.  Then $q_{1} < r_{1} \leq q_{2} <
r_{2}$.  The definition of~$f$ tells us $xf \in \{ q_{1},r_{1} \}$ and
$yf \in \{ q_{2},r_{2} \}$ and $xf \leq yf$ follows.  Hence $f$~is
indeed an idempotent endomorphism of~$\Qset$ with image equal to the
set~$X$.

This completes the proof of Theorem~\ref{thm:image}.
\end{proofof}

\section{Countable automorphism groups of countable linearly ordered
  structures (Theorem~\ref{thm:countableautoms})}
\label{sec:countable}

Let $\Omega = (V,\leq)$ where $V$~is a countable set and $\leq$~is a
linear order on~$V$.  Throughout this section, we assume that
$\Aut\Omega$~is a countable group.  Our goal in this section is to
show that this group is free abelian of finite rank.

Observe that if $X$~is a convex subset of~$V$, then every
automorphism~$\phi$ of~$\oset{X}$ can be extended to an automorphism
of~$\Omega$ by defining
\[
v\hat{\phi} = \begin{cases}
  v\phi &\text{for $v \in X$} \\
  v &\text{for $v \in V \setminus X$.}
\end{cases}
\]
Thus $\Aut X$~embeds as a subgroup of~$\Aut\Omega$ and so our
assumption implies that $\Aut X$~is countable for every convex
subset~$X$ of~$V$.  We shall use this and similar ideas throughout our
argument in this section.

If $f \in \Aut\Omega$, define a relation~$\sim$ on~$V$ by $x \sim y$
if and only if $xf^{m} \leq y \leq xf^{n}$ for some $m,n \in \Z$.
(Note that $\sim$~depends upon the automorphism~$f$, but for
simplicity of notation we choose not to write~$\sim_{f}$ for this
relation.)  Then $\sim$~is an equivalence relation on~$V$ and we
define the \emph{orbital}~$U_{f}(x)$ (following Truss~\cite{Truss})
to be the equivalence class of the point~$x$ under the
relation~$\sim$.  Observe that if $x$~is fixed by~$f$, then $U_{f}(x)
= \{ x \}$, while if $xf \neq x$ then the values~$xf^{n}$, as
$n$~ranges through~$\Z$, are distinct and it then follows from the
definition that $U_{f}(x)$~is an infinite convex subset of~$V$.

The following contains the basic properties of orbitals that we shall
need.

\begin{lemma}
  \label{lem:orbital}
  Let $f,g \in \Aut\Omega$ and $x \in V$.
  \begin{enumerate}
  \item If $xf > x$, then $U_{f}(x)$~is infinite and $uf > u$ for all
    $u \in U_{f}(x)$.
  \item If $xf < x$, then $U_{f}(x)$~is infinite and $uf < u$ for all
    $u \in U_{f}(x)$.
  \item $U_{f}(x)g = U_{g^{-1}fg}(xg)$.
  \item Only finitely many of the orbitals~$U_{f}(y)$, as $y$~ranges
    through~$V$, are infinite.
  \item If $f$~and~$g$ commute and $U_{f}(x)$~is infinite, then
    $U_{f}(x)g = U_{f}(x)$.
  \end{enumerate}
\end{lemma}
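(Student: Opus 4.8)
The plan is to prove each part of Lemma~\ref{lem:orbital} in turn, using the definition of the orbital $U_{f}(x)$ as the $\sim$-equivalence class and exploiting the fact that $f$ is an order-preserving bijection. The key recurring idea is that because $f$ preserves $\leq$, the behaviour ``$uf > u$'' propagates along the convex chain $\dots, xf^{-1}, x, xf, xf^{2}, \dots$, and convex sets interact cleanly with order-automorphisms.

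For part~(i), suppose $xf > x$. First I would show that $U_{f}(x)$ is infinite: applying the order-preserving map $f$ repeatedly to $x < xf$ yields $x < xf < xf^{2} < \cdots$, an infinite strictly increasing sequence of distinct points all $\sim$-related to $x$, so the class is infinite. Next, to show $uf > u$ for every $u \in U_{f}(x)$, I would argue that the set $\{\,u \in V : uf > u\,\}$ (the ``positive'' region of $f$) is a union of whole orbitals. The cleanest route is: if $u \sim x$ then $xf^{m} \leq u \leq xf^{n}$ for some $m,n$, and applying $f$ (monotone) gives $xf^{m+1} \leq uf \leq xf^{n+1}$; one then compares $u$ and $uf$ by locating both within the increasing two-sided sequence $(xf^{k})_{k \in \Z}$ and using that $f$ shifts this sequence one step up. The trichotomy for order-automorphisms — $uf > u$, $uf = u$, or $uf < u$ — cannot switch within an orbital, since a fixed point $uf = u$ would give $U_{f}(u) = \{u\}$ (finite, contradicting $u \sim x$), and a point with $uf < u$ would (by the dual of the increasing argument) generate a decreasing sequence forcing $u$ out of the convex hull of the $xf^{k}$. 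Part~(ii) is the exact dual and needs no separate work beyond replacing $f$ by $f^{-1}$ or reversing all inequalities.

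For part~(iii), $U_{f}(x)g = U_{g^{-1}fg}(xg)$, I would simply chase the definition: $y \sim_{f} x$ means $xf^{m} \leq y \leq xf^{n}$; applying the order-isomorphism $g$ preserves these inequalities, giving $(xg)(g^{-1}fg)^{m} \leq yg \leq (xg)(g^{-1}fg)^{n}$ after using $g^{-1}f^{k}g = (g^{-1}fg)^{k}$, which says exactly $yg \sim_{g^{-1}fg} xg$. For part~(iv), finiteness of the number of infinite orbitals, I would use the standing assumption that $\Aut\Omega$ is countable: the construction from the opening of the section shows that for any convex set $X$, $\Aut(X,{\leq})$ embeds in $\Aut\Omega$, and if there were infinitely many infinite orbitals of $f$ one could independently ``switch on'' $f$-restricted automorphisms on each, producing an uncountable subgroup (for instance a direct product of $2^{\aleph_{0}}$ worth of sign choices), contradicting countability. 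This is the step I expect to be the main obstacle, since it requires carefully justifying that each infinite orbital supports a nontrivial automorphism of $\Omega$ (namely a suitable power of $f$ restricted to that orbital, extended by the identity, which is legitimate as each orbital is convex) and that these combine into uncountably many distinct automorphisms. Finally, part~(v): if $fg = gf$ and $U_{f}(x)$ is infinite, then by part~(iii) $U_{f}(x)g = U_{g^{-1}fg}(xg) = U_{f}(xg)$; so it suffices to show $xg \in U_{f}(x)$, i.e.\ $xg \sim_{f} x$, which should follow because $g$ commutes with $f$ and hence permutes the $f$-orbitals among themselves while the infinite orbital containing $x$, being a maximal convex ``moving region'' of $f$, must be fixed setwise — here I would lean on parts~(i),~(ii) to characterise $U_{f}(x)$ as a maximal convex set on which $f$ acts without fixed points in a single direction, a property preserved under the commuting conjugation.
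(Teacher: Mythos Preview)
Your treatment of parts (i)--(iv) is essentially the paper's approach. For (i), the paper's argument is a bit more direct than your trichotomy: from $xf^{m} \leq u \leq xf^{n}$ with $m<n$ (the sequence $(xf^{k})$ being strictly increasing), it applies $f^{n-m}$ to the left inequality to obtain $xf^{n} \leq uf^{n-m}$, hence $u < uf^{n-m}$; since $uf \leq u$ would force $uf^{k} \leq u$ for all $k\geq 0$, one concludes $uf > u$. Your phrase ``a point with $uf<u$ would generate a decreasing sequence forcing $u$ out of the convex hull of the $xf^{k}$'' does not actually produce a contradiction as stated --- the decreasing sequence $(uf^{k})_{k\geq 0}$ need not leave that convex hull --- but your earlier sentence about ``locating both within the increasing two-sided sequence $(xf^{k})$ and using that $f$ shifts this sequence one step up'' is exactly the right idea and matches the paper. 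Parts (iii) and (iv) are identical in spirit to the paper's proof.

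There is, however, a genuine gap in your argument for part~(v). You correctly deduce from (iii) and $g^{-1}fg = f$ that $U_{f}(x)g = U_{f}(xg)$, so that $g$ permutes the $f$\nbd orbitals and in particular sends infinite orbitals to infinite orbitals. But your justification that the specific orbital $U_{f}(x)$ ``must be fixed setwise'' --- namely that it is ``a maximal convex set on which $f$ acts without fixed points in a single direction, a property preserved under the commuting conjugation'' --- only shows again that $g$ permutes the infinite $f$\nbd orbitals amongst themselves; every infinite $f$\nbd orbital shares that characterisation, so nothing singles out $U_{f}(x)$. The paper closes this gap by invoking part~(iv): there are only \emph{finitely many} infinite $f$\nbd orbitals, and as pairwise disjoint convex subsets of~$V$ they are linearly ordered; since $g$ is order-preserving, the induced permutation of this finite chain must be the identity, so each infinite orbital is fixed setwise. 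Without appealing to~(iv) your argument would not exclude, for instance, $g$ acting as a shift on an infinite family of infinite $f$\nbd orbitals.
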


\begin{proof}
  (i)~We have already observed that if $xf \neq x$, then the
  orbital~$U_{f}(x)$ is infinite.  Suppose $xf > x$, then $xf^{n+1} >
  xf^{n}$ for all $n \in \Z$.  So if $u \in U_{f}(x)$, there exist
  $m,n \in \Z$ such that $xf^{m} < u < xf^{n}$ where necessarily $m <
  n$.  Then $u < xf^{n} < uf^{n-m}$, which can only hold if $uf > u$.

  Part~(ii) is obtained by a similar argument to~(i), while
  part~(iii) is straightforward to establish from the definition.

  (iv)~Let $\set{U_{i}}{i \in I}$ be the set of those orbitals of~$f$
  that are infinite and suppose that $I$~is infinite.  Since
  the~$U_{i}$ are pairwise disjoint and each is a convex subset
  of~$V$, we can define, for each subset~$\Sigma$ of~$I$, an
  automorphism~$f_{\Sigma}$ of~$\Omega$ by
  \[
  vf_{\Sigma} = \begin{cases}
    vf &\text{if $v \in U_{i}$ where $i \in \Sigma$} \\
    v &\text{otherwise}.
  \end{cases}
  \]
  Since $f$~induces a non-identity transformation of each~$U_{i}$, we
  conclude that the~$f_{\Sigma}$ are distinct.  Hence, as $I$~has
  uncountably many subsets, we obtain a contradiction to the
  assumption that $\Aut\Omega$~is countable.  This establishes that
  only finitely many of the orbitals of~$f$ can be infinite.

  (v)~If $f$~and~$g$ commute, then part~(iii) of the lemma tells us
  that the action of~$g$ on~$V$ induces a permutation on the set of
  orbitals of~$f$.  Since only finitely many of these orbitals are
  infinite and since $g$~preserves the order on~$V$, it must be the
  case that $g$~fixes (setwise) all the orbitals of~$f$ that are
  infinite.
\end{proof}

\begin{lemma}
  \label{lem:orbitalinterval}
  Let $f \in \Aut\Omega$, \ $x \in V$ and suppose that the
  orbital~$U_{f}(x)$ is infinite.  If $a,b \in U_{f}(x)$ with $a < b$,
  then $\Aut (a,b) = \1$.
\end{lemma}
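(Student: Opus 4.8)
We have an automorphism $f$ of a countable linearly ordered set $\Omega$, a point $x$ whose orbital $U_f(x)$ is infinite, and two points $a,b \in U_f(x)$ with $a < b$. We must show the open interval $(a,b)$ (as a subset of $V$) has trivial automorphism group. The key structural fact available is that since $U_f(x)$ is infinite, by Lemma~\ref{lem:orbital}(i)/(ii) the map $f$ moves every point of $U_f(x)$ strictly in one direction — say $uf > u$ for all $u \in U_f(x)$ (the other case is symmetric). This means the powers $a, af, af^2, \dots$ form a strictly increasing sequence inside $U_f(x)$, and $f$ acts on $U_f(x)$ like a fixed-point-free shift.

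**The plan.** The plan is to use the shift $f$ to transport the interval $(a,b)$ through the whole orbital, tiling $U_f(x)$ by translates $(af^n, bf^n)$ (or overlapping copies thereof), and to argue that a nontrivial automorphism $\psi$ of $(a,b)$ would generate uncountably many distinct automorphisms of $\Omega$, contradicting the standing assumption that $\Aut\Omega$ is countable. Concretely, I would first observe that because $(a,b)$ is a convex subset of $V$, any $\psi \in \Aut(a,b)$ extends to an automorphism $\hat\psi$ of $\Omega$ by fixing everything outside $(a,b)$, exactly as in the extension construction recalled at the start of Section~\ref{sec:countable}. The heart of the argument is then to produce, from a single nontrivial $\psi$, an independent family of such extensions indexed by subsets of $\N$.

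**Key steps.** First I would locate within $U_f(x)$ infinitely many pairwise disjoint convex blocks each order-isomorphic to $(a,b)$: taking the $f$-translates $(af^{2n}, bf^{2n})$ for $n \in \N$ works once one checks (using $uf > u$ on the orbital and choosing the stride large enough, e.g. comparing $bf^{2n}$ with $af^{2n+2}$) that consecutive blocks are disjoint, or more cleanly that $a f^{k} \ge b$ for some fixed $k$ so that blocks spaced $k$ apart are separated. Since $f$ restricts to an order-isomorphism $(a,b) \to (af^{k}, bf^{k})$, each block carries a copy $\psi_n = f^{-kn}\,\psi\,f^{kn}$ of the nontrivial automorphism $\psi$. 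For each subset $\Sigma \subseteq \N$ I would define an automorphism $g_\Sigma$ of $\Omega$ that applies $\psi_n$ on the $n$-th block for $n \in \Sigma$ and fixes all other points; convexity of the blocks and their pairwise disjointness guarantee $g_\Sigma$ is a well-defined order-automorphism. Because $\psi$ is nontrivial it moves some point of $(a,b)$, hence $\psi_n$ moves a corresponding point of the $n$-th block, so distinct $\Sigma$ give distinct $g_\Sigma$. This yields $2^{\aleph_0}$ automorphisms of $\Omega$, contradicting countability of $\Aut\Omega$, and forces $\psi = \1$.

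**Main obstacle.** The delicate point I expect to be the crux is verifying that one can genuinely separate infinitely many disjoint copies of $(a,b)$ inside the single orbital. The monotonicity $uf > u$ guarantees the translates march off in one direction, but I must confirm that some fixed power $f^k$ carries $b$ past $a$ — i.e. that $a f^{k} \ge b$ for some $k$ — so that the blocks $(af^{kn}, bf^{kn})$ are pairwise disjoint rather than overlapping indefinitely. This follows because $a \sim b$ in the orbital means $b \le af^{N}$ for some $N \ge 1$ by the very definition of $\sim$ (namely $b \in U_f(x)$ gives $af^{m} \le b \le af^{N}$ for some integers), and then the strict increase of the sequence $(af^{kn})$ does the rest. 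Once this spacing is pinned down, the remaining verifications — that each $g_\Sigma$ preserves order across block boundaries, and that distinct subsets yield distinct automorphisms — are routine, mirroring the counting argument already used in the proof of Lemma~\ref{lem:orbital}(iv).
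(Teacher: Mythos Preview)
Your proposal is correct and follows essentially the same approach as the paper: both use the orbital definition to find a power $f^{m}$ with $b \leq af^{m}$, obtain infinitely many pairwise disjoint translates of $(a,b)$ under powers of~$f$, and then embed an infinite product of copies of $\Aut(a,b)$ into $\Aut\Omega$ to contradict countability unless $\Aut(a,b)$ is trivial. The paper's version is more terse---it goes straight to the Cartesian product embedding rather than phrasing it via conjugates $\psi_{n}$ and subset-indexed maps $g_{\Sigma}$---but the underlying idea is identical.
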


\begin{proof}
  Write~$B$ for the interval $(a,b) = \set{v \in V}{a < v < b}$.
  Since $a,b \in U_{f}(x)$, there exists some $m \in \Z$ such that $b
  < af^{m}$.  It follows that the sets~$Bf^{km}$, as $k$~ranges over
  the positive integers, are pairwise disjoint.  As $f$~is an
  automorphism of~$\Omega$, each set~$Bf^{km}$ is order-isomorphic
  to~$B$.  We now have an infinite number of pairwise disjoint convex
  subsets and so it follows that we can embed the Cartesian
  product~$\prod_{k=0}^{\infty} \Aut Bf^{km}$ in~$\Aut\Omega$ by
  extending automorphisms defined on each of the sets~$Bf^{km}$ to the
  whole set~$V$.  In view of the fact that $\Aut\Omega$~is countable,
  we deduce that $\Aut B = \1$.
\end{proof}

We are now able to establish one of our main steps along the way to
proving Theorem~\ref{thm:countableautoms}, namely that the infinite
orbitals~$U_{f}(x)$, as $f$~ranges over all automorphisms of~$\Omega$
and $x$~ranges over~$V$, are either disjoint or are equal.

\begin{prop}
  \label{prop:equalorb}
  Let $f$~and~$g$ be automorphisms of~$\Omega$, $x \in V$ and suppose
  that both orbitals $U_{f}(x)$~and~$U_{g}(x)$ are infinite.  Then
  $U_{f}(x) = U_{g}(x)$.
\end{prop}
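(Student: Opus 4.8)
The plan is to argue by contradiction: assuming $U_f(x)\neq U_g(x)$, I would manufacture either an interval that Lemma~\ref{lem:orbitalinterval} forces to be rigid yet which visibly is not, or infinitely many pairwise disjoint convex sets each carrying a non-trivial automorphism — the latter, exactly as in the proofs of Lemma~\ref{lem:orbital}(iv) and Lemma~\ref{lem:orbitalinterval}, embeds an uncountable direct product into $\Aut\Omega$. First I would normalise: replacing $f$ and $g$ by their inverses if necessary (which changes neither orbital, since $U_f(x)=U_{f^{-1}}(x)$), I may assume $xf>x$ and $xg>x$, so that by Lemma~\ref{lem:orbital}(i) each map acts as a fixed-point-free upward shift on its orbital. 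Both orbitals are convex, infinite, and have neither a maximum nor a minimum, so each equals the open interval in $V$ between its infimum and supremum in $\R^{\ast}$; hence equal infima together with equal suprema would already force $U_f(x)=U_g(x)$. Reversing the order on $V$ and/or interchanging $f$ and $g$, I may therefore assume $\sup U_f(x)<\sup U_g(x)$, so that $U_g(x)$ contains points lying above all of $U_f(x)$.

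The central observation is that $f$ restricted to $U_f(x)$ is a non-trivial automorphism of that convex set, so extending it by the identity on $V\setminus U_f(x)$ produces an automorphism $\hat f\in\Aut\Omega$ whose support is exactly $U_f(x)$. Consequently, whenever I can find $a,b\in U_g(x)$ with $a<U_f(x)<b$, the interval $(a,b)$ is $\hat f$-invariant and $\hat f$ induces a non-trivial automorphism of it, contradicting $\Aut(a,b)=\1$ from Lemma~\ref{lem:orbitalinterval}. Thus the task reduces to enclosing (a suitable isomorphic copy of) $U_f(x)$ strictly between two points of $U_g(x)$. Room above is already guaranteed; if in addition $\inf U_g(x)<\inf U_f(x)$, then room below is available too and the contradiction is immediate.

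The remaining possibility is $\inf U_f(x)\le\inf U_g(x)$, where the lower end of $U_f(x)$ is not enclosed by $U_g(x)$ and I would switch to the disjoint-copies engine. When $\inf U_f(x)<\inf U_g(x)$ (the orbitals genuinely overlap without nesting), the overlap $O=U_f(x)\cap U_g(x)$ is the interval with lower cut $\inf U_g(x)$ and upper cut $\sup U_f(x)$: its lower end is fixed by $g$ but, being interior to $U_f(x)$, is pushed upward by $f$, while its upper end is fixed by $f$ but pushed upward by $g$. A word $\sigma=f^{N}g^{M}$ can then be chosen carrying $O$ entirely above itself, so that $O,\,O\sigma,\,O\sigma^{2},\dots$ are pairwise disjoint convex copies of $O$; provided $\Aut O\neq\1$, this yields the required uncountable product and hence a contradiction.

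The hard part — and the step I expect to be the main obstacle — is the \emph{touching} configuration $\inf U_f(x)=\inf U_g(x)$ (so $U_f(x)\subsetneq U_g(x)$ share their infimum), together with verifying the non-triviality assertions above. In the touching case the common lower cut is fixed by \emph{both} $f$ and $g$, hence by every element of $\langle f,g\rangle$, so no word translates $U_f(x)$ off itself and every bracketing or copying strategy anchored at that end fails outright. Overcoming this requires exploiting that $f$ and $g$ are two distinct upward shifts which share a fixed boundary cut yet have different suprema: from this interaction one must extract either a non-trivial automorphism of an enclosable interval with both endpoints in $U_g(x)$, or directly an uncountable family of automorphisms, contradicting the standing hypothesis that $\Aut\Omega$ is countable.
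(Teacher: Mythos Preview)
Your outline correctly isolates the enclosed-nested case and handles it exactly as the paper does, via Lemma~\ref{lem:orbitalinterval}. You also correctly identify the ``touching'' nested configuration (say $U_f(x)\subsetneq U_g(x)$ with no point of $U_g(x)$ below $U_f(x)$) as the real obstacle, but you do not resolve it, and this is the genuine gap. The paper's device here is worth knowing: set $\theta_i=g^{i}fg^{-i}$, whose unique infinite orbital is $U_f(x)g^{-i}$. Because $g$ pushes points eventually out of $U_f(x)$, for each $v\in U_g(x)$ only finitely many $\theta_i$ move $v$; hence for any infinite $\Sigma=\{\sigma_0<\sigma_1<\dots\}\subseteq\N$ the pointwise limit $h_\Sigma=\lim_n\theta_{\sigma_n}\cdots\theta_{\sigma_0}$ is a well-defined automorphism. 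A short calculation shows the $h_\Sigma$ are pairwise distinct, yielding uncountably many elements of $\Aut\Omega$. This is precisely the ``uncountable family'' you gesture at in your final sentence, but the mechanism --- conjugates with eventually-disjoint supports made into convergent infinite products --- is the missing idea.

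For the genuine-overlap case your proposed route diverges from the paper's and is shakier than you suggest. Your plan needs both that some $\sigma=f^{N}g^{M}$ carries $O=U_f(x)\cap U_g(x)$ entirely above itself and that $\Aut O\neq\1$. The first claim is delicate (for instance $\sup(Of^{N})$ is still the upper cut of $U_f(x)$, so one must argue via $\bigcap_N Of^{N}=\emptyset$ rather than a naive ``push the cut''), and the second is exactly what you flag as unverified. The paper avoids both issues: after checking $f$ and $g$ do not commute (via Lemma~\ref{lem:orbital}(v)), it sets $h=f^{-1}g^{-1}fg$ and observes directly that $h$ fixes every point of $U_f(x)\setminus U_g(x)$ and of $U_g(x)\setminus U_f(x)$. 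Hence any infinite orbital of $h$ lies inside $O\subsetneq U_f(x)$, and one is back in the nested case already disposed of. This reduction via the commutator is both simpler and supplies the non-triviality of $\Aut O$ for free.

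A minor point: $V$ is an arbitrary countable chain, not $\Q$, so invoking infima and suprema in $\R^{\ast}$ is not literally available; you should speak of cuts in $V$ instead. This is cosmetic and does not affect the substance of your argument.
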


\begin{proof}
  Suppose that $U_{f}(x) \neq U_{g}(x)$.  If $f$~has more than one
  infinite orbital, replace~$f$ by the map given by
  \[
  v \tilde{f} = \begin{cases}
    vf &\text{if $v \in U_{f}(x)$,} \\
    v &\text{otherwise}.
  \end{cases}
  \]
  Thus, we can assume that $f$~acts as the identity on~$V \setminus
  U_{f}(x)$ and that $U_{f}(v) = \{v\}$ for all $v \in V \setminus
  U_{f}(x)$.  Also, replacing~$f$ by~$f^{-1}$ if necessary, we can
  assume that $vf > v$ for all $v \in U_{f}(x)$.  Similarly, we can
  assume that $g$~has only one infinite orbital, namely~$U_{g}(x)$,
  and that $vg > v$ for all $v \in U_{g}(x)$.  We deal first with the
  possibility that one of these infinite orbitals is a proper subset
  of the other.  Without loss of generality, suppose $U_{f}(x) \subset
  U_{g}(x)$.  We shall consider the possible arrangements of the
  points in the complement~$U_{g}(x) \setminus U_{f}(x)$.

  First, if there exist $a,b \in U_{g}(x)$ such that $a < U_{f}(x) <
  b$, then note that $f$~induces a non-trivial automorphism of the
  interval~$(a,b)$.  (Indeed, $f$~acts non-trivially on the
  set~$U_{f}(x)$ and fixes all points in~$(a,b) \setminus U_{f}(x)$.)
  We then obtain a contradiction since Lemma~\ref{lem:orbitalinterval}
  applied to the orbital~$U_{g}(x)$ tells us that $\Aut (a,b)$~is
  trivial.  Hence no such pair $a$~and~$b$ exists.

  Therefore, if $U_{f}(x) \subset U_{g}(x)$, there exist points
  in~$U_{g}(x)$ greater than those in~$U_{f}(x)$ under the
  order~$\leq$, or points less than those in~$U_{f}(x)$, but not both.
  The argument for both cases is the same, so we shall assume the
  existence of some~$b \in U_{g}(x)$ with $U_{f}(x) < b$, but that
  there is no $a \in U_{g}(x)$ with $a < U_{f}(x)$.  In this setting,
  note first that if it were the case that $f$~and~$g$ commute, then
  $U_{f}(x)g = U_{f}(x)$ by Lemma~\ref{lem:orbital}(v), but this
  contradicts the fact that there exists some~$m$ such that $xg^{m} >
  b$.  Hence $f$~and~$g$ do not commute.

  Now for each~$v \in U_{g}(x)$, there is some $n \in \Z$ satisfying
  $vg^{n} > b$ and so $vg^{n} \notin U_{f}(x)$.  Equally, $vg^{m} < x$
  for some $m \in \Z$ and so $vg^{m} \in U_{f}(x)$ since
  $vg^{m}$~cannot satisfy $vg^{m} < U_{f}(x)$.  Then $vg^{m} <
  vg^{n}$, so that $m < n$.  It follows that for every $v \in
  U_{g}(x)$ there is a minimum integer~$m(v)$ satisfying $vg^{m(v)}
  \notin U_{f}(x)$ and this integer has the property that $vg^{n} \in
  U_{f}(x)$ for all $n < m(v)$ and $vg^{n} \notin U_{f}(x)$ for all $n
  \geq m(v)$.

  Now consider the automorphism~$\theta_{i}$ defined by $\theta_{i} =
  g^{i} f g^{-i}$, which by Lemma~\ref{lem:orbital}(iii) has a single
  infinite orbital, namely $U_{\theta_{i}}(xg^{-i}) = U_{f}(x)g^{-i}$,
  which is some subset of~$U_{g}(x)$ (since $U_{f}(x) \subseteq
  U_{g}(x)$ and $g$~fixes $U_{g}(x)$ setwise).  If $v \in U_{g}(x)$,
  observe $v \in U_{f}(x)g^{-i}$ if and only if $vg^{i} \in U_{f}(x)$;
  that is, when $i < m(v)$.  Consequently, $v\theta_{i} = v$ whenever
  $i \geq m(v)$ and, by Lemma~\ref{lem:orbital}(i), \ $v\theta_{i}
  \neq v$ whenever $i < m(v)$.

  Now if $\Sigma = \{ \sigma_{0}, \sigma_{1}, \sigma_{2}, \dots \}$ is
  an infinite subset of~$\N$ with $\sigma_{i} < \sigma_{i+1}$ for
  each~$i$, we can define another automorphism of~$\Omega$ by
  \[
  h_{\Sigma} = \lim_{n \to \infty} \theta_{\sigma_{n}} \dots
  \theta_{\sigma_{1}} \theta_{\sigma_{0}}.
  \]
  (In order to make sense of this definition, recall our convention is
  to write maps on the right.)  If $v \in V \setminus U_{g}(x)$, then
  $v\theta_{i} = v$ for all~$i$, so we observe $v\theta_{\sigma_{n}}
  \dots \theta_{\sigma_{0}} = v$ and hence $vh_{\Sigma}$~is defined and indeed
  equals~$v$ for such~$v$.  On the other hand, if $v \in U_{g}(x)$,
  then there exists some~$N$ such that $\sigma_{k} \geq m(v)$ for all
  $k > N$.  Thus $v\theta_{\sigma_{k}} = v$ for all such~$k$ and we
  conclude that
  \[
  v \theta_{\sigma_{n}} \dots \theta_{\sigma_{1}} \theta_{\sigma_{0}}
  = v \theta_{\sigma_{N}} \dots \theta_{\sigma_{1}} \theta_{\sigma_{0}}
  \]
  for all $n > N$.  Hence $vh_{\Sigma}$~is defined for all $v \in
  U_{g}(x)$ since $v\theta_{\sigma_{n}} \dots
  \theta_{\sigma_{0}}$~takes the same value independent of~$n$
  provided this~$n$ is large enough.  In addition to having observed
  that $h_{\Sigma}$~is well-defined, such calculations similarly show
  that $h_{\Sigma} \in \Aut\Omega$.

  Having verified that $h_{\Sigma}$~is defined for any
  (infinite)~$\Sigma \subseteq \N$, we now observe that $h_{\Sigma}
  \neq h_{T}$ for distinct $\Sigma,T \subseteq \N$.  Indeed, suppose
  $\Sigma = \{ \sigma_{0}, \sigma_{1}, \dots, \sigma_{r-1},
  \sigma_{r}, \dots \}$ and $T = \{ \sigma_{0}, \sigma_{1}, \dots,
  \sigma_{r-1}, \tau_{r}, \dots \}$ where, without loss of generality,
  $\sigma_{r} < \tau_{r}$.  Take $u = xg^{k}$ where $k = m(x) -
  \sigma_{r} - 1$.  Observe $ug^{\sigma_{r}} = xg^{m(x)-1} \in
  U_{f}(x)$ and $ug^{i} \notin U_{f}(x)$ for all $i > \sigma_{r}$.
  Thus $u \notin U_{f}(x)g^{-i}$ for all $i > \sigma_{r}$, so that
  $u\theta_{i} = u$ for all such~$i$.  Hence, for $n \geq r$,
  \[
  u \theta_{\sigma_{n}} \dots \theta_{\sigma_{0}} = u
  \theta_{\sigma_{r}} \dots \theta_{\sigma_{0}}
  \qquad \text{and} \qquad
  u \theta_{\tau_{n}} \dots \theta_{\tau_{r}} \theta_{\sigma_{r-1}}
  \dots \theta_{\sigma_{0}} = u \theta_{\sigma_{r-1}} \dots \theta_{\sigma_{0}}.
  \]
  As $u \in U_{f}(x)g^{-\sigma_{r}} = U_{\theta_{\sigma_{r}}}(x)$, we know
  $u\theta_{\sigma_{r}} \neq u$ and so we conclude $uh_{\Sigma} \neq
  uh_{T}$, which establishes our claim that the~$h_{\Sigma}$ are
  distinct.  Since $\Aut\Omega$~is countable, it cannot contain these
  uncountably many automorphisms~$h_{\Sigma}$ and we have another
  contradiction.  The other remaining case when $U_{f}(x) \subset
  U_{g}(x)$ is similar, which now establishes that $U_{f}(x)$~is not a
  proper subset of~$U_{g}(x)$ nor \emph{vice versa}.

  Thus there exists some $a \in U_{f}(x)$ and $b \in U_{g}(x)$ such
  that $a \notin U_{g}(x)$ and $b \notin U_{f}(x)$.  We may assume,
  without loss of generality that $a < U_{g}(x)$.  Then, since
  $U_{f}(x)$~and~$U_{g}(x)$ are convex, we observe $U_{f}(x) < b$.
  Moreover we also note that the sets $U_{f}(x) \setminus U_{g}(x)$,
  \ $U_{f}(x) \cap U_{g}(x)$ and~$U_{g}(x) \setminus U_{f}(x)$ are all
  convex and satisfy $U_{f}(x) \setminus U_{g}(x) < U_{f}(x) \cap
  U_{g}(x) < U_{g}(x) \setminus U_{f}(x)$.  Suppose first that
  $f$~and~$g$ commute.  Then as $b,x \in U_{g}(x)$, there exists some
  $n \in \Z$ such that $b < xg^{n}$.  However, this is impossible as
  $xg^{n} \in U_{f}(x)$ by use of Lemma~\ref{lem:orbital}(v).  Hence
  we it must be the case that $f$~and~$g$ do not commute.  Put $h =
  f^{-1}g^{-1}fg$, which is some non-identity element of~$\Aut\Omega$.
  If $v \in U_{f}(x) \setminus U_{g}(x)$, then $vf^{-1} < v <
  U_{g}(x)$ and so $vf^{-1} \notin U_{g}(x)$ and hence
  $vf^{-1}g^{-1}fg = vf^{-1}fg = vg = v$.  Similarly, if $vh = v$ for
  $v \in U_{g}(x) \setminus U_{f}(x)$.  It follows that any
  infinite~$U_{h}(y)$ is a subset of~$U_{f}(x) \cap U_{g}(x)$.
  However, we have already established that this is impossible, since
  a pair of non-identity automorphisms $f$~and~$h$ cannot have
  infinite orbitals satisfying $U_{h}(x) \subset U_{f}(x)$.  This
  final contradiction completes the proof of the claim: $U_{f}(x) =
  U_{g}(x)$.
\end{proof}

Recall that a \emph{linearly order group} is a group~$G$ together with
a linear order~$\leq$ upon it such that if $g,h,k \in G$ with $h \leq
k$, then $gh \leq gk$ and $hg \leq kg$.  An \emph{Archimedean group}
is a linearly ordered group~$G$ with the property if $g,h \in G$
satisfy $1 < g < h$, there exists $n \in \N$ such that $h < g^{n}$.

Let $f \in \Aut\Omega$ and fix $x \in V$ such that the orbital $U =
U_{f}(x)$~is infinite.  For $\phi,\psi \in \Aut U$, define $\phi \leq
\psi$ whenever $x\phi \leq x\psi$.  We shall observe that this is a
well-defined linear order with respect to which $\Aut U$~is an
Archimedean group.

\begin{lemma}
  \label{lem:Archimedean}
  \begin{enumerate}
  \item The map $\xi \colon \Aut U \to U$ given by $\phi \mapsto
    x\phi$ for each $\phi \in \Aut U$ is an injective map.
  \item The order~$\leq$ is a well-defined linear order on~$\Aut U$
    with respect to which $\Aut U$~is an Archimedean group.
  \end{enumerate}
\end{lemma}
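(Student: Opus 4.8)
The plan is to reduce both parts to a single structural fact about $\Aut U$, which I shall call the \emph{no-crossing property}: every non-identity $\gamma \in \Aut U$ has a unique orbital, namely the whole of $U$, and consequently either $w\gamma > w$ for all $w \in U$ or $w\gamma < w$ for all $w \in U$. Here I regard each $\phi \in \Aut U$ as an element of $\Aut\Omega$ by extending it by the identity on $V \setminus U$, which is legitimate because $U = U_{f}(x)$ is convex; note that for $w \in U$ the orbital of this extension computed in $\Omega$ coincides with $U_{\gamma}(w)$ computed in $U$. To prove the no-crossing property, suppose $\gamma \neq \1$ and pick $w_{0} \in U$ with $w_{0}\gamma \neq w_{0}$. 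Then $U_{\gamma}(w_{0})$ is infinite by Lemma~\ref{lem:orbital}, while $U_{f}(w_{0}) = U_{f}(x) = U$ is infinite since the orbitals of~$f$ partition~$V$ and $U$~is one of them. Proposition~\ref{prop:equalorb}, applied to $\gamma$ and~$f$ at the point~$w_{0}$, then forces $U_{\gamma}(w_{0}) = U$. As orbitals partition~$U$, every point of~$U$ lies in this single orbital and is therefore moved by~$\gamma$, all in the same direction by Lemma~\ref{lem:orbital}(i)--(ii); the direction is that of~$w_{0}$, so it is uniform.

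Part~(i) is then immediate: if $x\phi = x\psi$ then $\gamma = \phi\psi^{-1}$ fixes~$x$, so $\gamma$~has a fixed point and hence cannot have the whole of~$U$ as an orbital; by the no-crossing property $\gamma = \1$, giving $\phi = \psi$. Thus $\xi$~is injective.

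For part~(ii), totality, reflexivity and transitivity of~$\leq$ are inherited directly from the linear order on~$U$, while antisymmetry is exactly the injectivity from part~(i); hence $\leq$~is a linear order on $\Aut U$. Compatibility on the right is immediate: if $x\phi \leq x\psi$ then applying the order-preserving map~$\chi$ gives $(x\phi)\chi \leq (x\psi)\chi$, that is, $x(\phi\chi) \leq x(\psi\chi)$. The substantive point --- and the main obstacle --- is compatibility on the \emph{left}, for there the hypothesis $x\phi \leq x\psi$ concerns the base point~$x$ whereas the conclusion $x(\chi\phi) \leq x(\chi\psi)$ concerns the point $w = x\chi$. This is precisely where the no-crossing property is needed: set $\gamma = \phi\psi^{-1}$, so that $x\phi \leq x\psi$ gives $x\gamma \leq x$ (apply $\psi^{-1}$), whence $w\gamma \leq w$ for every $w \in U$ by the no-crossing property, and applying~$\psi$ returns $w\phi \leq w\psi$; taking $w = x\chi$ yields $\chi\phi \leq \chi\psi$. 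Thus $\Aut U$ is a linearly ordered group.

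Finally, to see it is Archimedean, suppose $\1 < \phi < \psi$. Then $x\phi > x$, so $\phi \neq \1$ and the no-crossing property gives $U_{\phi}(x) = U$. Since $x\psi \in U = U_{\phi}(x)$, the definition of the orbital provides integers $m,n$ with $x\phi^{m} \leq x\psi \leq x\phi^{n}$; as $x\phi^{n+1} > x\phi^{n} \geq x\psi$ we obtain $\psi < \phi^{n+1}$, which is the Archimedean condition. The only real work lies in the no-crossing property, and once Proposition~\ref{prop:equalorb} is invoked there, the remaining verifications are routine.
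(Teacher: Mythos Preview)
Your proof is correct and follows essentially the same route as the paper's: both extend a non-identity $\gamma \in \Aut U$ to~$\Omega$, invoke Proposition~\ref{prop:equalorb} to force $U_{\gamma}(w_{0}) = U$, and then use Lemma~\ref{lem:orbital}(i)--(ii) to conclude that $\gamma$ moves every point of~$U$ in a single direction; injectivity, bi-invariance of the order, and the Archimedean property all follow from this just as you describe.  The only difference is cosmetic---you isolate and name the ``no-crossing property'' up front, whereas the paper applies Proposition~\ref{prop:equalorb} and Lemma~\ref{lem:orbital} inline at each step---so the arguments are substantively identical.
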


\begin{proof}
  (i)~Suppose $\phi$~and~$\psi$ are distinct automorphisms of~$U$.
  Then $g = \phi\psi^{-1}$~can be extended to a non-identity
  automorphism of~$\Omega$ by defining $vg = v$ for all~$v \in V
  \setminus U$.  By assumption some $u \in U$ is moved by~$g$ and then
  $U_{g}(u) = U$ by Proposition~\ref{prop:equalorb}.  In particular,
  $x \in U_{g}(u)$ and hence $xg \neq x$ by
  Lemma~\ref{lem:orbital}(i)--(ii).  This shows that $x\phi \neq
  x\psi$, as is required to establish that $\xi$~is injective.

  (ii)~Part~(i) of the lemma shows that the set of automorphisms
  of~$U$ is in one-one correspondence with the subset
  $\set{x\phi}{\phi \in \Aut U}$ of~$U$ and hence the order on~$U$
  induces an order on~$\Aut U$; that is, the order~$\leq$ defined by
  $\phi \leq \psi$ if and only if $x\phi \leq x\psi$.  It is
  straightforward to verify that $\Aut U$~is a linearly ordered group
  with respect to~$\leq$.  (One makes use of Lemma~\ref{lem:orbital}
  in this verification.  For example, if $\phi,\psi,\theta \in \Aut U$
  with $\phi \leq \psi$, then $x\psi\phi^{-1} \geq x$ and use of
  Lemma~\ref{lem:orbital}(ii) shows that $x\theta\psi\phi^{-1} \geq
  x\theta$.  It then follows $\theta\phi \leq \theta\psi$, which is
  one of the facts that needs to be established.)

  Finally, if $1 < \phi < \psi$, we extend $\phi$~to an isomorphism
  of~$\Omega$ and observe $U = U_{\phi}(x)$ by
  Proposition~\ref{prop:equalorb}.  The definition of~$U_{\phi}(x)$
  then provides $n \in \N$ such that $x\psi < x\phi^{n}$, so $\psi <
  \phi^{n}$.  This establishes that $\Aut U$~is an Archimedean group
  with respect to the order~$\leq$.
\end{proof}

We can now make use of the result, originally due to
H\"{o}lder~\cite{Holder}, that an Archimedean group is isomorphic to
an additive subgroup of the set~$\R$ of real numbers (see, for
example,~\cite[Theorem~4.A]{Glass}).  In~\cite[Lemma~4.21]{Goodearl}
it is noted that such a subgroup is either cyclic or is a dense subset
of~$\R$.  Our current goal is to establish
Proposition~\ref{prop:AutU} below, namely that $\Aut U$~is an infinite
cyclic group, so let us assume, seeking a contradiction, that
$\oset{\Aut U}$~is a dense linearly ordered set.

In Lemma~\ref{lem:Archimedean} we have observed that the map~$\xi$ is
an order-isomorphism from~$\Aut U$ to the orbit of~$x$ under the
action of~$\Aut U$ (with the order on this orbit being that induced
from the ordered set~$\Omega$).  Thus $\set{x\phi}{\phi \in \Aut
  U}$~is a dense linearly ordered set with no maximum or minimum
element and is therefore order-isomorphic to~$\Qset$.  This
observation is independent of the choice of representative~$x$ in~$U$
and hence every orbit in~$U$ under the action of~$\Aut U$ is
order-isomorphic to~$\Qset$.

\begin{lemma}
  \label{lem:interleave}
  If $u,v \in U = U_{f}(x)$ and $\phi,\psi \in \Aut U$ with $\phi <
  \psi$, then there exists $\theta \in \Aut U$ with $u\phi < v\theta <
  u\psi$.
\end{lemma}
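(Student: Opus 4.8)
The plan is to simplify the statement by left-translating. If I replace the unknown $\theta$ by $\theta'\phi$, then the desired inequality $u\phi < v\theta < u\psi$ is equivalent (upon applying the order-automorphism $\phi^{-1}$) to $u < v\theta' < u\mu$, where $\mu = \psi\phi^{-1}$; since $\phi < \psi$ we have $\mu > 1$ in $\Aut U$. So I would fix $u,v \in U$ and $\mu \in \Aut U$ with $\mu > 1$ and seek $\theta' \in \Aut U$ with $u < v\theta' < u\mu$, after which $\theta = \theta'\phi$ recovers the original claim (applying $\phi$ and using that it is order-preserving).

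The geometric idea is to use the points $u\mu^{n}$, for $n \in \Z$, as a scaffold. Because $\mu > 1$, Lemma~\ref{lem:orbital}(i) gives $w\mu > w$ for every $w \in U$, so the sequence $(u\mu^{n})_{n\in\Z}$ is strictly increasing. The key technical step, which I expect to be the main obstacle, is to show that $\{u\mu^{n} : n \in \Z\}$ is both cofinal and coinitial in~$U$; granting this, these points partition $U$ into the half-open intervals $[u\mu^{n},u\mu^{n+1})$. To establish cofinality I would first note that, after replacing $f$ by $f^{-1}$ if necessary so that $xf > x$, the powers $uf^{k}$ are cofinal and coinitial in $U$: this is immediate from the definition of the orbital $U = U_{f}(x)$, since every point of $U$ lies between two powers $xf^{a}$ and $xf^{b}$, and $u \geq xf^{a}$ forces $uf^{k} \geq xf^{a+k}$. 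I would then invoke the Archimedean property of $\Aut U$ from Lemma~\ref{lem:Archimedean}(ii), applied to $f$ and $\mu$ (both exceeding $1$), to upgrade cofinality of $(uf^{k})$ to cofinality of $(u\mu^{n})$, and dually for coinitiality. Concretely, given $c \in U$ choose $k$ with $uf^{k} > c$; the Archimedean property yields $m$ with $f^{k} < \mu^{m}$, and then applying $f^{-k}$ together with Lemma~\ref{lem:orbital}(i) to the element $\mu^{m}f^{-k} > 1$ gives $u\mu^{m} > uf^{k} > c$.

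With the scaffold in place, the point $v$ lies in a unique interval $[u\mu^{n},u\mu^{n+1})$, and I would finish by a case split according to whether or not $v$ lies in the same $\Aut U$-orbit as $u$. If it does not, then $v$ avoids every endpoint $u\mu^{k}$ (these all lie in the orbit of~$u$), so in fact $v \in (u\mu^{n},u\mu^{n+1})$; applying the order-automorphism $\mu^{-n}$ then gives $v\mu^{-n} \in (u,u\mu)$, so $\theta' = \mu^{-n}$ works. If instead $v$ lies in the orbit of~$u$, I would use the fact recorded just before the lemma that every orbit in~$U$ is order-isomorphic to~$\Qset$, and hence densely ordered: as $u$ and $u\mu$ are two of its points with $u < u\mu$, there is a third orbit point $z$ with $u < z < u\mu$, and writing $z = v\theta'$ for a suitable $\theta' \in \Aut U$ completes this case. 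In either case, translating back by~$\phi$ produces the required~$\theta$.
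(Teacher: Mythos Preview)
Your argument is correct and follows essentially the same path as the paper: both reduce to $\phi = \mathrm{id}$, handle the same-orbit case by density of the orbit, and in the different-orbit case locate a power of $\mu$ (the paper's~$\psi$) that lands $v$ in the open interval $(u,u\mu)$.

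The one noteworthy difference is in how you justify that the powers $u\mu^{n}$ are cofinal and coinitial in~$U$.  You compare $\mu$ with~$f$ via the Archimedean property of Lemma~\ref{lem:Archimedean}(ii).  The paper instead invokes Proposition~\ref{prop:equalorb} directly: extending~$\psi$ to~$\Omega$, one gets $U_{\psi}(u) = U_{f}(u) = U$, and the very definition of an orbital then says every point of~$U$ lies between two powers $u\psi^{m}$ and~$u\psi^{n}$.  This is shorter, since the Archimedean property was itself derived from Proposition~\ref{prop:equalorb}; your route works but passes through an intermediate consequence rather than using the source directly.  Either way the minimal-$n$ argument (your ``scaffold'' interval $[u\mu^{n},u\mu^{n+1})$ containing~$v$) is identical in spirit to the paper's choice of the least~$n$ with $v\psi^{n} > u$.
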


\begin{proof}
  By use of Lemma~\ref{lem:orbital}(ii), we observe that the
  hypothesis $\phi < \psi$ ensures that $u\phi < u\psi$.  When
  $u$~and~$v$ belong to the same orbit of~$\Aut U$ on~$U$ the claim is
  now immediate since that orbit is order-isomorphic to~$\Qset$.
  Suppose that $v$~is not in the orbit of~$u$ under the action
  of~$\Aut U$ and, by applying~$\phi^{-1}$ if necessary, assume that
  $\phi$~is the identity automorphism.  Thus $\psi$~is a non-identity
  automorphism of~$U$ satisfying $u < u\psi$ and we must find~$\theta
  \in \Aut U$ with $u < v\theta < u\psi$.

  Suppose first that $v < u$.  We extend~$\psi$ to an automorphism
  of~$\Omega$ by defining it to fix all points outside the
  orbital~$U$.  Then Proposition~\ref{prop:equalorb} tells us that
  $U_{\psi}(u) = U_{f}(u) = U$.  In particular, there exists some~$n
  \in \N$ such that $v\psi^{n} > u$.  Take~$n$ to be the minimum
  positive integer satisfying $v\psi^{n} > u$.  Then $v\psi^{n-1} <
  u$, so $u < v\psi^{n} < u\psi$ and so, in this case, $\theta =
  \psi^{n}$ is our required automorphism.

  If $u < v$, then since $U_{\psi}(u) = U$ we can find some power
  of~$\psi$ such that $v\psi^{m} < u$.  Applying the previous
  paragraph to~$v\psi^{m}$ finds $n \in \N$ such that $u < v\psi^{m+n}
  < u\psi$ and then $\theta = \psi^{m+n}$ is the automorphism we seek.
\end{proof}

Now enumerate the points in~$U$ as the sequence~$(x_{n})$.  First
consider the set~$\T_{0}$ of convex subsets~$S$ of~$U$ such that
(i)~$S$~contains~$x_{0}$ and (ii)~$S\phi$~is disjoint from~$S$ for
every non-identity automorphism~$\phi$ of~$U$.  As only the identity
automorphism fixes~$x_{0}$ (see Lemma~\ref{lem:orbital}(i)--(ii)) we
conclude $\{x_{0}\}$~is a set in~$\T_{0}$ (so $\T_{0}$~is non-empty)
and it is straightforward to verify that the union of any chain of
subsets of~$\T_{0}$ is again a member of~$\T_{0}$.  Hence, by Zorn's
Lemma, there is some maximal member~$M_{0}$ of~$\T_{0}$.

Suppose then that, for some~$n$, we have found subsets
$M_{0}$,~$M_{1}$, \dots,~$M_{k}$ of~$U$ such that
\begin{itemize}
\item $x_{0},x_{1},\dots,x_{n-1} \in \set{u\phi}{u \in M_{0} \cup
  \dots \cup M_{k}, \; \phi \in \Aut U}$, and
\item $M_{i}$~is a maximal convex subset of~$U$ subject to
  \begin{align}
    &M_{i} \subseteq U \setminus \set{u\phi}{u \in M_{0} \cup \dots
      \cup M_{i-1}, \; \phi \in \Aut U}
    \label{eq:Mi-domain} \\
    &M_{i}\phi \cap M_{i} = \emptyset \quad \text{for every
      non-identity automorphism~$\phi$ of~$U$}.
    \label{eq:Mi-autom}
  \end{align}
\end{itemize}
(Note that Condition~\eqref{eq:Mi-domain} ensures that $M_{i}$~is
disjoint from every translate~$M_{j}\phi$ of a previously defined
subset, with $1 \leq j < i$, under some automorphism of~$U$.)

If $x_{n}$~is already the image of some point in $M_{0} \cup \dots
\cup M_{k}$ under some automorphism of~$U$, then we need create no new
subset~$M_{i}$ at this stage.  Otherwise, consider the set~$\T_{k+1}$
of subsets~$S$ of $U \setminus \set{u\phi}{u \in M_{0} \cup \dots \cup
  M_{k}, \; \phi \in \Aut U}$ such that (i)~$S$~is a convex subset
of~$U$, \ (ii)~$x_{n} \in S$, and (iii)~$S\phi$~is disjoint from~$S$
for every non-identity automorphism~$\phi$ of~$U$.  Again, an
application of Zorn's Lemma provides the existence of a maximal
member~$M_{k}$ in~$\T_{k+1}$.

In this way, we find a family~$(M_{i})$ of convex subsets of~$U$,
indexed by some set~$I$ (where either $I = \N$ or $I = \{ 0,1,\dots,k
\}$ for some~$k$), such that $U$~is the disjoint union of the
sets~$M_{i}\phi$, for $i \in I$ and $\phi \in \Aut U$, and $M_{i}$~is
maximal among convex subsets of~$U$ satisfying
\eqref{eq:Mi-domain}~and~\eqref{eq:Mi-autom} above.  As convex subsets
of the linearly ordered set~$U$, there is an induced order on the
sets~$\set{M_{i}\phi}{i \in I, \; \phi \in \Aut U}$.

\begin{lemma}
  \label{lem:Mi-order}
  Suppose that $M_{i_{1}}\phi_{1} < M_{i_{2}}\phi_{2}$ for some
  $i_{1},i_{2} \in I$ and some $\phi_{1},\phi_{2} \in \Aut U$.  Then
  for each $j \in I$, there exists some $\psi \in \Aut U$ with
  \[
  M_{i_{1}}\phi_{1} < M_{j}\psi < M_{i_{2}}\phi_{2}.
  \]
\end{lemma}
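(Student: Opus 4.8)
The claim is a density statement for the family of translates $\set{M_{i}\phi}{i \in I, \; \phi \in \Aut U}$ that tile~$U$: between any two such blocks (with respect to the induced order) there is a further block, and moreover we have freedom to specify the index~$j$ of the intervening block. The plan is to reduce the statement to the density of the orbit order on~$\Aut U$ already established via Lemma~\ref{lem:Archimedean} and the subsequent discussion, where we observed that (under the density hypothesis on $\oset{\Aut U}$) each orbit $\set{x\phi}{\phi \in \Aut U}$ is order-isomorphic to~$\Qset$.

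**The main idea.**
First I would pick representative points: let $u_{1} \in M_{i_{1}}$ and $u_{2} \in M_{i_{2}}$, so that $u_{1}\phi_{1}$ and $u_{2}\phi_{2}$ lie in the two given blocks and satisfy $u_{1}\phi_{1} < u_{2}\phi_{2}$ (using convexity and Lemma~\ref{lem:orbital}(ii) to locate the images inside their respective blocks). Now fix any $j \in I$ and choose a representative $w \in M_{j}$. The goal is to find $\psi \in \Aut U$ placing the whole block $M_{j}\psi$ strictly between $M_{i_{1}}\phi_{1}$ and $M_{i_{2}}\phi_{2}$; it suffices to position the single point $w\psi$ strictly between $u_{1}\phi_{1}$ and $u_{2}\phi_{2}$, because then convexity of the blocks together with the fact that distinct translates are \emph{disjoint} (by the defining property~\eqref{eq:Mi-autom} and~\eqref{eq:Mi-domain}) forces the entire block $M_{j}\psi$ to sit in the open gap. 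This last implication is the point where I would lean on the fact that two distinct blocks in this tiling are comparable and cannot interleave: if $M_{j}\psi$ met the gap but also stuck out past one endpoint, it would overlap one of the flanking blocks, contradicting disjointness.

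**Producing the automorphism $\psi$.**
To find the required $\psi$, I would apply Lemma~\ref{lem:interleave} directly. That lemma, with $u = w$, $v = w$ (or the appropriate pair of representatives) and a suitable pair $\phi < \psi'$ of automorphisms witnessing the gap, yields $\theta \in \Aut U$ with $u_{1}\phi_{1} < w\theta < u_{2}\phi_{2}$. More carefully, since $u_{1}\phi_{1} < u_{2}\phi_{2}$ both lie in the single orbital~$U$ and orbits are order-isomorphic to~$\Qset$, I can first find automorphisms $\phi, \psi'$ with $u_{1}\phi_{1} = u_{2}\phi$ and $u_{2}\phi_{2} = u_{2}\psi'$ in the orbit of~$u_{2}$, giving $\phi < \psi'$; then Lemma~\ref{lem:interleave} applied to the points $u_{2}$ and $w$ produces $\theta \in \Aut U$ with $u_{2}\phi < w\theta < u_{2}\psi'$, that is, $u_{1}\phi_{1} < w\theta < u_{2}\phi_{2}$. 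Setting $\psi = \theta$ gives the block $M_{j}\psi = M_{j}\theta$ containing the point $w\theta$ strictly inside the gap.

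**The main obstacle.**
The step requiring most care is verifying that the single point estimate $u_{1}\phi_{1} < w\psi < u_{2}\phi_{2}$ genuinely forces the whole block $M_{j}\psi$ into the open interval $(M_{i_{1}}\phi_{1}, M_{i_{2}}\phi_{2})$, rather than merely one of its points. The resolution is that all the sets $M_{i}\phi$ are convex and the family is a disjoint tiling of~$U$: a block containing a point of the gap but extending beyond an endpoint would have to overlap a neighbouring block, which is impossible. I would make this rigorous by noting that $M_{j}\psi$, being one of the tiling pieces, is comparable (in the $<$ sense for disjoint convex sets introduced in Section~\ref{sec:prelims}) to each of $M_{i_{1}}\phi_{1}$ and $M_{i_{2}}\phi_{2}$ unless it equals one of them; since it contains $w\psi$ which lies strictly between points of those two blocks, it can coincide with neither and must satisfy $M_{i_{1}}\phi_{1} < M_{j}\psi < M_{i_{2}}\phi_{2}$, as required.
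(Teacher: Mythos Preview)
Your reduction to Lemma~\ref{lem:interleave} works cleanly only in the case $i_{1} = i_{2}$, and this is in fact how the paper proceeds in that case.  The difficulty you have glossed over arises when $i_{1} \neq i_{2}$.  Your key step is the sentence ``I can first find automorphisms $\phi, \psi'$ with $u_{1}\phi_{1} = u_{2}\phi$ and $u_{2}\phi_{2} = u_{2}\psi'$ in the orbit of~$u_{2}$.''  But $u_{1}\phi_{1} = u_{2}\phi$ would force $u_{1}$ and~$u_{2}$ to lie in the same $\Aut U$-orbit, and when $i_{1} \neq i_{2}$ this is impossible: $u_{1} \in M_{i_{1}}$ implies $u_{1}\phi^{-1} \in M_{i_{1}}\phi^{-1}$, which is disjoint from~$M_{i_{2}}$ by the defining conditions~\eqref{eq:Mi-domain}--\eqref{eq:Mi-autom}.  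So Lemma~\ref{lem:interleave} cannot be invoked directly with endpoints drawn from distinct blocks.

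The paper's proof deals with the case $i_{1} \neq i_{2}$ by a separate argument that uses the \emph{maximality} of the sets~$M_{i}$ in an essential way --- something your proposal never touches.  One first tries to reduce to the equal-index case by finding some~$k$ and two translates $M_{k}\theta_{1} < M_{k}\theta_{2}$ lying between the flanking blocks.  If no such reduction is possible, then at most one translate of each~$M_{k}$ occurs in the gap; letting $m$~be the least index appearing there, the union~$S$ of all blocks in the closed gap is a strictly larger convex set still satisfying \eqref{eq:Mi-domain} and~\eqref{eq:Mi-autom} with respect to~$m$, contradicting the maximality of~$M_{m}$.  This maximality step is the missing idea in your argument; without it, the density claim does not follow.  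Your ``single point to whole block'' paragraph, on the other hand, is correct and is exactly what the paper uses (more tersely) in the $i_{1} = i_{2}$ case.
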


\begin{proof}
  Suppose first that $i_{1} = i_{2}$.  Pick $u \in M_{i_{1}}$ and $v
  \in M_{j}$.  By Lemma~\ref{lem:interleave}, there exists $\psi \in
  \Aut U$ such that $u\phi_{1} < v\psi < u\phi_{2}$.  Hence, as the
  sets concerned are convex, $M_{i_{1}}\phi_{1} < M_{j}\psi <
  M_{i_{1}}\phi_{2} = M_{i_{2}}\phi_{2}$, as required.

  It remains to deal with the case when $i_{1} \neq i_{2}$.  If there
  exists some~$k \in I$ and automorphisms $\theta_{1},\theta_{2} \in
  \Aut U$ with $M_{i_{1}}\phi_{1} \leq M_{k}\theta_{1} <
  M_{k}\theta_{2} \leq M_{i_{2}}\phi_{2}$, then the previous paragraph
  can be applied to $M_{k}\theta_{1} < M_{k}\theta_{2}$ and we would
  have established the required result.  Seeking a contradiction, let
  us assume that no such~$k$, $\theta_{1}$ and~$\theta_{2}$ exist.  As
  a consequence, we conclude that there is no $\theta \in \Aut U$ with
  $M_{i_{1}}\phi_{1} < M_{i_{1}}\theta < M_{i_{2}}\phi_{2}$ or with
  $M_{i_{1}}\phi_{1} < M_{i_{2}}\theta < M_{i_{2}}\phi_{2}$ and that,
  for each $k \in I$, there is at most one $\theta \in \Aut U$ with
  $M_{i_{1}}\phi_{1} < M_{k}\theta < M_{i_{2}}\phi_{2}$.

  Write $K$~for the set of those~$k \in I$ for which there exists
  $\theta_{k} \in \Aut U$ with $M_{i_{1}}\phi_{1} \leq M_{k}\theta_{k}
  \leq M_{i_{2}}\phi_{2}$.  (So, in particular, $i_{1},i_{2} \in K$
  and that $\theta_{i_{m}} = \phi_{m}$ for $m = 1$,~$2$.)  Let $m$~be
  the smallest integer in~$K$.  By applying the inverse
  of~$\theta_{m}$ if necessary, there is no loss of generality in
  assuming that $\theta_{m}$~is the identity map.  Put $S = \bigcup_{k
    \in K} M_{k}\theta_{k}$, so that $M_{m}$~is a proper subset of~$S$
  by our assumption on~$\theta_{m}$.  Since $U$~is the union of all
  translates~$M_{j}\theta$, it follows that every point between
  $M_{i_{1}}\phi_{1}$~and~$M_{i_{2}}\phi_{2}$ lies in
  some~$M_{k}\theta_{k}$ with $k \in K$ and we deduce that the set~$S$
  is convex.  The set~$S$ is also disjoint from all translates
  of~$M_{j}$ for $j < m$, since each set~$M_{k}$ for $k \in K$
  satisfies~\eqref{eq:Mi-domain} above, while $S\psi \cap S =
  \emptyset$ for every non-identity~$\psi \in \Aut U$ since each
  set~$M_{k}$ satisfies~\eqref{eq:Mi-autom}.  We now have a
  contradiction to~$M_{m}$ being a maximal convex subset satisfying
  \eqref{eq:Mi-domain}~and~\eqref{eq:Mi-autom}.  This contradiction
  completes the proof of the lemma.
\end{proof}

The property given in Lemma~\ref{lem:Mi-order} will essentially
characterise the structure of the ordered set~$\oset{U}$.  To describe
this fully, we first introduce a new relational structure.

Let $I$~be a countable set.  We define an \emph{$I$\nbd coloured
  linearly ordered set} to be a relational structure $\Gamma = (V,
\leq, (R_{i})_{i \in I})$ where $\leq$~is a linear order on the
set~$V$ and where each~$R_{i}$ is a binary relation on~$V$ of the
form~$R_{i} = V_{i} \times V_{i}$ such that $V$~is the disjoint union
of the sets~$V_{i}$.  Thus the sequence~$(R_{i})_{i \in I}$ encodes an
equivalence relation on~$V$ with equivalence classes~$V_{i}$, for $i
\in I$, in such a way that any automorphism of~$\Gamma$ fixes each of
the equivalence classes setwise.

The class of finite $I$\nbd coloured linearly ordered sets satisfies
the hereditary property, the joint embedding property and the
amalgamation property and therefore this class possesses a unique
Fra\"{\i}ss\'{e} limit $\Q_{I} = (W, {\leq}, (R_{i})_{i \in I} )$.
Write $W_{i}$~for the equivalence class determined by the
relation~$R_{i}$.  This structure is characterised by the following
property: $(W, {\leq})$~is a countable linearly ordered set without
maximum or minimum elements such that for every pair $u,v \in W$ with
$u < v$ and every $i \in I$ there exists $w \in W_{i}$ with $u < w <
v$.  Indeed, it can be shown by a back-and-forth argument that any two
countable structures satisfying this condition are isomorphic as
$I$\nbd coloured linearly ordered sets (and again such an isomorphism
takes the equivalence class in the first structure indexed by~$i \in
I$ to that in the second indexed by~$i$).  We shall call this
Fra\"{\i}ss\'{e} limit the \emph{$I$\nbd coloured ordered set of
  rational numbers} in view of the fact that $(W,\leq)$~is
order-isomorphic to~$\Qset$.  In view of this order-isomorphism, we
shall rename the set~$W$ as~$\Q$, so that the $I$\nbd coloured
linearly ordered set is denoted $\Q_{I} = (\Q, {\leq}, (R_{i})_{i \in
  I} )$.

\begin{prop}
  The automorphism group of the $I$\nbd coloured ordered set~$\Q_{I}$
  of rational numbers is uncountable.
\end{prop}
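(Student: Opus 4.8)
The plan is to exhibit $2^{\aleph_{0}}$ distinct automorphisms of $\Q_{I}$ by a branching back-and-forth construction, which immediately yields that $\Aut\Q_{I}$ is uncountable. Throughout I shall use the characterising property of the Fra\"{\i}ss\'{e} limit recalled just above: $(\Q,{\leq})$ is a countable dense linear order with no maximum or minimum element, and for every $u < v$ in $\Q$ and every $i \in I$ there is a point of $W_{i}$ strictly between $u$ and $v$. In particular each colour class $W_{i}$ is dense in~$\Q$ (and unbounded above and below, as $\Q$ has no endpoints), so every non-empty open interval of~$\Q$ contains infinitely many points of each colour.

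Call a finite injective map $p$ from a subset of~$\Q$ to~$\Q$ a \emph{partial automorphism} if it is order-preserving and colour-preserving (that is, $x$ and $xp$ lie in the same class $W_{i}$ for each $x$ in $\mathrm{dom}(p)$). The technical heart of the argument is the following extension property. Let $p$ be a partial automorphism and let $a \in \Q$ lie outside $\mathrm{dom}(p)$, with colour~$i$; let $l,u$ be the $\leq$-neighbours of~$a$ inside $\mathrm{dom}(p)$, reading $lp = -\infty$ or $up = +\infty$ when the relevant neighbour is absent. Then any $y \in W_{i}$ with $lp < y < up$ gives a partial automorphism $p \cup \{(a,y)\}$, and by the density property there are infinitely many such~$y$ --- in particular at least two. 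A dual statement extends the range of~$p$ to cover a prescribed point. This is exactly the back-and-forth extension lemma for $\Q_{I}$, and its only input is the density of each colour class.

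Using this I would build partial automorphisms $p_{s}$ indexed by the finite binary strings~$s$, with $p_{s} \subseteq p_{t}$ whenever $s$ is an initial segment of~$t$. Fix an enumeration $\Q = \{a_{0},a_{1},\dots\}$. At each node I interleave three kinds of step: a \emph{forth} step that uses the extension property (with some canonical, say least-indexed, valid choice) to bring the next not-yet-covered~$a_{n}$ into the domain; a \emph{back} step that dually brings the next uncovered point into the range; and a \emph{branching} step where I take a point~$a$ outside the current domain, locate its interval $(lp,up)$, pick two distinct points $y_{0} < y_{1}$ of the colour of~$a$ inside it, and set $p_{s0} = p_{s}\cup\{(a,y_{0})\}$ and $p_{s1} = p_{s}\cup\{(a,y_{1})\}$. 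Each $p_{s}$ remains a partial automorphism, and the back-and-forth steps guarantee that along any branch $\sigma \in \{0,1\}^{\N}$ the union $g_{\sigma} = \bigcup_{n} p_{\sigma\restriction n}$ has domain and range all of~$\Q$; hence $g_{\sigma}$ is a total order- and colour-preserving bijection, i.e.\ an automorphism of~$\Q_{I}$.

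Finally, distinct branches give distinct automorphisms: if $\sigma$ and $\tau$ first differ after a common prefix~$s$, then $g_{\sigma}$ and $g_{\tau}$ extend $p_{s0}$ and $p_{s1}$, which already disagree at the branching point~$a$; since partial automorphisms are only ever extended, never redefined, this disagreement persists and $g_{\sigma} \neq g_{\tau}$. Thus $\sigma \mapsto g_{\sigma}$ injects $\{0,1\}^{\N}$ into $\Aut\Q_{I}$, so $\card{\Aut\Q_{I}} \geq 2^{\aleph_{0}}$ and $\Aut\Q_{I}$ is uncountable. The step needing the most care is the bookkeeping that simultaneously secures totality and surjectivity along every branch while still branching at every node; by contrast, the permanence of each branching disagreement is automatic because extensions never alter previously assigned values, and the availability of two colour-correct choices at every branching step is guaranteed directly by the density of the colour classes.
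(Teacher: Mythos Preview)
Your proof is correct and takes a genuinely different route from the paper's. The paper argues by self-similarity: it first notes (via a single back-and-forth) that $\Aut\Q_{I}$ is non-trivial, then constructs the $I$\nbd coloured order $\Z\times\Q_{I}$ (countably many copies of~$\Q_{I}$ laid end to end, with $i$th class $\Z\times W_{i}$), checks that this again satisfies the characterising property so that $\Z\times\Q_{I}\cong\Q_{I}$, and observes that any sequence $(f_{n})_{n\in\Z}$ in $\Aut\Q_{I}$ acts coordinatewise on $\Z\times\Q_{I}$, giving an embedding of $\prod_{n\in\Z}\Aut\Q_{I}$ into $\Aut\Q_{I}$ and hence uncountability. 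Your branching back-and-forth instead manufactures $2^{\aleph_{0}}$ automorphisms directly, without the auxiliary structure. The paper's argument is shorter once the isomorphism $\Z\times\Q_{I}\cong\Q_{I}$ is in hand and cleanly separates the existence of a single non-trivial automorphism from the cardinality amplification; your argument is more self-contained and is the standard template for showing Fra\"{\i}ss\'{e} limits have $2^{\aleph_{0}}$ automorphisms. Both yield the same lower bound $2^{\aleph_{0}}$.
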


\begin{proof}
  Note that $\Aut \Q_{I}$~is non-trivial since given any $i \in I$ and
  two points $x,y \in W_{i}$, a back-and-forth argument establishes
  the existence of an order-isomorphism that preserves the equivalence
  classes~$W_{i}$ and maps $x$ to~$y$.  The following argument extends
  this to show in fact there are uncountably many automorphisms
  of~$\Q_{I}$.
  
  We shall write $\Z \times \Q_{I}$~for the $I$\nbd coloured linearly
  ordered set defined as follows: as an ordered set it is the set~$\Z
  \times \Q$ equipped with the lexicographic order; that is, $(m,x)
  \leq (n,y)$ if and only if $m < n$, or $m = n$ and $x \leq y$.  To
  colour~$\Z \times \Q_{I}$, for each $i \in I$, the $i$th equivalence
  class is~$\Z \times W_{i}$ where $W_{i}$~is the $i$th equivalence
  class in~$\Q_{I}$.  In effect, with $\Z \times \Q_{I}$, we are
  taking countably many copies of~$\Q_{I}$, placing them in sequence
  in terms of the order, and then taking the $i$th equivalence classes
  in each copy of~$\Q_{I}$ together to form a single equivalence class
  in~$\Z \times \Q_{I}$.

  One observes that $\Z \times \Q_{I}$~is a countable linearly ordered
  set with no maximum or minimum element and that it has the property
  that for each $u,v \in \Z$ with $u < v$ and all $i \in I$, there
  exists $w \in \Z \times W_{i}$ with $u < w < v$.  Thus, $\Z \times
  \Q_{I}$~satisfies the defining property of~$\Q_{I}$ so that $\Z
  \times \Q_{I} \cong \Q_{I}$ as $I$\nbd coloured linearly ordered
  sets.

  If $\mathbf{f} = (f_{n})$ is a sequence of automorphisms of the
  structure~$\Q_{I}$, we can define $\hat{\mathbf{f}} \in \Aut ( \Z
  \times \Q_{I} )$ by $(n,x)\hat{\mathbf{f}} = (n,xf_{n})$ for each $n
  \in \Z$ and $x \in \Q$.  This defines an injective map $\mathbf{f}
  \mapsto \hat{\mathbf{f}}$ from the Cartesian product
  $\prod_{n=1}^{\infty} \Aut \Q_{I}$ to $\Aut(\Z \times \Q_{I})$.  It
  now follows that $\Aut \Q_{I} \cong \Aut ( \Z \times \Q_{I} )$ is
  indeed uncountable.
\end{proof}

We now return to the automorphism group of the orbital $U = U_{f}(x)$
under our current assumption that $\Aut U$~is order-isomorphic to some
dense linearly ordered set.  Recall that we have defined a sequence
$\mathbf{M} = (M_{i})_{i \in I}$ of convex subsets of~$U$ indexed
by~$I$.  We shall use the $I$\nbd coloured ordered set~$\Q_{I}$ of
rational numbers, where $I$~is the set indexing our convex subsets.
Recall that the equivalence classes on~$\Q$ associated to this
relational structure are denoted~$(W_{i})_{i \in I}$.  Now write
$\Q_{I}(\mathbf{M})$ for the ordered set~$\oset{S}$ where $S =
\bigcup_{i \in I} (W_{i} \times M_{i})$ and the order~$\leq$ is the
lexicographic order (that is, $(x,m) \leq (y,n)$ if and only if $x <
y$, or $x = y$ and $m \leq n$).  Now if $\phi \in \Aut \Q_{I}$ (a
colour- and order-preserving bijection of this structure), we can
define an automorphism~$\tilde{\phi}$ of~$\Q_{I}(\mathbf{M})$ by
$(x,m)\tilde{\phi} = (x\phi,m)$ for $(x,m) \in S$.  Note that we rely
upon the fact that $\phi$~preserves the $i$th equivalence
class~$W_{i}$ when observing that $\tilde{\phi}$~is indeed a
well-defined map.  The following now follows from the fact that $\Aut
\Q_{I}$~is uncountable.

\begin{cor}
  \label{cor:AutQM}
  The automorphism group of the ordered set $\Q_{I}(\mathbf{M}) =
  \oset{S}$ is uncountable. \qed
\end{cor}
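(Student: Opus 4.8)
The plan is to leverage the correspondence $\phi \mapsto \tilde{\phi}$ that has just been constructed, which assigns to each automorphism $\phi$ of the $I$\nbd coloured rational ordered set $\Q_{I}$ the automorphism $\tilde{\phi}$ of $\Q_{I}(\mathbf{M}) = \oset{S}$ defined by $(x,m)\tilde{\phi} = (x\phi,m)$. Since the preceding proposition guarantees that $\Aut\Q_{I}$ is uncountable, the entire task reduces to checking that this assignment is injective: an injection from an uncountable set into $\Aut\Q_{I}(\mathbf{M})$ forces the latter to be uncountable as well, so the conclusion would follow immediately.

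To verify injectivity I would start from two distinct members $\phi,\psi \in \Aut\Q_{I}$, so that some $x \in \Q$ satisfies $x\phi \neq x\psi$. This $x$ lies in precisely one colour class $W_{i}$, and the corresponding convex set $M_{i}$ was arranged to be non-empty in the construction of the family $(M_{i})_{i \in I}$ (each $M_{i}$ is built to contain a prescribed point of~$U$). Choosing any $m \in M_{i}$ yields a point $(x,m) \in S$, and then $(x,m)\tilde{\phi} = (x\phi,m)$ differs from $(x,m)\tilde{\psi} = (x\psi,m)$ in its first coordinate; hence $\tilde{\phi} \neq \tilde{\psi}$, as required.

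There is essentially no obstacle here beyond bookkeeping. The only points to keep in mind are that the witness $m$ must be drawn from a non-empty $M_{i}$, which is ensured by the construction, and that the pairs genuinely lie in~$S$ because $\phi$ preserves each colour class $W_{i}$ (a fact already noted when $\tilde{\phi}$ was shown to be a well-defined automorphism). With injectivity in hand the corollary is immediate, matching the inline $\square$ that terminates its statement.
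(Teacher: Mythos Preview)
Your proposal is correct and is exactly the argument the paper intends: it records that the assignment $\phi \mapsto \tilde{\phi}$ from $\Aut\Q_{I}$ into $\Aut\Q_{I}(\mathbf{M})$ is injective and then invokes the preceding proposition that $\Aut\Q_{I}$ is uncountable. You have simply spelled out the injectivity check that the paper leaves implicit in its one-line remark before the \qed.
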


Let us now consider the set $\mathcal{M} = \set{M_{i}\phi}{i \in I, \;
  \phi \in \Aut U}$ of all translates of the sets~$M_{i}$ under the
action of the automorphism group of~$U$ and view this as an ordered
set using the order induced on these convex subsets from the order
on~$U$.  We shall also define a relation~$R_{i}$ to be that relating
the points~$M_{i}\phi$ for $\phi \in \Aut U$, so that $W_{i} =
\set{M_{i}\phi}{\phi \in \Aut U}$ is the corresponding subset
of~$\mathcal{M}$ indexed by~$i$.  Since $\Aut U$~has no maximum or
minimum element, the same is true of~$\mathcal{M}$ and now
Lemma~\ref{lem:Mi-order} tells us that $(\mathcal{M},{\leq},(R_{i})_{i
  \in I})$ satisfies the defining property of the $I$\nbd coloured
ordered set~$\Q_{I}$ of rational numbers.  Thus these structures are
isomorphic as $I$\nbd coloured ordered sets.  Returning to our
set~$U$, we now observe that we can reconstruct this set
from~$\mathcal{M}$ by replacing each point~$M_{i}\phi$ by a copy of
the ordered set~$M_{i}$.  This tells us that $\oset{U}$~is
order-isomorphic to~$\Q_{I}(\mathbf{M})$.  This now gives us the
contradiction that we seek: Corollary~\ref{cor:AutQM} tells us that
$\Aut U$~is uncountable, which is contrary to our running assumption.

In conclusion, we have now established our final step towards the main
result of this section.

\begin{prop}
  \label{prop:AutU}
  Let $f \in \Aut \Omega$, \ $x \in V$ and suppose $U = U_{f}(x)$ is
  infinite.  Then $\Aut U$~is an infinite cyclic group. \qed
\end{prop}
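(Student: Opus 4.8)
The plan is to derive the proposition by marrying the structure theory of Archimedean groups to the uncountability obstruction assembled in the preceding discussion. First I would invoke Lemma~\ref{lem:Archimedean}(ii), which establishes that $\Aut U$ is an Archimedean group under the order in which $\phi \leq \psi$ whenever $x\phi \leq x\psi$. By H\"{o}lder's theorem~\cite{Holder} (in the form \cite[Theorem~4.A]{Glass}), every Archimedean group is order-isomorphic to an additive subgroup of~$(\R,+)$, and the dichotomy recorded in \cite[Lemma~4.21]{Goodearl} tells us that any such subgroup is either infinite cyclic or a dense subset of~$\R$. Thus the whole problem reduces to excluding the dense alternative, which is precisely the running hypothesis under which all of Lemmas~\ref{lem:interleave}--\ref{lem:Mi-order} were proved.

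Second, assuming for contradiction that $\oset{\Aut U}$ is dense, I would exploit the interleaving rigidity of Lemma~\ref{lem:interleave} to decompose~$U$ into ``colour classes''. Using Zorn's Lemma I would carve out a maximal family $\mathbf{M} = (M_{i})_{i \in I}$ of convex subsets of~$U$ whose $\Aut U$-translates $M_{i}\phi$ are pairwise disjoint and together exhaust~$U$, exactly as constructed in the paragraphs preceding the statement. Lemma~\ref{lem:Mi-order} then supplies the crucial back-and-forth density: the ordered set $\mathcal{M} = \set{M_{i}\phi}{i \in I,\ \phi \in \Aut U}$ of translates, equipped with the relations~$R_{i}$ that group together the translates of a fixed~$M_{i}$, satisfies the defining property of the $I$\nbd coloured ordered set~$\Q_{I}$ of rationals. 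Hence $\mathcal{M} \cong \Q_{I}$ as $I$\nbd coloured linearly ordered sets.

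Third, I would reconstruct~$U$ from~$\mathcal{M}$ by replacing each point~$M_{i}\phi$ by a copy of the ordered set~$M_{i}$, thereby identifying $\oset{U}$ with $\Q_{I}(\mathbf{M})$. Corollary~\ref{cor:AutQM} shows that $\Aut \Q_{I}(\mathbf{M})$ is uncountable, so $\Aut U$ is uncountable. Since $U$ is convex, extending automorphisms of~$\oset{U}$ by the identity on~$V \setminus U$ embeds $\Aut U$ as a subgroup of~$\Aut\Omega$, and this contradicts the standing assumption that $\Aut\Omega$ is countable. Therefore $\Aut U$ cannot be dense, and by the dichotomy it is cyclic. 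Finally, since $U = U_{f}(x)$ is infinite we have $xf \neq x$, so $f$ restricts to a non-identity element of~$\Aut U$ (Lemma~\ref{lem:orbital}(i)--(ii)); hence $\Aut U$ is a non-trivial cyclic subgroup of~$\R$, that is, infinite cyclic.

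The main obstacle is the second step: arranging the family~$(M_{i})$ so that its translates simultaneously tile~$U$ and assemble into a structure isomorphic to~$\Q_{I}$. The delicate points are the maximality argument (via Zorn's Lemma) guaranteeing that the tiling is exhaustive, and the verification in Lemma~\ref{lem:Mi-order} that the density of~$\oset{\Aut U}$ forces the coloured-density axiom characterising~$\Q_{I}$ --- this is exactly the place where the dense alternative is consumed and converted into the uncountably many automorphisms provided by Corollary~\ref{cor:AutQM}.
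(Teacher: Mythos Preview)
Your proposal is correct and follows essentially the same approach as the paper: establish that $\Aut U$ is Archimedean via Lemma~\ref{lem:Archimedean}, apply H\"older's theorem and the cyclic/dense dichotomy, and under the dense assumption build the family~$(M_{i})$ by Zorn's Lemma, identify the coloured ordered set of translates with~$\Q_{I}$ through Lemma~\ref{lem:Mi-order}, reconstruct $\oset{U} \cong \Q_{I}(\mathbf{M})$, and obtain the uncountability contradiction from Corollary~\ref{cor:AutQM}. The only minor imprecision is that Lemma~\ref{lem:interleave} is not itself used to carve out the~$M_{i}$ (that is pure Zorn's Lemma on the conditions~\eqref{eq:Mi-domain}--\eqref{eq:Mi-autom}); rather, it is the ingredient feeding into the proof of Lemma~\ref{lem:Mi-order}, which is where the density assumption is actually consumed.
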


Putting all our work together, we can now establish
Theorem~\ref{thm:countableautoms}.

\begin{proofof}{Theorem~\ref{thm:countableautoms}}
  Consider the set~$\set{U_{i}}{i \in I}$ of all subsets of~$V$ that
  arise as an infinite orbital of some automorphism of~$\Omega$.
  Proposition~\ref{prop:equalorb} tells us that these sets~$U_{i}$ are
  pairwise disjoint.  Moreover, if $f_{i} \in \Aut U_{i}$ for each $i
  \in I$, then there is an extension~$f$ to an automorphism
  of~$\Omega$ by
  \[
  vf = \begin{cases}
    vf_{i} &\text{if $v \in U_{i}$ for some $i \in I$,} \\
    v &\text{otherwise}.
  \end{cases}
  \]
  Since any automorphism of~$\Omega$ must fix all points in $V
  \setminus \left( \bigcup_{i \in I} U_{i} \right)$, we conclude that
  $\Aut\Omega$~is isomorphic to the Cartesian product of the
  automorphism groups of the~$U_{i}$.  The countability
  of~$\Aut\Omega$ combined with Proposition~\ref{prop:AutU} tells us
  that $I$~is finite and that $\Aut\Omega \cong \Z^{\card{I}}$.  This
  completes the proof of our theorem.
\end{proofof}

\paragraph{Funding:} This work was supported by the Engineering and
Physical Sciences Research Council (EPSRC) [Doctoral Training Grant to
  J.D.McPhee, EP/H011978/1 to M.Quick.].

\paragraph{Acknowledgements:} The authors thank Igor Dolinka for
drawing their attention to the results of Kubi\'{s} in~\cite{Kubis}.
We also thank the anonymous referee for their careful reading of the
paper and suggestions.

\end{document}